\newcommand*\linenomathpatch[1]{%
  \cspreto{#1}{\linenomath}%
  \cspreto{#1*}{\linenomath}%
  \csappto{end#1}{\endlinenomath}%
  \csappto{end#1*}{\endlinenomath}%
}
\newcommand*\linenomathpatchAMS[1]{%
  \cspreto{#1}{\linenomathAMS}%
  \cspreto{#1*}{\linenomathAMS}%
  \csappto{end#1}{\endlinenomath}%
  \csappto{end#1*}{\endlinenomath}%
}
 \let\linenomathAMS\linenomathWithnumbers
 \patchcmd\linenomathAMS{\advance\postdisplaypenalty\linenopenalty}{}{}{}
  \let\linenomathAMS\linenomathNonumbers
\patchcmd{\mmeasure@}{\measuring@true}{
  \measuring@true
  \ifnum-\linenopenaltypar>\interdisplaylinepenalty
    \advance\interdisplaylinepenalty-\linenopenalty
  \fi
  }{}{}
\newtheorem{theoremcounter}{Theorem Counter}[section]
\theoremstyle{definition}
\newtheorem{definition}[theoremcounter]{Definition}
\newtheorem{remark}[theoremcounter]{Remark}
\theoremstyle{plain}
\newtheorem{lemma}[theoremcounter]{Lemma}
\newtheorem{proposition}[theoremcounter]{Proposition}
\newtheorem{corollary}[theoremcounter]{Corollary}
\newtheorem{theorem}[theoremcounter]{Theorem}
\numberwithin{equation}{section}
\newtheorem{thmx}{Theorem} 
\newcommand{\Z}{\mathbb{Z}}
\newcommand{\Q}{\mathbb{Q}}
\newcommand{\R}{\mathbb{R}}
\newcommand{\C}{\mathbb{C}}
\newcommand{\bbH}{\mathbb{H}}
\DeclareMathOperator{\ImNew}{Im}
\renewcommand{\Im}{\ImNew}
\DeclareMathOperator{\ReNew}{Re}
\renewcommand{\Re}{\ReNew}
\DeclareMathOperator{\SL}{SL}
\DeclareMathOperator{\PSL}{PSL}
\DeclareMathOperator{\sgn}{sgn}
\DeclareMathOperator{\Res}{Res}
\DeclareMathOperator{\tr}{tr}
\DeclareMathOperator{\vol}{vol}
\newcommand{\pmat}[1]{\begin{pmatrix}#1\end{pmatrix}}
\newcommand{\smat}[1]{\bigl(\begin{smallmatrix}#1\end{smallmatrix}\bigr)} 
\newcommand{\pmx}[1]{\begin{pmatrix}#1\end{pmatrix}}
\newcommand{\spmx}[1]{{\small \pmx{#1}}}
\newcommand{\ol}{\overline}
\newcommand{\wt}[1]{{\widetilde{#1}}}
\newcommand{\ds}{\displaystyle}
\newcommand{\surj}{\twoheadrightarrow}
\newcommand{\congto}{\overset{\cong}{\to}}
\title[The Rademacher symbols for triangle groups] 
{Modular knots, automorphic forms, and the Rademacher symbols for triangle groups}
\author{Toshiki Matsusaka}
\email{matsusaka@math.kyushu-u.ac.jp} 
\address{Faculty of Mathematics, Kyushu University\\ 
744 Motooka, Nishi-ku, Fukuoka-shi, 819-0395, Fukuoka, Japan
}
\author{Jun Ueki} 
\email{uekijun46@gmail.com}
\address{Department of Mathematics, Faculty of Science, Ochanomizu University\\ 
2-1-1 Otsuka, Bunkyo-ku, 112-8610, Tokyo, Japan} 
\subjclass[2020]{Primary 11F37, 57M10; Secondary 11F20, 57K10} 
\keywords{modular knot, triangle group, Rademacher symbol, harmonic Maass form, bounded Euler cocycle 
}
\begin{document}

\begin{abstract}
\'{E}.\,Ghys proved that the linking numbers of modular knots and the ``missing'' trefoil $K_{2,3}$ in $S^3$ coincide with the values of a highly ubiquitous function called the Rademacher symbol for $\SL_2\Z$. 
In this article, we replace $\SL_2\Z=\Gamma_{2,3}$ by the triangle group $\Gamma_{p,q}$ for any coprime pair $(p,q)$ of integers with $2\leq p<q$. 
We invoke the theory of harmonic Maass forms for $\Gamma_{p,q}$ to introduce the notion of the Rademacher symbol $\psi_{p,q}$, and provide several characterizations. Among other things, we generalize Ghys's theorem for modular knots around any ``missing'' torus knot $K_{p,q}$ in $S^3$ and in a lens space. 
\end{abstract}

\maketitle

\makeatletter 
\renewcommand{\l@subsubsection}[2]{\@dottedtocline{2}{3.8em}{3.2em}{#1 \hspace{30em}}{}}
\makeatother

\setcounter{tocdepth}{3}
{\small 
\tableofcontents } 


\section{Introduction}


In a celebrated paper ``\emph{Knots and dynamics}" \cite{Ghys2007ICM}, \'{E}tienne Ghys proved a highly interesting result connecting two objects, one coming from low-dimensional topology and the other coming from automorphic forms for $\SL_2\Z$. 
Namely, he proved that \emph{the linking number of a modular knot with the ``missing'' trefoil $K_{2,3}$ coincides with the value of the Rademacher symbol}. 

Now let $(p,q)$ be any coprime pair of integers with $2\leq p<q$ and put $r=pq-p-q$. 
In this article, we invoke the theory of harmonic Maass forms 
to introduce the notion of the Rademacher symbol $\psi_{p,q}$ for the triangle group $\Gamma_{p,q}=\Gamma(p,q,\infty)<\SL_2\R$. 
Then we extend Ghys's theorem to modular knots around the general torus knot $K_{p,q}$ in $S^3$ and its image in the lens space $L(r,p-1)$. 
We also provide several characterizations of the symbol and discuss its several variants. 

In order to make a concrete description, 
let us briefly recollect the case for $\Gamma_{2,3}=\SL_2\Z$. 
The exterior of the trefoil $K_{2,3}$ in $S^3$ is homeomorphic to the set $\{(z,w)\in \C^2 \mid z^3-w^2\neq 0, |z|^2+|w|^2=1\}$, 
the unit tangent bundle $T_1{\rm PSL}_2\Z \backslash \bbH$ of the modular orbifold, 
and the homogeneous space $\SL_2\Z \backslash  \SL_2\R$. 
\emph{The} so-called \emph{geodesic flow} on $S^3-K_{2,3}\cong \SL_2\Z \backslash  \SL_2\R$ is defined by $\varphi^t:M\mapsto M\smat{e^t&0\\0&e^{-t}}$, $t\in \R$ and its closed orbits are called \emph{modular knots}. Each modular knot corresponds to the conjugacy class of primitive hyperbolic elements of $\SL_2\Z$; 
Let $\gamma=\smat{a&b\\c&d}\in \SL_2\Z$ with $a+d>2$ and $c>0$ so that we have $M_\gamma^{-1}\gamma M_\gamma=\smat{\xi_\gamma&0\\0&\xi_\gamma^{-1}}$ with $\xi_\gamma>1$ for some $M_\gamma \in \SL_2\R$. 
Then the corresponding modular knot $C_\gamma$ is explicitly defined by the curve 
\[C_\gamma(t)=M_\gamma \spmx{e^t&0\\0&e^{-t}},\ \ 0\leq t\leq \log \xi_\gamma.\]  
Note in addition that the unique cusp orbit of the unit tangent bundle  $T_1{\rm PSL}_2\Z \backslash \bbH$ is the missing trefoil $K_{2,3}$ in $S^3$. 

The discriminant function $\Delta_{2,3}(z)=q\prod_{n=1}^{\infty} (1-q^n)^{24}$ with $q=e^{2\pi iz}$ and $z\in \bbH$ is a well known modular form of weight 12. 
The Rademacher symbol $\psi_{2,3}: \SL_2\Z\to \Z$ is defined as the function satisfying the transformation law 
\[\log \Delta_{2,3}(\gamma z) - \log \Delta_{2,3}(z) = 12 \log (cz+d) + 2\pi i \psi_{2,3}(\gamma)\]
for every $\gamma=\smat{a&b\\c&d}\in  \SL_2\Z$ and $z\in \bbH$. 
Here, we take branches of the logarithms so that we have ${\rm Im}\log (cz+d)\in [-\pi, \pi)$ and $\log \Delta_{2,3}(z) = 2\pi iz - 24 \sum_{n=1}^\infty \sum_{d|n} d^{-1} q^n$.  

In \cite[Sections 3.2--3.3]{Ghys2007ICM}, Ghys precisely asserts the following; 
\emph{Let $\gamma = \smat{a&b\\c&d} \in \SL_2 \Z$ be a primitive element with $\tr \gamma  > 2$ and $c > 0$. 
Then the linking number is given by}
\[\mathrm{lk} (C_\gamma, K_{2,3}) = \psi_{2,3}(\gamma).\] 
%

The original Rademacher symbol $\Psi_{2,3}:\SL_2\Z\to \Z$ is a class-invariant function initially introduced by Rademacher in his study of Dedekind sums (\cite{Rademacher1956}, see also \cite{RademacherGrosswald1972}). 
This function is highly ubiquitous, as Atiyah \cite[Theorem 5.60]{Atiyah1987MA} proved the (partial) equivalences of seven very distinct definitions and Ghys went further. 
Ghys indeed worked in his 1st proof with the slightly modified function $\psi_{2,3}:\SL_2\Z\to \Z$, 
which is not a class-invariant function but coincides with the original symbol $\Psi_{2,3}$ at every $\gamma \in \SL_2\Z$ with $\tr \gamma>0$. 
In this paper, we proactively take advantage of Ghys's 1st treatment and afterwards discuss the original symbol. 

Ghys's theorem for $(2,3)$ generalizes to any $(p,q)$ in two directions; 
Let $\wt{\SL_2}\R$ denote the universal covering group of $\SL_2\R$ and $\wt{\Gamma}_{p,q}< \wt{\SL_2}\R$ the inverse image of $\Gamma_{p,q}$. 
Let $G_r$ denote the kernel of a surjective homomorphism $\wt{\Gamma}_{p,q} \surj \Z/r\Z$, which is unique up to multiplication by units in $\Z/r\Z$. Then we have the following. 
\begin{itemize} 
\item \cite{Tsanov2013EM} The spaces $\Gamma_{p,q}\backslash \SL_2\R\cong \wt{\Gamma}_{p,q} \backslash \wt{\SL_2}\R$ are homeomorphic to the exterior of a knot $\ol{K}_{p,q}$ in the lens space $L(r,p-1)$, where $\ol{K}_{p,q}$ is the image of a $(p,q)$-torus knot $K_{p,q}$ via the $\Z/r\Z$-cover $h:S^3\surj L(r,p-1)$. 
\item \cite{RaymondVasquez1981}
The space $G_r\backslash \wt{\SL_2}\R$ is homeomorphic to the exterior of the torus knot $K_{p,q}$ in $S^3$. 
\end{itemize} 
Modular knots in $L(r,p-1)-\ol{K}_{p,q}\cong \Gamma_{p,q}\backslash \SL_2\R$ are defined in a similar manner to the case for $\SL_2\Z$, so that each of them corresponds to a conjugacy class of primitive hyperbolic elements of $\Gamma_{p,q}$. 
In order to define the Rademacher symbol $\psi_{p,q}$ for the triangle group $\Gamma_{p,q}$, 
we invoke the theory of \emph{harmonic Maass forms for} Fuchsian groups; 
we construct a harmonic Maass form $E_2^{(p,q),\ast}(z)$ and a mock modular form $E_2^{(p,q)}(z)$ of weight 2 
to define a suitable holomorphic cusp form $\Delta_{p,q}(z)$ of weight $2pq$, that is, $\Delta_{p,q}(z)$ has no poles and zeros on $\bbH$ and has a unique zero of order $r$ at the cusp $i\infty$. 
The Rademacher symbol $\psi_{p,q}:\Gamma_{p,q}\to \Z$ is then defined as a unique function satisfying the transformation law 
\[\log \Delta_{p,q}(\gamma z) - \log \Delta_{p,q}(z) = 2pq \log (cz+d) + 2\pi i \psi_{p,q}(\gamma)\]
for every $\gamma=\smat{a&b\\c&d}\in \Gamma_{p,q}$ and $z\in \bbH$ 
under suitable choices of branches of the logarithms.  
Among other things, we prove the following assertion in Section 4. 
\begin{thmx} 
\label{our-main-theorem1} 
{\rm (I)} Let the notation be as above. 
	Let $\gamma = \smat{a & b \\ c & d} \in \Gamma_{p,q}$ be a primitive element with $\tr \gamma  > 2$ and  
	$c > 0$. Then the linking number of the modular knot $C_\gamma$ and the image of the missing torus knot $\ol{K}_{p,q}$ in $L(r,p-1)$ is given by the Rademacher symbol $\psi_{p,q}(\gamma)$ as
\[ \mathrm{lk} (C_\gamma, \ol{K}_{p,q}) = \dfrac{\,1\,}{r}\psi_{p,q} (\gamma) \ \ \in \dfrac{\,1\,}{r}\Z.\] 

{\rm (II)} In addition, let $C_\gamma'$ be a connected component of the preimage $h^{-1}(C_\gamma)$ via the $\Z/r\Z$-cover $h:S^3-K_{p,q}\surj L(r,p-1)-\ol{K}_{p,q}$. Then the linking number is given by
\[ {\rm lk}(C_\gamma',K_{p,q})=\dfrac{1}{{\rm gcd}(r,\psi_{p,q}(\gamma))}\psi_{p,q}(\gamma) \in \Z.\] 
\end{thmx}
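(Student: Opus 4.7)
The plan is to mimic Ghys's original strategy, using the cusp form $\Delta_{p,q}$ to build a rational Seifert surface for $\overline{K}_{p,q}$ in $L(r,p-1)$ and reading off the intersection number with the modular knot from the defining transformation law of $\psi_{p,q}$. The starting point is the lifted function $\widetilde{\Delta}_{p,q}(g):=\Delta_{p,q}(g\cdot i)\,j(g,i)^{-2pq}$ on $\SL_2\R$, where $j(g,z)=cz+d$. By the cocycle property of $j$ together with the transformation law, $\widetilde{\Delta}_{p,q}$ is left $\Gamma_{p,q}$-invariant and obeys $\widetilde{\Delta}_{p,q}(gk_\theta)=e^{-2pq\,i\theta}\widetilde{\Delta}_{p,q}(g)$ for $k_\theta\in SO(2)$, so its argument descends to an $\R/2\pi\Z$-valued function on $\Gamma_{p,q}\backslash \SL_2\R$ that winds $-2pq$ times along each circle fibre.

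\textbf{Part (I).} I would take $\Sigma:=\{\arg\widetilde{\Delta}_{p,q}=0\}$ as a rational Seifert surface. Since the cuspidal expansion is $\Delta_{p,q}(z)=c_rq^r+O(q^{r+1})$, the argument winds exactly $r$ times around any small loop around the cusp, so $\Sigma$ accumulates on $\overline{K}_{p,q}$ with multiplicity $r$, giving $\partial\Sigma=r\overline{K}_{p,q}$ and $\mathrm{lk}(C_\gamma,\overline{K}_{p,q})=\tfrac{1}{r}(C_\gamma\cdot\Sigma)$. To evaluate $(C_\gamma\cdot\Sigma)$ I lift the modular knot to the path $\widetilde{C}_\gamma(t):=M_\gamma\spmx{e^t&0\\0&e^{-t}}$, $t\in[0,\log\xi_\gamma]$, pick a continuous branch of $\log\widetilde{\Delta}_{p,q}$ along it, and count signed transverse crossings of $\Sigma$ as $\tfrac{1}{2\pi}$ times the total change of $\arg\widetilde{\Delta}_{p,q}$. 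Using $\log\widetilde{\Delta}_{p,q}(g)=\log\Delta_{p,q}(g\cdot i)-2pq\log j(g,i)$, the cocycle identity for $\log j$, and the defining transformation law at $z_0=M_\gamma\cdot i$, this change telescopes to $2\pi i\,\psi_{p,q}(\gamma)$, so $(C_\gamma\cdot\Sigma)=\psi_{p,q}(\gamma)$. The hardest step here is branch-matching: the transformation law is normalised with the principal branch $\Im\log(cz+d)\in[-\pi,\pi)$, which need not coincide with the continuous branch chosen along the geodesic $\widetilde{C}_\gamma$. The hypotheses $\tr\gamma>2$ and $c>0$ have to be used to control $\arg(cz_0+d)$ and prevent spurious $2\pi$-jumps, just as in Ghys's original $(2,3)$-argument.

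\textbf{Part (II).} Part (I) identifies the image of $[C_\gamma]$ under $\pi_1(L(r,p-1)-\overline{K}_{p,q})\twoheadrightarrow\Z/r\Z$ with $\psi_{p,q}(\gamma)\bmod r$, since the cover $h$ is classified (up to units) by the rational-linking-number-mod-$\Z$ homomorphism. Standard covering-space theory then yields that $h^{-1}(C_\gamma)$ has $d:=\gcd(r,\psi_{p,q}(\gamma))$ components, each a cyclic $k$-fold cover of $C_\gamma$ with $k=r/d$. Lifting $\Sigma$ gives a chain $\Sigma':=h^{-1}(\Sigma)\subset S^3$ with $\partial\Sigma'=rK_{p,q}$, so any genuine Seifert surface $F$ for $K_{p,q}$ in $S^3$ satisfies $[\Sigma']=r[F]$ rel boundary. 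Counting preimages at each transverse intersection $y\in C_\gamma\cap\Sigma$ (precisely $k$ of the $r$ points of $h^{-1}(y)$ lie on $C_\gamma'$) gives $\#(C_\gamma'\cap\Sigma')=k\,\psi_{p,q}(\gamma)$, and hence
\[\mathrm{lk}(C_\gamma',K_{p,q})=\#(C_\gamma'\cdot F)=\tfrac{1}{r}\#(C_\gamma'\cdot\Sigma')=\tfrac{k}{r}\psi_{p,q}(\gamma)=\tfrac{\psi_{p,q}(\gamma)}{\gcd(r,\psi_{p,q}(\gamma))}.\]
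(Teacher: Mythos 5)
Your proof is correct and rests on the same analytic engine as the paper --- the $\Gamma_{p,q}$-invariant lift $\widetilde{\Delta}_{p,q}$ and the fact that its argument changes by $2\pi\psi_{p,q}(\gamma)$ along $C_\gamma$ --- but the topological packaging is genuinely different. The paper argues purely homologically: Proposition \ref{prop.tildeDelta} shows that $(\widetilde{\Delta}_{p,q})_*\colon H_1(\Gamma_{p,q}\backslash\SL_2\R;\Z)\to H_1(\C^\times;\Z)$ is an isomorphism (this needs a class of winding number $1$, supplied by Lemma \ref{lem.psi=1}, which constructs $\gamma=U_q\!^{-x}S_p\!^{-y}$ with $\psi_{p,q}(\gamma)=1$), and then reads off $\mathrm{lk}(K,\overline{K}_{p,q})=\frac{1}{r}\,\mathrm{ind}(\widetilde{\Delta}_{p,q}(K),0)$ by comparing with the meridian. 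Your level-set surface $\Sigma=\{\arg\widetilde{\Delta}_{p,q}=0\}$ bypasses that surjectivity lemma entirely; note that $\arg\widetilde{\Delta}_{p,q}$ is automatically a submersion because its derivative in the fibre direction is the nonzero constant $-2pq$, so every level set really is a surface. For Part (II) your preimage-counting on $h^{-1}(\Sigma)$ reproduces exactly the content of Lemmas \ref{lem.covering} and \ref{lem.link-lens-link-S3} and Proposition \ref{prop.m_gamma}: both arguments amount to $\mathrm{lk}(C_\gamma',K_{p,q})=m_\gamma\,\mathrm{lk}(C_\gamma,\overline{K}_{p,q})$ with $m_\gamma=r/\gcd(r,\psi_{p,q}(\gamma))$, and your appeal to the classification of the cyclic cover by the linking-number homomorphism is legitimate because $\widetilde{\Gamma}_{p,q}^{\mathrm{ab}}\cong\Z$ has a unique index-$r$ subgroup. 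The one step you assert rather than prove is precisely where the paper does its real work: the claim that the continuous change of $\log\widetilde{\Delta}_{p,q}$ along the lift telescopes to $2\pi i\,\psi_{p,q}(\gamma)$ is Theorem \ref{thm.cycleintegral} in disguise. To close it, observe that $j\bigl(M_\gamma\smat{e^t&0\\0&e^{-t}},i\bigr)=(e^{t}i+e^{-t})/\sqrt{w_\gamma-w'_\gamma}$ stays in the open first quadrant for all $t$, so the continuous branch of $\log j$ along the lift accumulates no spurious $2\pi$ and its endpoint difference equals the principal value $\log j(\gamma,M_\gamma i)$; the hypothesis $c>0$ guarantees $\arg j(\gamma,M_\gamma i)\in(0,\pi)$, consistent with the convention $\Im\log j\in[-\pi,\pi)$ used to define $\psi_{p,q}$. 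With that verified, your two parts coincide with Theorems \ref{thm.lens} and \ref{thm.S^3}.
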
 

In the course of the proof, we obtain the following as well. 
\begin{thmx}
\label{Rad-omnibus-theorem1}
The Rademacher symbol $\psi_{p,q}: \Gamma_{p,q} \to \Z$ has the following five $($partial$)$ characterizations. 
\begin{itemize}
\item[$(1)$] Definition $($Subsection \ref{s2-3}$)$: $\log \Delta_{p,q}(\gamma z) - \log \Delta_{p,q}(z) = 2pq \log (cz+d) + 2\pi i\psi_{p,q}(\gamma)$.
\item[$(2)$] Cycle integral $($Subsection \ref{s3-1}$)$: Let $\gamma \in \Gamma_{p,q}$ be as in Theorem \ref{our-main-theorem1}. For the harmonic Maass form $E_2^{(p,q),*}(z)$, we have
$\ds \int_{\ol{S}_\gamma} E_2^{(p,q),*}(z) dz = \frac{\,1\,}{r} \psi_{p,q}(\gamma).$
\item[$(3)$] $2$-cocycle $($Subsection \ref{s3-2}$)$: 
Define a bounded 2-cocycle $W : \SL_2 \R \times \SL_2 \R \to \{-1, 0, 1\}$ by 
$W(\gamma_1, \gamma_2) = \dfrac{1}{2\pi i} \big( \log j(\gamma_1, \gamma_2 z) + \log j(\gamma_2, z) - \log j(\gamma_1 \gamma_2, z) \big)$ 
as in Definition \ref{def.W}. 
Then, $\psi_{p,q}$ is a unique function satisfying $2pqW=-\delta^1\psi_{p,q}$. 
\item[$(4)$] Additive character $($Subection \ref{s3-3}$)$: Let $\chi_{p,q}: \widetilde{\Gamma}_{p,q} \to \Z$ denote the additive character defined by $\chi_{p,q}(\widetilde{S}_p) = -q$ and $\chi_{p,q}(\widetilde{U}_q) = -p$. 
Then $\psi_{p,q}(\gamma) = \chi_{p,q}(\widetilde{\gamma})$ holds for any $\gamma \in \Gamma_{p,q}$ and its standard lift $\tilde{\gamma}$.  
\item[$(5)$] Linking number $($Subsection \ref{s4-2}$)$: Theorem \ref{our-main-theorem1} $($I$)$. 
\end{itemize}
\end{thmx}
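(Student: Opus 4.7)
The plan is to take (1) as the defining characterization of $\psi_{p,q}$ and derive (2), (3), (4) from it, with (5) being the content of Theorem \ref{our-main-theorem1}(I) handled separately in Subsection \ref{s4-2}. The key inputs from earlier sections are the construction of $\Delta_{p,q}$ and $E_2^{(p,q),\ast}$, the branch convention $\Im \log(cz+d) \in [-\pi,\pi)$, and the standard presentation $\Gamma_{p,q} = \langle S_p, U_q \mid S_p^p = U_q^q = 1\rangle$. \emph{From (1) to (3):} I apply the defining transformation law twice, once directly for $\gamma_1\gamma_2$ acting on $z$ and once as the composition $\gamma_1$ after $\gamma_2$, and subtract. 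The $\Delta_{p,q}$-terms cancel; the $\psi_{p,q}$-terms assemble into $\delta^1\psi_{p,q}(\gamma_1,\gamma_2)$; and the $\log$-terms produce $-2pq\cdot 2\pi i\,W(\gamma_1,\gamma_2)$ by Definition \ref{def.W}, yielding $-\delta^1\psi_{p,q} = 2pqW$. Uniqueness follows because any other solution differs from $\psi_{p,q}$ by a homomorphism $\Gamma_{p,q}\to\Z$, but the abelianisation $\Gamma_{p,q}^{\mathrm{ab}} = \Z/p \times \Z/q$ admits no nontrivial such map.

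\emph{From (3) to (4):} On the simply connected cover $\wt{\SL_2}\R$ the bounded $2$-cocycle $W$ trivialises, so the pullback of $\psi_{p,q}$ along $\widetilde{\Gamma}_{p,q} \to \Gamma_{p,q}$ becomes a genuine additive character $\chi_{p,q}$ on $\widetilde{\Gamma}_{p,q}$. It then suffices to evaluate $\chi_{p,q}$ on the generators $\widetilde{S}_p$ and $\widetilde{U}_q$: applying the transformation law (1) to $S_p$ at points approaching its elliptic fixed point in $\bbH$ and carefully tracking the prescribed branch of $\log(cz+d)$ yields $\chi_{p,q}(\widetilde{S}_p) = -q$, and the symmetric computation for $U_q$ yields $-p$. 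The specific values $-q$ and $-p$ reflect the orders of the elliptic fixed points against the weight $2pq$ of $\Delta_{p,q}$.

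\emph{From (1) to (2):} Differentiating (1) in $z$ shows that $\tfrac{1}{2\pi i}\tfrac{d}{dz}\log \Delta_{p,q}(z)$ is a quasi-modular form of weight $2$; since $\Delta_{p,q}$ vanishes to order $r$ at $i\infty$, a Fourier-coefficient comparison (with the normalisation of $E_2^{(p,q)}$ fixed in Subsection \ref{s2-3}) identifies it with $r\cdot E_2^{(p,q)}(z)$. The harmonic completion $E_2^{(p,q),\ast}(z)$ is modular of weight $2$, so $E_2^{(p,q),\ast}(z)\,dz$ descends to the modular orbifold $\Gamma_{p,q}\backslash\bbH$. Integrating along a lift of $\ol{S}_\gamma$ from $z_0$ to $\gamma z_0$ and substituting (1), the contribution of the non-holomorphic correction exactly cancels the $2pq\log(cz+d)$ term of the transformation law, leaving $\tfrac{1}{r}\psi_{p,q}(\gamma)$.

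The main obstacle is the step (1)$\Rightarrow$(2), which requires verifying both that $\tfrac{d}{dz}\log \Delta_{p,q}(z) = 2\pi i\, r \cdot E_2^{(p,q)}(z)$ holds with the normalisations adopted in Subsection \ref{s2-3} and that the non-holomorphic correction in $E_2^{(p,q),\ast}$ precisely accounts for the coboundary term produced by the transformation law along the closed geodesic; the branch bookkeeping for the elliptic generators in step (4) is likewise delicate but routine once the convention is fixed.
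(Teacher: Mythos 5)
Your proposal is correct and follows the same overall architecture as the paper: (1) is the definition, (3) is obtained by subtracting two instances of the transformation law (the $\Delta_{p,q}$-terms cancel, the $\log j$-terms assemble into $2pqW$) with uniqueness from the vanishing of $\mathrm{Hom}(\Gamma_{p,q},\Z)$, (4) comes from trivialising $W$ on $\wt{\SL_2}\R$, and (2) splits $E_2^{(p,q),*}$ into holomorphic and non-holomorphic parts and integrates along the geodesic. Three points of comparison. First, in (2) you propose to verify $\frac{d}{dz}\log\Delta_{p,q}(z)=2\pi i r E_2^{(p,q)}(z)$ by a Fourier-coefficient comparison; in the paper this is true by construction, since $\Delta_{p,q}:=\exp F_{p,q}$ where $F_{p,q}$ is the regularized primitive of $2\pi i r E_2^{(p,q)}$, so no comparison is needed. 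Relatedly, the asserted cancellation of the non-holomorphic contribution against $2pq\log j(\gamma,z)$ holds only for the principal branch; the paper secures this by taking the base point $z_0=M_\gamma i$, and you should fix the base point likewise rather than leave it as "delicate bookkeeping". Second, your evaluation of the generators at the elliptic fixed points is a genuinely different, and cleaner, computation than the paper's: since $R_{p,q}(S_p,a)=0$ and $j(S_p,a)=e^{\pi i/p}$ at $a=e^{\pi i(1-1/p)}$, the transformation law gives $\psi_{p,q}(S_p)=-q$ at once, whereas the paper sums $\arg j(U_q,U_q^{\,k}z)$ over $k$ and lets $z\to i\infty$ using Chebyshev polynomials. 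Third, a factual slip: the presentation $\langle S_p,U_q\mid S_p^{\,p}=U_q^{\,q}=1\rangle$ and the abelianisation $\Z/p\times\Z/q$ are those of the image in $\PSL_2\R$; in $\SL_2\R$, where the paper works (and where, e.g., $\psi_{p,q}(-I)=pq\neq 0$), one has $S_p^{\,p}=U_q^{\,q}=-I$ and $\Gamma_{p,q}\cong\Z/2p\Z *_{\Z/2\Z}\Z/2q\Z$. This does not break your uniqueness argument, since the abelianisation is still finite and hence admits no nontrivial homomorphism to $\Z$, but the incorrect presentation would cause errors if used elsewhere (for instance in the lift computations for (4)).
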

The first two properties are described by means of modular forms, while the third and forth are related to the universal covering group. The final property is of low dimensional topology. 

Although $\psi_{p,q}$ is not a class-invariant function, it seems to be rather a natural object in some aspects and easier to treat, as (1), (3), (4) explain.
Besides, we may modify $\psi_{p,q}$ to define several class-invariant functions, namely, 
the original Rademacher symbol $\Psi_{p,q}$, the homogeneous Rademacher symbol $\Psi_{p,q}^{\rm h}$, 
and the modified Rademacher symbol $\Psi_{p,q}^{\rm e}$ with distinct advantages.

Our results mainly concern Ghys's first proof and briefly a half of his second proof. 
Those are comparable with the results of Dehornoy--Pinsky \cite{Dehornoy2015AGT, DehornoyPinsky2018ETDS} on templates and codings related to Ghys's third proof (cf. Subsection \ref{ss.templates}).

We remark that (mock) modular forms for triangle groups are quite less studied than those for congruence subgroups of $\SL_2\Z$, although they would also be of arithmetic interest; 
For instance, Wolfart \cite{Wolfart1983MA} showed that Fourier coefficients of holomorphic modular forms for the triangle group are mostly transcendental numbers (see also \cite{DoranGannonMovasatiShokri2013}). 
Our study would hopefully give 
a new cliff in this direction. 

The rest of the article is organized as follows. 
In Section \ref{s2}, we recollect harmonic Maass form for the triangle groups to construct the harmonic Maass form $E_2^{(p,q),*}(z)$ and the holomorphic cusp form $\Delta_{p,q}(z)$. 
In Section \ref{s3}, we define the Rademacher symbol $\psi_{p,q}$ for $\Gamma_{p,q}$ and prove the equivalence of (1)--(4) in Theorem \ref{Rad-omnibus-theorem1}. In addition, we discuss several variants of $\psi_{p,q}$. 
In Section \ref{s4}, based on Tsanov's group theoretic study, we establish Theorem \ref{our-main-theorem1} on the linking numbers of modular knots in $L(r,p-1)$ and $S^3$, completing a generalization of Ghys's first proof. 
Further more, 
we define knots corresponding to elliptic and parabolic elements to extend the theorem and give a characterization of the modified symbol $\Psi_{p,q}^{\rm e}$ via an Euler cocycle, to justify Ghys's outlined second proof.  
Finally in Section \ref{s5}, we give remarks on templates and codings and on the Sarnak--Mozzochi distribution theorem for $\Gamma_{p,q}$, and attach further problems. 


\section{Harmonic Maass forms for triangle groups}
\label{s2}


In this section, we introduce harmonic Maass forms for a triangle group $\Gamma_{p,q}$. In particular, we construct two important functions $E_2^{(p,q),*}(z)$ and $\Delta_{p,q}(z)$. The function $E_2^{(p,q),*}(z)$ is a unique harmonic Maass form of weight $2$ on $\Gamma_{p,q}$ with polynomial growth at cusps, and the function $\Delta_{p,q}(z)$ is a unique holomorphic cusp form of weight $2pq$ with no poles and zeros on $\bbH$.


\subsection{Triangle groups} 

Let $(p,q)$ be a coprime pair of integers with $2\leq p<q$ and put $r=pq-p-q$ as before. 
In this subsection, we define the triangle group $\Gamma_{p,q}$ as a subset of $\SL_2\R$ and recall several properties. 

Recall that ${\rm PSL}_2\R=\SL_2\R/\{\pm I\}$ acts on the upper half-plain $\bbH=\{z\in \C \mid {\rm Im}(z)>0\}$  via the M\"obius transformation $\gamma z=\frac{az+b}{cz+d}$ for $\gamma=\smat{a&b\\c&d}$ and $z\in \bbH$, so that 
${\rm PSL}_2\R={\rm Isom}^+\bbH$ holds. 
Triangle groups are often defined as a subgroup of ${\rm PSL}_2\R$ 
generated by reflections on the sides of a triangle in $\bbH$. 
However, we here define them as subgroups of $\SL_2\R$ to make our argument simple. Put
\[
T_{p,q} = \pmat{1 & 2 \left(\cos \frac{\pi}{p} + \cos \frac{\pi}{q} \right) \\ 0 & 1},\ 
S_p = \pmat{0 & -1 \\ 1 & 2 \cos \frac{\pi}{p}},\ U_q = \pmat{2 \cos \frac{\pi}{q} & -1 \\ 1 & 0},
\]
so that we have $S_p\!^p=U_q\!^q=-I$ and $T_{p,q}=-U_qS_p$. 
\begin{definition} \label{Gpq-generator}
\emph{The $(p,q)$-triangle group} $\Gamma_{p,q}=\Gamma(p,q,\infty)$ is a subgroup of $\SL_2\R$ generated by 
elements $S_p$ and $U_q$. 
\end{definition} 

This group $\Gamma_{p,q}$ is a Fuchsian group of the first kind. We especially have $\Gamma_{2,3}=\SL_2\Z$.
There is an isomorphism to the amalgamated product 
\[ \Gamma_{p,q} \cong \langle S_p \rangle *_{\langle -I \rangle} \langle U_q \rangle \cong \Z/2p\Z *_{\Z/2\Z} \Z/2q\Z,\] 
which is obtained by applying \cite[Theorem 6]{Serre2003} to a geodesic segment $T = \{e^{\pi i\theta} \mid 1/q \leq \theta \leq 1 - 1/p\} \subset \bbH$. 

We can visualize the group $\Gamma_{p,q}$ by its fundamental domain in $\bbH$. Let $\Delta=\Delta(p,q, \infty)$ denote the triangle with interior angles $\pi/p, \pi/q, 0$ defined by
\[ \Delta(p,q, \infty) = \{z \in \bbH \mid - \cos \frac{\pi}{p} \leq \Re(z) \leq \cos \frac{\pi}{q}, |z| \geq 1 \}. \]
In addition, let $\Delta'=\Delta'(p,q,\infty)$ denote the reflection of $\Delta(p,q,\infty)$ with respect to the geodesic $\{e^{i\theta} \mid 0 < \theta < \pi\}$, that is, we put 
\[ \Delta'(p,q,\infty) = \{ z \in \bbH \mid \spmx{0 & 1 \\ 1 & 0} \overline{z} = \frac{\,1\,}{\overline{z}} \in \Delta(p,q,\infty) \}. \]
Then, the set $D_{p,q} = \Delta(p,q,\infty) \cup \Delta'(p,q,\infty)$ is a fundamental domain of $\Gamma_{p,q}$.

The vertices $a = e^{\pi i \left(1-1/p \right)}, b = e^{\pi i/q}$, and $i\infty$ of $\Delta(p,q,\infty)$ are fixed points of $S_p, U_q$, and $T_{p,q}$, respectively. The first two vertices $a$ and $b$ are called \emph{elliptic points} of $\Gamma_{p,q}$, and $i\infty$ is called a \emph{cusp} of $\Gamma_{p,q}$. 
The stabilizer subgroups of these vertices are given by $(\Gamma_{p,q})_a = \langle S_p \rangle$, $(\Gamma_{p,q})_b = \langle U_q \rangle$, and $(\Gamma_{p,q})_\infty = \pm \langle T_{p,q} \rangle$, respectively. The two sides of the quadrangle $D_{p,q}$ joined at each elliptic point are $\Gamma_{p,q}$-equivalent. Hence, the Riemann surface $\Gamma_{p,q} \backslash \bbH$ has one cusp, two elliptic points, and genus $0$. 
By the Gauss--Bonnet theorem, or by a direct calculation, one may verify that
\[ \vol(\Gamma_{p,q} \backslash \bbH) = \int_{D_{p,q}} \frac{dx dy}{y^2} = \frac{2\pi r}{pq}.\]

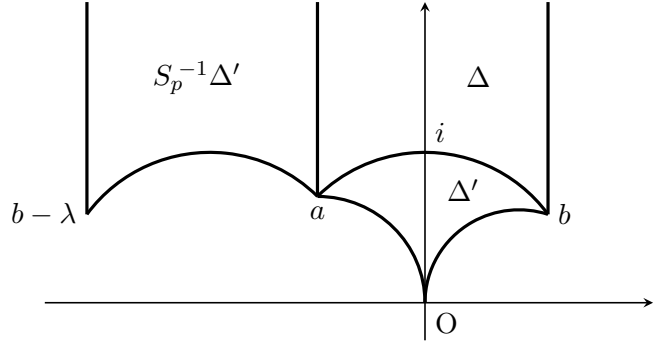
\begin{figure}[H]
\begin{center}
\begin{tikzpicture}
	\draw[->,>=stealth,semithick] (-5,0)--(3,0); 
	\draw[->,>=stealth,semithick] (0,-0.5)--(0,4); 
	\draw (0,0)node[below right]{O}; 
	
	\draw (1.618, 1.176) node[right]{$b$};
	\draw [very thick] (1.618, 1.176) arc (36:135:2);
	\draw (-1.414, 1.414) node[below]{$a$};
	\draw (0,2) node[above right]{$i$};
	
	\draw [very thick] (1.618, 1.176)--(1.618, 4);
	\draw [very thick] (-1.414, 1.414)--(-1.414, 4);
	
	\draw [very thick] (0,0) arc (180:72:1.236);
	\draw [very thick] (0,0) arc (0:90:1.414);
	
	\draw (0.7,3) node {$\Delta$};
	\draw (0.5,1.5) node {$\Delta'$};
	
	\draw [very thick] (-4.446, 1.176)--(-4.446, 4);
	\draw [very thick] (-4.446, 1.176) arc (144:45:2);
	\draw (-4.446, 1.176) node[left]{$b-\lambda$};
	\draw (-3,3) node {$S_p\!^{-1} \Delta'$};
\end{tikzpicture}
\end{center}
\caption{Fundamental domain}
\label{fundamental-dom}
\end{figure}

In general, an element $\gamma \in \Gamma_{p,q}$ is said to be elliptic if $|\tr \gamma|<2$, parabolic if $|\tr \gamma|=2$, and hyperbolic if $|\tr \gamma|>2$. 
In each case, the conjugacy class of $\gamma$ correspond to elliptic points, the cusp, and closed geodesics on $\Gamma_{p,q} \backslash \bbH$ respectively (see also Subsection 3.1). 
An element $\gamma\in \Gamma_{p,q}$ is said to be \emph{primitive} if $\gamma=\pm \sigma^n$ for $\sigma\in \Gamma_{p,q}$ and $n\in \Z$ implies that $n=\pm1$. 

We make use of the following lemma in later calculations. 
\begin{lemma}\label{Chebyshev}
For each integer $n \in \Z$, let $C_n(x) \in \Z[x]$ denote the Chebyshev polynomial of the second kind characterized by $C_{n}(\cos t) = \sin nt/\sin t$, $t\in \R$. Then $C_0(x) = 0$, $C_1(x) = 1$, and $C_{n+1}(x) = 2x C_n(x) - C_{n-1}(x)$ hold.   
The generators $S_p$ and $U_q$ satisfy
\[ S_p\!^{\,n} = \pmat{-C_{n-1} (\cos \frac{\pi}{p}) & -C_n (\cos \frac{\pi}{p}) \\ C_n (\cos \frac{\pi}{p}) & C_{n+1}(\cos \frac{\pi}{p})},\ U_q\!^n = \pmat{C_{n+1} (\cos \frac{\pi}{q}) & -C_n (\cos \frac{\pi}{q}) \\ C_n(\cos \frac{\pi}{q}) & -C_{n-1} (\cos \frac{\pi}{q})}. \]
\end{lemma}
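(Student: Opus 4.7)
The plan is a straightforward induction on $n$, using the three-term recurrence of the Chebyshev polynomials together with the matrix identity $S_p^{\,n+1}=S_p^{\,n}\cdot S_p$ (and analogously for $U_q$). Before touching the matrix assertions, I would first record the closed form $C_n(\cos t)=\sin(nt)/\sin t$: dividing the trigonometric identity $\sin((n+1)t)+\sin((n-1)t)=2\cos t\,\sin(nt)$ by $\sin t$ shows that $\sin(nt)/\sin t$ satisfies the defining recurrence of $C_n$, and the initial values $C_0=0$, $C_1=1$ agree. Running the recurrence backwards yields $C_{-1}(x)=-1$ and inductively $C_{-n}(x)=-C_n(x)$, consistent with $\sin(-nt)/\sin t=-\sin(nt)/\sin t$; this takes care of negative indices uniformly.

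Next I would verify the formula for $S_p^{\,n}$ by induction on $|n|$. The base cases $n=0$ and $n=1$ are immediate: at $n=0$ the right-hand side collapses to $I$ via $C_{-1}=-1$, $C_0=0$, $C_1=1$, and at $n=1$ it reproduces the definition of $S_p$ since $C_2(x)=2x$. For the inductive step I would expand $S_p^{\,n+1}=S_p^{\,n}\cdot S_p$ using the claimed formula; each of the four resulting entries collapses, after a single application of the Chebyshev relation $C_{k+1}(\cos\frac{\pi}{p})=2\cos\frac{\pi}{p}\cdot C_k(\cos\frac{\pi}{p})-C_{k-1}(\cos\frac{\pi}{p})$ with the appropriate shift of $k$, to the claimed entry with $n$ replaced by $n+1$. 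The descent to negative powers is handled by the symmetric step $S_p^{\,n-1}=S_p^{\,n}\cdot S_p^{-1}$, or alternatively by reducing modulo the relation $S_p^{\,p}=-I$ to the nonnegative range.

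The argument for $U_q^{\,n}$ is entirely parallel; only the sign pattern and the placement of the cosine entry differ, reflecting the distinct shapes of $S_p$ and $U_q$. I do not anticipate a genuine obstacle: the proof is mechanical and the only care-point is aligning the index shift $(n-1,n,n+1)$ in the matrix entries with the Chebyshev recurrence so that the induced entries of $S_p^{\,n+1}$ read $(n,n+1,n+2)$ with correct signs. Writing out the $(1,1)$- and $(2,1)$-entries once makes the pattern transparent for the remaining two. A slicker but equivalent alternative would be to diagonalise $S_p$ and $U_q$ over $\C$ using their eigenvalues $e^{\pm i\pi/p}$ and $e^{\pm i\pi/q}$ and to read off entries as linear combinations of $e^{\pm in\pi/p}$ and $e^{\pm in\pi/q}$; this merely repackages the same computation via $2i\sin(n\pi/p)$ and $2i\sin(n\pi/q)$.
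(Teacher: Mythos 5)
Your proof is correct, and it is exactly the routine induction (base cases $n=0,1$, the three-term Chebyshev recurrence applied entrywise to $S_p^{\,n+1}=S_p^{\,n}S_p$ and $U_q^{\,n+1}=U_q^{\,n}U_q$, and $C_{-n}=-C_n$ for the descent to negative exponents) that the paper leaves implicit by stating the lemma without proof. All the entry computations and sign checks you outline do go through as claimed, so there is nothing to add.
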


\subsection{Modular forms for $\Gamma_{p,q}$}
In this subsection, we recollect the notions of meromorphic modular forms and harmonic Maass forms for triangle groups together with some properties.  

\subsubsection{Meromorphic modular forms} 
For $\gamma = \smat{a&b\\c&d} \in \SL_2 \R$ and a variable $z\in \bbH$, \emph{the automorphic factor} is defined by $j(\gamma, z) = cz+d$, so that the cocycle condition $j(\gamma_1 \gamma_2, z) = j(\gamma_1, \gamma_2 z) j(\gamma_2, z)$ for any $\gamma_1, \gamma_2 \in \SL_2 \R$ holds. 
For the pair of a function $f : \bbH \to \C$ and an element $\gamma \in \SL_2 \R$, \emph{the slash operator of weight $k \in \Z$} is defined by $(f|_k \gamma) (z) = j(\gamma, z)^{-k} f(\gamma z)$.

\begin{definition}
A meromorphic function $f : \bbH \to \C\cup\{\infty\}$ is called a \emph{meromorphic modular form of weight $k \in \Z$ for $\Gamma_{p,q}$} if the following conditions hold.
	\begin{enumerate}
		\item $f|_k \gamma = f$ for every $\gamma \in \Gamma_{p,q}$.
		\item Put $\lambda = 2 \big(\cos \frac{\pi}{p} + \cos \frac{\pi}{q}\big)$ and $q_\lambda = e^{2\pi i z/\lambda}$. Then $f(z)$ has a Fourier expansion of the form 
		\[
			f(z) = \sum_{n=n'}^\infty a_n q_\lambda^{\,n}, \ a_n\in \C 
		\]
		for some $n'\in \Z$. 
	\end{enumerate} 
If in addition $f$ is holomorphic on $\bbH$ and $a_n=0$ holds for all $n<0$ (resp. $n\leq 0$), then $f$ is called a \emph{holomorphic modular form} (resp. \emph{cusp form}) of weight $k$ for $\Gamma_{p,q}$. 
The space of all holomorphic modular forms of weight $k$ for $\Gamma_{p,q}$ is denoted by $\mathcal{M}_k(\Gamma_{p,q})$. 
\end{definition}

Cauchy's residue theorem yields the following valence formula in a similar manner to \cite[Proposition 3.8]{Koblitz1993GTM}: 

\begin{proposition}[(The valence formula)] \label{valence} 
Let $f$ be a non-zero meromorphic modular form of weight $k$ for $\Gamma_{p,q}$. 
Let $v_P(f)$ denote the order of zero of $f$ at each $z=P$ on $\Gamma_{p,q}\backslash \bbH$ and put $v_\infty(f) = \min\{n \in \Z \mid a_n \neq 0\}$. 
Then \[
		v_\infty (f) + \frac{\,1\,}{p} v_a (f) + \frac{\,1\,}{q} v_b (f) + \sum_{\substack{P \in \Gamma_{p,q} \backslash \bbH \\ P \neq a, b}} v_P(f) = \frac{r}{2pq} k
	\]
holds, where $a = e^{\pi i (1-1/p)}$ and $b = e^{\pi i/q}$ are the fixed points of $S_p$ and $U_q$ respectively. 
\end{proposition}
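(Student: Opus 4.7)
The plan is to adapt the classical contour-integral proof of the valence formula for $\SL_2\Z$ (\cite[Proposition 3.8]{Koblitz1993GTM}) to the quadrilateral fundamental domain $D_{p,q}$. I would apply the argument principle to $\tfrac{1}{2\pi i}\tfrac{f'(z)}{f(z)}\,dz$ around a modified copy of $\partial D_{p,q}$ obtained as follows: excise small hyperbolic disks around the elliptic vertices $a = e^{\pi i(1-1/p)}$ and $b = e^{\pi i/q}$, cut off horocyclic neighbourhoods of both boundary cusps $i\infty$ and $0$ (both lying in the unique cusp orbit, since $S_p(i\infty)=0$), and if necessary perturb the sides to avoid any remaining zeros and poles of $f$ via $\Gamma_{p,q}$-equivalent detours. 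On the resulting region $D^{*}$, the argument principle reads
\[
  \frac{1}{2\pi i}\oint_{\partial D^{*}}\frac{f'(z)}{f(z)}\,dz \;=\; \sum_{\substack{P \in \Gamma_{p,q}\backslash \bbH \\ P \neq a,b,\infty}} v_P(f).
\]

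Next I would analyse $\partial D^{*}$ piece by piece. The small arcs around $a$ and $b$ contribute $-v_a(f)/p$ and $-v_b(f)/q$ in the shrinking limit, the denominators arising from the interior angles $2\pi/p$ and $2\pi/q$ of the quadrilateral at those points. The horocycle pieces near $i\infty$ and $0$, combined via the cusp identification and expressed through the $q_\lambda$-Fourier expansion of $f$, contribute $-v_\infty(f)$ in total. The four remaining sides of $D_{p,q}$ are paired at the elliptic vertices: the left vertical side (from $a$ to $i\infty$) with the left arc (from $a$ to $0$) by $S_p$, and the right vertical with the right arc by $U_q$. On each such pair, the modularity law $f(\gamma z)=j(\gamma,z)^k f(z)$ yields
\[
  \frac{f'(\gamma z)}{f(\gamma z)}\gamma'(z)\,dz - \frac{f'(z)}{f(z)}\,dz = k\,d\log j(\gamma,z),
\]
so the paired integrals do not cancel but leave $\tfrac{k}{2\pi i}$ times a boundary term $[\log j(\gamma,z)]$ evaluated at the paired endpoints.

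The main and only delicate step is then to verify that the total residual from the $S_p$- and $U_q$-pairs equals exactly $\tfrac{kr}{2pq}$. This is a Gauss--Bonnet-type identity: tracking the branches of $\log j(S_p,z)$ and $\log j(U_q,z)$ at the endpoints with the correct orientations, the residual sum should evaluate to $\tfrac{k}{4\pi}\cdot \vol(\Gamma_{p,q}\backslash\bbH) = \tfrac{k}{4\pi}\cdot\tfrac{2\pi r}{pq} = \tfrac{kr}{2pq}$. Assembling all the contributions above and rearranging then yields the asserted valence formula. The bookkeeping of these logarithm branches and orientations is the principal obstacle; it is somewhat more intricate than in the $\SL_2\Z$ case because $D_{p,q}$ carries two elliptic vertices and two cusp vertices on its boundary, but the angle-defect identity recorded in the volume computation makes the final answer match cleanly.
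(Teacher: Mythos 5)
Your proposal is correct and follows exactly the route the paper intends: the paper offers no proof beyond the one-line remark that Cauchy's residue theorem applies ``in a similar manner to'' the classical case in Koblitz, and your contour argument on $D_{p,q}$ --- with the angle $2\pi/p$, $2\pi/q$ corrections at $a$, $b$, the two width-$\lambda/2$ horocycle pieces at $i\infty$ and $0$ jointly giving $-v_\infty(f)$, and the $S_p$-, $U_q$-side pairings producing the constant $\frac{k}{4\pi}\vol(\Gamma_{p,q}\backslash\bbH)=\frac{kr}{2pq}$ --- is the standard adaptation and checks out (e.g.\ the logarithm bookkeeping gives $k\bigl(\frac{1}{2}-\frac{1}{2p}-\frac{1}{2q}\bigr)=\frac{kr}{2pq}$ as you assert).
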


We use the following lemma later. 
\begin{lemma} \label{lem.M2=0} 
$\mathcal{M}_2(\Gamma_{p,q})=0$. 
\end{lemma}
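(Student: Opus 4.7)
The plan is to apply the valence formula (Proposition \ref{valence}) with $k=2$ and show that the resulting Diophantine condition on the orders of vanishing has no non-negative solution.

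Concretely, for a nonzero $f \in \mathcal{M}_2(\Gamma_{p,q})$ all the terms on the left-hand side of the valence formula are non-negative, and the right-hand side equals
\[
\frac{r}{2pq}\cdot 2 = \frac{r}{pq} = 1 - \frac{\,1\,}{p} - \frac{\,1\,}{q},
\]
which is strictly between $0$ and $1$ because $2\leq p<q$ forces $\tfrac{1}{p}+\tfrac{1}{q}\leq \tfrac{1}{2}+\tfrac{1}{3}<1$. Since $v_\infty(f)\in\Z_{\geq 0}$ and $\sum_{P\neq a,b} v_P(f)\in\Z_{\geq 0}$, any positive contribution from either of these would already force the left-hand side to be $\geq 1$. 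Hence one must have $v_\infty(f)=0$ and $v_P(f)=0$ for every $P\neq a,b$, and the valence formula collapses to
\[
q\, v_a(f) + p\, v_b(f) = pq - p - q = r,
\]
with $v_a(f), v_b(f)\in\Z_{\geq 0}$.

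The final step is to verify that this equation has no non-negative integer solution. Reducing modulo $p$ and using $\gcd(p,q)=1$ gives $v_a(f)\equiv -1\pmod p$, so $v_a(f)\geq p-1$. If $v_a(f)=p-1$ then substituting back yields $p\,v_b(f) = -p$, hence $v_b(f)=-1<0$; and if $v_a(f)\geq 2p-1$ then $q\,v_a(f)\geq 2pq-q > pq-p-q$, which is impossible. Hence no such $(v_a(f), v_b(f))$ exists, and we conclude $\mathcal{M}_2(\Gamma_{p,q})=0$.

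There is no real obstacle here beyond the elementary arithmetic; the only subtlety is to correctly use that $v_\infty(f)$ and the non-elliptic $v_P(f)$ are integers (so they contribute in units of $1$), whereas the elliptic contributions are rationals with denominators $p$ and $q$, and this numerical gap between the small value $1-\tfrac{1}{p}-\tfrac{1}{q}$ and any integer is exactly what rules out the existence of weight-$2$ holomorphic forms.
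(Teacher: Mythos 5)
Your proof is correct and follows essentially the same route as the paper: apply the valence formula with $k=2$, note the right-hand side lies strictly between $0$ and $1$ to kill the integer-valued contributions, and then use the coprimality of $p$ and $q$ to show the remaining equation $q\,v_a(f)+p\,v_b(f)=pq-p-q$ has no non-negative integer solution. The paper phrases the last step as $v_a(f)+1=pl$ with $l\geq 1$ leading to $p(v_b(f)+1)+pq(l-1)=0>0$, while you reduce modulo $p$; these are the same arithmetic.
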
 
\begin{proof} Suppose $0\neq f \in \mathcal{M}_2(\Gamma_{p,q})$ and put $N=v_\infty (f)+\sum_P v_P(f)$, $A=v_a(f)$, $B=v_b(f)$. Then we have $N,A,B\in \Z_{\geq 0}$. In addition, Proposition \ref{valence} yields $N+\frac{A}{p}+\frac{B}{q}=\frac{r}{pq}=\frac{pq-p-q}{pq}$. 
Since $0<\frac{pq-p-q}{pq}<1$, we have $N=0$, hence $p(B+1)+q(A+1)-pq=0$. 
Since $p$ and $q$ are coprime, we may write $A+1=pl$ for some $l \in \Z_{>0}$. 
Now we have $0=p(B+1)+pq(l-1)>0$, hence contradiction. Therefore we have $f=0$. 
\end{proof}

\subsubsection{Harmonic Maass forms} 
The notion of harmonic Maass forms is a generalization of holomorphic modular forms. 
It was introduced by Bruinier--Funke \cite{BrunierFunke2004} to study geometric theta lifts 
and played a crucial role in the study of Ramanujan's mock theta functions. 
It is defined by using $\xi$-differential operators and the hyperbolic Laplace operators. 

\begin{definition} Let $k\in \Z$. For a real analytic function $f:\bbH\to \C$, \emph{the $\xi$-differential operator} $\xi_k$ of weight $k$ is defined by
	\[
		\xi_k f= 2iy^k \overline{\frac{\partial}{\partial \overline{z}} f},
	\]
	where $\partial/\partial \overline{z}$ is Wirtinger's derivative defined by
	\[
		\frac{\partial}{\partial \overline{z}} = \frac{\,1\,}{2} \big(\frac{\partial}{\partial x} + i \frac{\partial}{\partial y} \big).
	\]
	\emph{The hyperbolic Laplace operator} $\Delta_k$ of weight $k$ is defined by
	\[
		\Delta_k = -\xi_{2-k} \circ \xi_k = -y^2 \big(\frac{\partial^2}{\partial x^2} + \frac{\partial^2}{\partial y^2} \big) + iky \big(\frac{\partial}{\partial x} + i \frac{\partial}{\partial y} \big).
	\]
\end{definition} 
A direct calculation yields that  
\[ \xi_k (f|_k \gamma) = (\xi_k f)|_{2-k} \gamma\] holds for any $\gamma \in \Gamma_{p,q}$. 
Hence if $f$ satisfies the modular transformation law $f|_k \gamma = f$ of weight $k$ for every $\gamma \in \Gamma_{p,q}$, then so does $\xi_k f$ of weight $2-k$. 
We also note that if $f$ is a holomorphic function, then $\xi_k f=0$ holds.

\begin{definition} \label{def-harmonic-Maass}
	A real analytic function $f : \bbH \to \C$ is called a \emph{harmonic Maass form} of weight $k \in \Z$ for $\Gamma_{p,q}$ if the following conditions hold. 
	\begin{enumerate}
		\item $f|_k \gamma = f$ for every $\gamma \in \Gamma_{p,q}$.
		\item $\Delta_k f(z) = 0$. 
		\item There exists $\alpha > 0$ such that $f(x+iy) = O(y^\alpha)$ as $y \to \infty$ uniformly in $x \in \R$.
	\end{enumerate}
	The space of all harmonic Maass forms of weight $k$ for $\Gamma_{p,q}$ is denoted by $\mathcal{H}_k(\Gamma_{p,q})$. 
\end{definition}
We remark that in a basic textbook of harmonic Maass forms \cite[Definition 4.2]{BringmannFolsomOnoRolen2017}, for instance, the condition (iii) is replaced by a slightly different condition, namely, (iii') There exists a polynomial $P_f(z) \in \C[q_\lambda^{-1}]$ such that $f(z) - P_f(z) = O(e^{-\varepsilon y})$ as $y \to \infty$ for some $\varepsilon > 0$. 
Whichever condition is chosen, 
we have $\mathcal{M}_k(\Gamma_{p,q}) \subset \mathcal{H}_k(\Gamma_{p,q})$ and the $\xi$-differential operator of weight $k$ induces a linear map $\xi_k: \mathcal{H}_k(\Gamma_{p,q}) \to \mathcal{M}_{2-k}(\Gamma_{p,q})$.
A virtue of our choice (iii) is that the function $E_2^{(p,q),*}(z)$ in Subsection 2.3 will be a harmonic Maass form. 

Let $f \in \mathcal{H}_k(\Gamma_{p,q})$ and suppose $k\neq1$. Then 
a standard argument yields a Fourier expansion 
\[ f(x+iy) = \sum_{n \geq 0} c^+(n) q_\lambda^n + c^-(0) y^{1-k} + \sum_{n<0} c^-(n) y^{-k/2} W_{-\frac{k}{2}, \frac{k-1}{2}} (4\pi |n| \dfrac{y}{\lambda}) e^{2\pi inx/\lambda}, \]
where $c^+(n)$ with $n\geq 0$ and $c^-(n)$ with $n\leq 0$ are complex constants and 
$W_{\mu, \nu}(y)$ denotes \emph{the} so-called \emph{$W$-Whittaker function} (cf.~\cite[Chapter VII]{MagnusOberhettingerSoni1966}). 
If instead $k=1$, then $y^{1-k}$ is replaced by $\log y$.

The holomorphic part $f^+(z) = \sum_{n \geq 0} c^+(n) q_\lambda^n$ 
of each $f\in \mathcal{H}_k(\Gamma_{p,q})$ is called a \emph{mock modular form of weight $k$ for} $\Gamma_{p,q}$. 

On the other hand, we remark that the Fourier coefficients $c^-(n)$ of the remaining non-holomorphic part are closely related to a function  $\xi_kf \in \mathcal{M}_{2-k}(\Gamma_{p,q})$ called \emph{the shadow of} the mock modular form $f^+$. In fact, we have 
\[\xi_k f(z) = (1-k) \overline{c^-(0)} - \sum_{n > 0} \overline{c^-(-n)} \big(\dfrac{4\pi n}{\lambda}\big)^{\frac{2-k}{2}} q_\lambda^n.\]


\subsection{The harmonic Maass form $E_2^{(p,q),\ast}(z)$ of weight $2$}

In this subsection, we construct a harmonic Maass form $E_2^{(p,q),\ast}(z)$ and 
a mock modular form $E_2^{(p,q)}(z)$ of weight 2 for $\Gamma_{p,q}$ with explicit descriptions. 
For this purpose, we first recollect the notion of the Eisenstein series $E_{2k}^{(p,q)}(z,s)$ of even weight for $\Gamma_{p,q}$, that yields most basic examples of harmonic Maass forms. 
We refer to Iwaniec's book \cite{Iwaniec2002GSM} and Goldstein's paper \cite{Goldstein1973Nagoya} for some properties, but we rather follow a standard recipe of mock modular forms. 

\subsubsection{The Eisenstein series} 
Recall that the triangle group $\Gamma_{p,q} < \SL_2 \R$ is a Fuchsian group with finite covolume $\mathrm{vol}(\Gamma_{p,q} \backslash \bbH) = 2\pi r/pq$ and the stabilizer subgroup of the unique cusp $i\infty$ is given by $(\Gamma_{p,q})_{i\infty} = \pm \langle T_{p,q} \rangle$. 

Let $\lambda = 2 (\cos \frac{\pi}{p} + \cos \frac{\pi}{q})$ as before and put $\sigma = \smat{\lambda^{1/2} & 0 \\ 0 & \lambda^{-1/2}} \in \SL_2\R$, so that 
$\sigma$ is a scaling matrix of the cusp $i\infty$, that is, $\sigma i\infty = i\infty$ and $\sigma^{-1} T_{p,q} \sigma = \smat{1 & 1 \\ 0 & 1}$ hold. 

\begin{definition}
Let $k$ be an integer. For $z \in \bbH$ and $s \in \C$ with $\Re(s) > 1$, the \emph{real analytic Eisenstein series} of weight $2k$ for $\Gamma_{p,q}$ is defined by
\begin{align*}
	E_{2k}^{(p,q)} (z,s) &= \sum_{\gamma \in (\Gamma_{p,q})_\infty \backslash \Gamma_{p,q}} {\rm Im}(z)^{s-k} \bigg|_{2k} (\sigma^{-1} \gamma)\\
		&= \frac{\,1\,}{\lambda^s} \sum_{\gamma \in (\Gamma_{p,q})_\infty \backslash \Gamma_{p,q}} \frac{{\rm Im}(\gamma z)^{s-k}}{j(\gamma, z)^{2k}}.
\end{align*}
\end{definition}

For each $s$ with ${\rm Re}(s) > 1$, as a function in $z$, this series converges absolutely and uniformly on compact subsets of $\bbH$. 
By the definition, $(E_{2k}^{(p,q)}|_{2k} \gamma) (z,s) = j(\gamma, z)^{-2k} E_{2k}^{(p,q)}(\gamma z,s) = E_{2k}^{(p,q)}(z,s)$ holds for any $\gamma \in \Gamma_{p,q}$. 
By the commutativity $\xi_k (f|_k \gamma) = (\xi_k f) |_{2-k} \gamma$ and the equation $\xi_{2k} y^{s-k} = (\overline{s} - k) y^{\overline{s}-(1-k)}$, for each $s$ with $\Re(s) > 1$, we have
\[
	\xi_{2k} E_{2k}^{(p,q)}(z,s) = (\overline{s} - k) E_{2-2k}^{(p,q)}(z,\overline{s})
\]
and
\[
	\Delta_{2k} E_{2k}^{(p,q)} (z,s) = (s-k)(1-k-s) E_{2k}^{(p,q)} (z,s).
\] 

\subsubsection{The limit formula} 
The following is classically known. 
\begin{proposition}[{\cite[Proposition 6.13]{Iwaniec2002GSM}}]
	The Eisenstein series $E_0^{(p,q)}(z,s)$ of weight $0$ has a meromorphic continuation around $s = 1$ with a simple pole there with residue
	\[
		\Res_{s = 1} E_0^{(p,q)}(z,s) = \frac{\,1\,}{\mathrm{vol}(\Gamma_{p,q} \backslash \bbH)} = \frac{pq}{2\pi r}.
	\]
\end{proposition}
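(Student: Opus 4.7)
The plan is to obtain this as a direct application of the general Selberg--Roelcke theory of Eisenstein series for Fuchsian groups of the first kind, together with the volume computation $\vol(\Gamma_{p,q}\backslash\bbH) = 2\pi r/pq$ already established in Subsection 2.1 via Gauss--Bonnet. Since $\Gamma_{p,q}$ is a Fuchsian group of the first kind of finite covolume with a unique cusp at $i\infty$ and scaling matrix $\sigma=\smat{\lambda^{1/2}&0\\0&\lambda^{-1/2}}$, the series $E_0^{(p,q)}(z,s)$ as defined above fits verbatim into the setup of Iwaniec's book (the factor $\lambda^{-s}$ in our definition is exactly the normalization induced by $\sigma$). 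So the abstract meromorphic continuation and the location of poles of $E_0^{(p,q)}(z,s)$ in the half-plane $\Re(s)\geq 1/2$ follow from the standard analysis, and the only pole there is simple and located at $s=1$.

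To identify the residue, I would pass to the Fourier expansion at the cusp $i\infty$. Sorting the sum according to the lower-left entry $c=0$ versus $c\ne 0$ gives a constant term of the shape
\[
 \lambda^{-s} y^s + \varphi_{p,q}(s)\,y^{1-s},
\]
where $\varphi_{p,q}(s)$ is a Dirichlet-type series over double cosets $(\Gamma_{p,q})_\infty\backslash\Gamma_{p,q}/(\Gamma_{p,q})_\infty$. Since $\Delta_0 E_0^{(p,q)}(z,s) = s(1-s)E_0^{(p,q)}(z,s)$, the residue $\Res_{s=1}E_0^{(p,q)}(z,s)$ is a $\Delta_0$-harmonic, $\Gamma_{p,q}$-invariant function of polynomial growth on $\Gamma_{p,q}\backslash\bbH$, hence a constant by the maximum principle (since the surface has finite volume and one cusp). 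The value of this constant is then pinned down by the Maass--Selberg / Rankin--Selberg unfolding: for $\Re(s)>1$,
\[
 \int_{\Gamma_{p,q}\backslash\bbH} E_0^{(p,q)}(z,s)\,\frac{dxdy}{y^2} = \lambda^{-s}\int_0^\infty\!\!\int_0^\lambda y^{s-2}\,dxdy,
\]
whose meromorphic continuation shows that $\Res_{s=1}$ of the left-hand side equals $1$, so $\Res_{s=1}E_0^{(p,q)}(z,s) = 1/\vol(\Gamma_{p,q}\backslash\bbH)$. Substituting $\vol(\Gamma_{p,q}\backslash\bbH)=2\pi r/pq$ gives the asserted residue $pq/(2\pi r)$.

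The main substantive input is the meromorphic continuation itself, which is nontrivial spectral analysis and is the reason the statement is quoted from Iwaniec rather than proved from scratch. Everything particular to the triangle group is cosmetic: the cusp width $\lambda$ enters only as an overall normalization, and the numerical value of the residue depends on nothing about $\Gamma_{p,q}$ beyond the covolume $2\pi r/pq$. For the purposes of this paper, therefore, I would present the proof simply as the citation of \cite[Proposition 6.13]{Iwaniec2002GSM} plus the volume formula established earlier.
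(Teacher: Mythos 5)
Your proposal coincides with the paper's treatment: the paper offers no proof of this proposition at all, presenting it as a citation of \cite[Proposition 6.13]{Iwaniec2002GSM} combined with the covolume $\vol(\Gamma_{p,q}\backslash\bbH)=2\pi r/pq$ from Subsection 2.1, which is exactly what you conclude one should do. One caveat about your auxiliary sketch of how the residue is identified: the displayed ``unfolding'' identity is not literally valid. The integral $\int_{\Gamma_{p,q}\backslash\bbH}E_0^{(p,q)}(z,s)\,y^{-2}dxdy$ diverges for $\Re(s)>1$ because of the $y^{s}$ growth of the constant term in the cusp, and the unfolded integral $\int_0^\infty y^{s-2}dy$ converges for no value of $s$ (it fails at $y\to 0$ for $\Re(s)\le 1$ and at $y\to\infty$ for $\Re(s)\ge 1$), so one cannot read off the residue by a naive Rankin--Selberg unfolding against the constant function. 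The standard repair --- and what the cited proof actually does --- is to observe that $u=\Res_{s=1}E_0^{(p,q)}(\cdot,s)$ is bounded (the pole sits in the $\varphi(s)y^{1-s}$ term, not in the entire function $y^s$) and harmonic, hence constant, and then to compute that constant via the Maass--Selberg relation for the truncated Eisenstein series, which yields $\|u\|^2=\Res_{s=1}\varphi(s)=u$ and hence $u=1/\vol(\Gamma_{p,q}\backslash\bbH)$. Since you ultimately defer to the citation for precisely this analytic content, the flaw is confined to the optional sketch and does not affect the proof as you propose to present it.
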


The classical Kronecker limit formula describes the constant term of the Eisenstein series $E_0^{(2,3)}(z,s)$ at $s = 1$. 
Goldstein established a generalization of the limit formula for general Fuchsian groups \cite{Goldstein1973Nagoya}, which yields the following: 
\begin{proposition}[{\cite[(21)]{Goldstein1973Nagoya}}] \label{prop.Kronecker}
	The constant term of the Laurent expansion of $E_0^{(p,q)}(z,s)$ in $s$ at $s=1$, which is also called the limit function, is given by 	
	\begin{align*}
	\mathscr{L}_{p,q}(z)&=\lim_{s \to 1} \big( E_0^{(p,q)} (z,s) - \frac{\,1\,}{\vol(\Gamma_{p,q} \backslash \bbH)} \frac{\,1\,}{s-1} \big) \\
	&=C_{p,q} - \frac{\log y}{\vol(\Gamma_{p,q} \backslash \bbH)}+ \frac{y}{\lambda} + \sum_{n=1}^\infty c_{p,q}(n) q_\lambda^n + \sum_{n=1}^\infty \overline{c_{p,q}(n)} \overline{q_\lambda}^n,
	\end{align*} 
where $C_{p,q}$ and $c_{p,q}(n)$ are complex numbers described in terms of a certain Dirichlet series. 
\end{proposition}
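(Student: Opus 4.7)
The plan is to derive the Fourier expansion of $E_0^{(p,q)}(z,s)$ at the unique cusp $i\infty$ and then read off the Laurent expansion at $s=1$. Since $(\Gamma_{p,q})_\infty=\pm\langle T_{p,q}\rangle$ has scaling matrix $\sigma=\smat{\lambda^{1/2}&0\\0&\lambda^{-1/2}}$, the standard Bruhat-type splitting of $(\Gamma_{p,q})_\infty\backslash \Gamma_{p,q}$ into the identity coset and the cosets with $c\ne 0$, followed by Poisson summation in the upper row, yields an expansion
\[
E_0^{(p,q)}(z,s)=\lambda^{-s}y^s+\varphi(s)\,y^{1-s}+\sum_{n\ne 0}\varphi_n(s)\,y^{1/2}K_{s-1/2}\!\Bigl(\tfrac{2\pi|n|y}{\lambda}\Bigr)e^{2\pi inx/\lambda},
\]
where the scattering coefficient $\varphi(s)$ and the coefficients $\varphi_n(s)$ are Dirichlet series indexed by the lower-row entries of $\Gamma_{p,q}$ (Goldstein's generalization of the classical Kloosterman sum formalism to a non-congruence Fuchsian group of the first kind).

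Next I would invoke Selberg's general meromorphic continuation, which gives the continuation of each coefficient to a neighborhood of $s=1$, and use the standard spectral argument (or Goldstein's direct calculation) to show that $\varphi(s)$ has a simple pole at $s=1$ with residue equal to $1/\vol(\Gamma_{p,q}\backslash\bbH)=pq/(2\pi r)$, while every $\varphi_n(s)$ is holomorphic at $s=1$. Writing $\varphi(s)=\frac{1}{\vol(\Gamma_{p,q}\backslash\bbH)(s-1)}+C_{p,q}'+O(s-1)$ and expanding $y^{1-s}=1-(s-1)\log y+O((s-1)^2)$, the term $\varphi(s)y^{1-s}$ contributes the pole $\frac{1}{(s-1)\vol(\Gamma_{p,q}\backslash\bbH)}$, the logarithmic term $-\log y/\vol(\Gamma_{p,q}\backslash\bbH)$, and a constant $C_{p,q}$, while the elementary term $\lambda^{-s}y^s$ evaluates to $y/\lambda$ at $s=1$.

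For the oscillatory terms, I would use the identity $K_{1/2}(x)=\sqrt{\pi/(2x)}\,e^{-x}$, which gives
\[
y^{1/2}K_{1/2}\!\Bigl(\tfrac{2\pi|n|y}{\lambda}\Bigr)e^{2\pi inx/\lambda}=\tfrac{1}{2}\sqrt{\tfrac{\lambda}{|n|}}\,e^{-2\pi|n|y/\lambda}e^{2\pi inx/\lambda},
\]
which equals $\tfrac{1}{2}\sqrt{\lambda/|n|}\,q_\lambda^{n}$ for $n>0$ and $\tfrac{1}{2}\sqrt{\lambda/|n|}\,\overline{q_\lambda}^{-n}$ for $n<0$. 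Defining $c_{p,q}(n):=\tfrac{1}{2}\sqrt{\lambda/n}\,\varphi_n(1)$ for $n>0$, the reality of $E_0^{(p,q)}(z,s)$ on the real $s$-axis (which forces $\varphi_{-n}(s)=\overline{\varphi_n(\bar s)}$) implies that the $n<0$ coefficient is $\overline{c_{p,q}(-n)}$, yielding exactly the stated form of $\mathscr{L}_{p,q}(z)$.

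The main obstacle is the analytic continuation and pole structure of $\varphi(s)$ for the non-congruence group $\Gamma_{p,q}$, since closed Ramanujan-style expressions for the Dirichlet series $\varphi_n(s)$ are unavailable. This is precisely what Goldstein establishes in \cite{Goldstein1973Nagoya} for any Fuchsian group of the first kind: once his formula (21) is in hand, it specializes to $\Gamma_{p,q}$ verbatim, so for later purposes we only need the shape of $\mathscr{L}_{p,q}(z)$ and not an explicit description of $C_{p,q}$ or $c_{p,q}(n)$.
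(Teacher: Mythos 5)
Your proposal is correct and follows essentially the same route as the paper, which offers no independent proof but simply quotes Goldstein's formula (21); your sketch (cuspidal Fourier expansion, Selberg's continuation, the simple pole of the scattering term $\varphi(s)$ with residue $1/\vol(\Gamma_{p,q}\backslash\bbH)$, and the evaluation $K_{1/2}(x)=\sqrt{\pi/(2x)}\,e^{-x}$) is exactly the standard derivation underlying that citation. The bookkeeping of the constant term, the $-\log y/\vol$ term from expanding $\varphi(s)y^{1-s}$, and the conjugate-symmetry $\varphi_{-n}(s)=\overline{\varphi_n(\bar s)}$ forcing the $\overline{c_{p,q}(n)}\,\overline{q_\lambda}^{\,n}$ terms are all right.
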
 

\subsubsection{A harmonic Maass form of weight 2} 
Let us define a function by $E_2^{(p,q),*}(z) = \xi_0 \mathscr{L}_{p,q}(z)$. Then we have the following. 
\begin{proposition} 
The function $E_2^{(p,q),*}(z)$ is a harmonic Maass form of weight $2$ for $\Gamma_{p,q}$. 
The space $\mathcal{H}_2(\Gamma_{p,q})$ is a $1$-dimensional $\C$-vector space spanned by $E_2^{(p,q),*}(z)$.
\end{proposition}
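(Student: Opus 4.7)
The plan is to verify the three defining conditions of Definition \ref{def-harmonic-Maass} for $E_2^{(p,q),*}(z) = \xi_0 \mathscr{L}_{p,q}(z)$, and then prove one-dimensionality by showing $\xi_2$ embeds $\mathcal{H}_2(\Gamma_{p,q})$ into $\mathcal{M}_0(\Gamma_{p,q}) = \C$ with $E_2^{(p,q),*}$ mapping to a generator.

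First I would handle modular transformation. Since $E_0^{(p,q)}(\cdot,s)|_0 \gamma = E_0^{(p,q)}(\cdot,s)$ for every $s$ with $\Re(s)>1$, the uniqueness of meromorphic continuation gives $\mathscr{L}_{p,q}|_0 \gamma = \mathscr{L}_{p,q}$ (the polar constant $\frac{pq}{2\pi r}$ is $\gamma$-invariant trivially). Applying the intertwining relation $\xi_0(h|_0\gamma) = (\xi_0 h)|_2 \gamma$ yields $E_2^{(p,q),*}|_2 \gamma = E_2^{(p,q),*}$. For harmonicity, I compare the Laurent expansions of the two sides of $\Delta_0 E_0^{(p,q)}(z,s) = s(1-s)E_0^{(p,q)}(z,s)$ near $s=1$. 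Writing $E_0^{(p,q)}(z,s) = \frac{pq/(2\pi r)}{s-1} + \mathscr{L}_{p,q}(z) + O(s-1)$ and matching the constant term in $s-1$ (the $(s-1)^{-1}$ term is automatic since $\frac{pq}{2\pi r}$ is a constant) gives $\Delta_0 \mathscr{L}_{p,q}(z) = -\frac{pq}{2\pi r}$. Since $\Delta_2 = -\xi_0 \circ \xi_2$ and $\xi_2\xi_0 = -\Delta_0$, we obtain $\xi_2 E_2^{(p,q),*} = -\Delta_0 \mathscr{L}_{p,q} = \frac{pq}{2\pi r}$, and then $\Delta_2 E_2^{(p,q),*} = -\xi_0(\text{constant}) = 0$. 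As a sanity check one can verify $\Delta_0(-\frac{\log y}{\operatorname{vol}}) = -\frac{1}{\operatorname{vol}} = -\frac{pq}{2\pi r}$ directly from the Fourier expansion in Proposition \ref{prop.Kronecker}, the other terms being harmonic. For the growth, applying $\xi_0 = 2i\,\overline{\partial_{\bar{z}}(\cdot)}$ termwise to the expansion of $\mathscr{L}_{p,q}$ in Proposition \ref{prop.Kronecker} kills the constant $C_{p,q}$ and the antiholomorphic series, converts $-\frac{\log y}{\operatorname{vol}}$ and $\frac{y}{\lambda}$ into $-\frac{1}{\operatorname{vol}\cdot y}$ and the constant $\frac{1}{\lambda}$, and turns $\sum c_{p,q}(n)q_\lambda^n$ into a bounded holomorphic $q_\lambda$-series. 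Hence $E_2^{(p,q),*}(x+iy) = O(1)$ uniformly in $x$, which trivially satisfies condition (iii).

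For the one-dimensionality, the key observation is that $\xi_2: \mathcal{H}_2(\Gamma_{p,q}) \to \mathcal{M}_0(\Gamma_{p,q})$ is injective. Indeed, $\mathcal{M}_0(\Gamma_{p,q}) = \C$: any $h \in \mathcal{M}_0(\Gamma_{p,q})$ descends to a bounded holomorphic function on the compactification of $\Gamma_{p,q}\backslash\bbH$ (a Riemann sphere, since $\Gamma_{p,q}\backslash\bbH$ has genus $0$), hence is constant. Now suppose $f\in\mathcal{H}_2(\Gamma_{p,q})$ lies in $\ker \xi_2$. Then $\partial_{\bar{z}} f = 0$, so $f$ is holomorphic; combined with weight-$2$ modularity and the polynomial-growth condition (iii), which forces the Fourier coefficients $a_n$ to vanish for all $n<0$ by extracting coefficients via integration over a horizontal period, we conclude $f \in \mathcal{M}_2(\Gamma_{p,q})$, and Lemma \ref{lem.M2=0} gives $f=0$. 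Since $\xi_2 E_2^{(p,q),*} = \frac{pq}{2\pi r} \neq 0$ generates $\mathcal{M}_0(\Gamma_{p,q}) = \C$, the injective map $\xi_2$ has image $\C$, and any $f \in \mathcal{H}_2(\Gamma_{p,q})$ is uniquely a scalar multiple of $E_2^{(p,q),*}$.

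The main obstacle is the Laurent-expansion computation that extracts $\Delta_0\mathscr{L}_{p,q} = -\frac{pq}{2\pi r}$: one must verify that the formal manipulation of the pole at $s=1$ is compatible with Goldstein's explicit limit formula, which amounts to checking consistency between the spectral differential equation and the explicit expansion in Proposition \ref{prop.Kronecker}. All other steps are standard manipulations with the $\xi$-operator and the valence formula consequence $\mathcal{M}_2(\Gamma_{p,q})=0$ already established.
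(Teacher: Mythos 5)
Your proof is correct and follows essentially the same route as the paper: modularity via the intertwining relation $\xi_0(f|_0\gamma)=(\xi_0 f)|_2\gamma$, the growth and the value $\xi_2E_2^{(p,q),*}=\frac{pq}{2\pi r}$ read off from the expansion of $\mathscr{L}_{p,q}$ (your Laurent-expansion derivation of $\Delta_0\mathscr{L}_{p,q}=-\frac{pq}{2\pi r}$ from the spectral equation is a clean alternative to the paper's direct check), and one-dimensionality from $\mathcal{M}_0(\Gamma_{p,q})=\C$ together with $\mathcal{M}_2(\Gamma_{p,q})=0$. One harmless slip: $\xi_0=2i\,\overline{\partial_{\bar z}(\cdot)}$ annihilates the \emph{holomorphic} series $\sum c_{p,q}(n)q_\lambda^n$ and sends the \emph{antiholomorphic} series $\sum\overline{c_{p,q}(n)}\,\overline{q_\lambda}^{\,n}$ to $\sum d_{p,q}(n)q_\lambda^n$, not the other way around; the resulting expansion of $E_2^{(p,q),*}$ and the $O(1)$ bound are unaffected.
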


\begin{proof}
By a direct calculation, we have
\[
	E_2^{(p,q),*} (z) = - \frac{\,1\,}{\vol(\Gamma_{p,q} \backslash \bbH)} \frac{\,1\,}{y} + \frac{\,1\,}{\lambda} + \sum_{n=1}^\infty d_{p,q} (n) q_\lambda^n,
\]
where $d_{p,q} (n) = (-4\pi n/\lambda) c_{p,q} (n)$. Since $\mathscr{L}_{p,q}(z)$ is a $\Gamma_{p,q}$-invariant function, $E_2^{(p,q),*}(z)$ satisfies the modular transformation law of weight $2$. 
The conditions (ii) and (iii) in Definition \ref{def-harmonic-Maass} are easily verified. 
Hence we have $E_2^{(p,q),*}(z) \in \mathcal{H}_2(\Gamma_{p,q})$.

	Let $f \in \mathcal{H}_2(\Gamma_{p,q})$. Since $\mathcal{M}_0(\Gamma_{p,q}) = \C$ by Proposition \ref{valence}, the image $\xi_2 f$ is a constant function. Hence there exists a constant $c \in \C$ such that $\xi_2 (f(z) - c E_2^{(p,q),*}(z)) = 0$, that is, $f(z) - c E_2^{(p,q),*}(z) \in \mathcal{M}_2(\Gamma_{p,q})$. 
	Since $\mathcal{M}_2(\Gamma_{p,q})=0$ by Lemma \ref{lem.M2=0}, we obtain $f(z) -c E_2^{(p,q),*}(z)=0$, completing the proof. 
\end{proof} 

\subsubsection{A mock modular form of weight 2} 
Let $E_2^{(p,q)}(z)$ denote the holomorphic part of the harmonic Maass form $E_2^{(p,q),*}(z)$, so that 
$E_2^{(p,q)}(z)$ is a mock modular form of weight 2 and we have 
\[ 
E_2^{(p,q)}(z) = E_2^{(p,q), *} (z) + \frac{\,1\,}{\mathrm{vol}(\Gamma_{p,q} \backslash \bbH)} \frac{\,1\,}{{\rm Im}(z)} = \frac{\,1\,}{\lambda} + \sum_{n=1}^\infty d_{p,q} (n) q_\lambda^n.
\] 

The modular transformation law of weight 2 for $E_2^{(p,q),*}(z)$ yields the modular gap of the function $E_2^{(p,q)}(z)$ described as follows. 

\begin{lemma} \label{lem.modular-gap} 
	For any $\gamma = \smat{a&b\\c&d} \in \Gamma_{p,q}$, we have
	\begin{align*} 
		(cz+d)^{-2} E_2^{(p,q)} (\gamma z) - E_2^{(p,q)}(z) &= \frac{\,1\,}{\vol(\Gamma_{p,q} \backslash \bbH)} \big( (cz+d)^{-2} \frac{\,1\,}{\Im(\gamma z)} - \frac{\,1\,}{\Im(z)} \big)\\ 
		&= \frac{pq}{r} \frac{c}{\pi i (cz+d)}.
	\end{align*}
\end{lemma}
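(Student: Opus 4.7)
The plan is to reduce the claim to the modular invariance of the harmonic Maass form $E_2^{(p,q),\ast}$ together with a short algebraic computation involving $\Im(\gamma z)=\Im(z)/|j(\gamma,z)|^2$.

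First, I would use the identity
\[
E_2^{(p,q)}(z)=E_2^{(p,q),\ast}(z)+\frac{1}{\vol(\Gamma_{p,q}\backslash\bbH)}\cdot\frac{1}{\Im(z)},
\]
established just before the lemma, to rewrite the left-hand side as
\[
\bigl(E_2^{(p,q),\ast}\big|_2\gamma\bigr)(z)-E_2^{(p,q),\ast}(z)+\frac{1}{\vol(\Gamma_{p,q}\backslash\bbH)}\left((cz+d)^{-2}\frac{1}{\Im(\gamma z)}-\frac{1}{\Im(z)}\right).
\]
Since $E_2^{(p,q),\ast}\in\mathcal{H}_2(\Gamma_{p,q})$ satisfies $E_2^{(p,q),\ast}\big|_2\gamma=E_2^{(p,q),\ast}$, the first two terms cancel, yielding the first claimed equality.

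Next I would carry out the computation of the non-holomorphic contribution. Using $\Im(\gamma z)=\Im(z)/|cz+d|^2$, one has
\[
(cz+d)^{-2}\frac{1}{\Im(\gamma z)}=\frac{|cz+d|^2}{(cz+d)^2\,\Im(z)}=\frac{\overline{cz+d}}{(cz+d)\,\Im(z)},
\]
so that subtracting $1/\Im(z)$ and invoking $\overline{cz+d}-(cz+d)=-2ic\,\Im(z)$ gives
\[
(cz+d)^{-2}\frac{1}{\Im(\gamma z)}-\frac{1}{\Im(z)}=\frac{-2ic}{cz+d}.
\]
Finally, substituting $\vol(\Gamma_{p,q}\backslash\bbH)=2\pi r/pq$ and rewriting $-2i/(2\pi)=1/(\pi i)$ produces the asserted closed form $\dfrac{pq}{r}\cdot\dfrac{c}{\pi i(cz+d)}$.

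There is no real obstacle here: the content is entirely carried by the modular transformation law of $E_2^{(p,q),\ast}$, and the only subtlety is a careful bookkeeping of signs and conjugates in the identity $\overline{cz+d}-(cz+d)=-2ic\,\Im(z)$ together with the simplification $-i/\pi=1/(\pi i)$ needed to match the stated form.
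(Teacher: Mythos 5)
Your proof is correct and is exactly the argument the paper intends: the lemma is stated there without a written proof, being presented as an immediate consequence of the weight-$2$ modular invariance of $E_2^{(p,q),\ast}$, and your computation (cancellation of the $E_2^{(p,q),\ast}$ terms, then $\Im(\gamma z)=\Im(z)/|cz+d|^2$ and $\overline{cz+d}-(cz+d)=-2ic\,\Im(z)$, with $\vol(\Gamma_{p,q}\backslash\bbH)=2\pi r/pq$) fills in precisely those details. No issues.
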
 

This gap will play a crucial role to define a holomorphic cusp form $\Delta_{p,q}(z)$ in the next subsection.


\subsection{The cusp form $\Delta_{p,q}(z)$ of weight {$2pq$}} \label{s2-3} 


In this subsection, we construct a holomorphic cusp form $\Delta_{p,q}(z)$ of weight $2pq$ for $\Gamma_{p,q}$ with no poles and zeros on $\bbH$. 
In the course of argument, we introduce a primitive function $F_{p,q}(z)$, a 1-cocycle function $R_{p,q}(\gamma,z)$, and the Rademacher symbol $\psi_{p,q}:\Gamma_{p,q}\to \Z$ as well. 

\subsubsection{A 1-cocycle function} 
Let $F_{p,q}(z)$ denote the primitive function of $2\pi i r E_2^{(p,q)}(z)$ defined by 
\[ F_{p,q}(z) = \frac{2\pi i rz}{\lambda} + r \lambda \sum_{n=1}^\infty \frac{d_{p,q} (n)}{n} q_\lambda^n = \frac{2\pi i rz}{\lambda} - 4\pi r \sum_{n=1}^\infty c_{p,q} (n) q_\lambda^n,
\] 
where $c_{p,q}(n)$ are those in Proposition \ref{prop.Kronecker}. 
This $F_{p,q}$ is the regularized primitive function in the sense that the leading coefficient of the Fourier expansion of $\Delta_{p,q}(z)=\exp F_{p,q}(z)$ is 1. 

In addition, let  $R_{p,q} : \Gamma_{p,q} \times \bbH \to \C$ denote the weight 0 modular gap function of $F_{p,q}(z)$ defined by 
\[ R_{p,q}(\gamma, z) = F_{p,q}(\gamma z) - F_{p,q}(z), \] 
Then we have $R_{p,q}(-\gamma, z) = R_{p,q}(\gamma, z)$ and the 1-cocycle relation 
\[ R_{p,q}(\gamma_1 \gamma_2, z) = R_{p,q}(\gamma_1, \gamma_2 z) + R_{p,q}(\gamma_2, z). \] 

\subsubsection{The Rademacher symbol $\psi_{p,q}(\gamma)$}
By Lemma \ref{lem.modular-gap}, we have $\dfrac{d}{dz}(R_{p,q}(\gamma, z) - 2pq \log j(\gamma, z))=0$. 
Hence there exists a function $\psi_{p,q} : \Gamma_{p,q} \to \C$ satisfying 
\[ R_{p,q}(\gamma, z) = 2pq \log j(\gamma, z) + 2\pi i \psi_{p,q}(\gamma),\] 
where we assume that $\Im \log j(\gamma, z) \in [-\pi, \pi)$. We call this $\psi_{p,q}$ \emph{the Rademacher symbol for} $\Gamma_{p,q}$. 
Let us verify that $\psi_{p,q}(\gamma)\in \Z$. 
\begin{lemma} \label{psi-generators} 
For the elements $T_{p,q}, S_p, U_q$ of $\Gamma_{p,q}$ $($cf. Definition \ref{Gpq-generator}$)$, we have $\psi_{p,q}(T_{p,q}) = r$, $\psi_{p,q}(S_p) = -q$, and $\psi_{p,q}(U_q) = -p$. 
\end{lemma}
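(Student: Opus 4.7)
The plan is to exploit the fact that $S_p$ fixes the elliptic point $a = e^{\pi i(1-1/p)}$ and $U_q$ fixes $b = e^{\pi i/q}$, so that evaluating the cocycle function $R_{p,q}(\gamma,z)$ at these fixed points forces its left-hand side $F_{p,q}(\gamma z)-F_{p,q}(z)$ to vanish; the computation of $\psi_{p,q}$ then reduces to evaluating the logarithm of the automorphic factor at the fixed point. The case of $T_{p,q}$ is handled separately via the Fourier expansion.

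First I would dispatch $T_{p,q}=\smat{1&\lambda\\0&1}$. Since $j(T_{p,q},z)=1$, the branch term $2pq\log j(T_{p,q},z)$ is zero. The explicit expansion
\[
F_{p,q}(z)=\frac{2\pi ir z}{\lambda}-4\pi r\sum_{n=1}^{\infty}c_{p,q}(n)q_{\lambda}^{\,n}
\]
shows that the non-linear part is invariant under $z\mapsto z+\lambda$ (as $q_\lambda = e^{2\pi iz/\lambda}$), so $R_{p,q}(T_{p,q},z)=F_{p,q}(z+\lambda)-F_{p,q}(z)=2\pi ir$, yielding $\psi_{p,q}(T_{p,q})=r$.

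Next I would handle $S_p$ and $U_q$ uniformly. For an elliptic element $\gamma\in\Gamma_{p,q}$ fixing $z_0\in\bbH$, the cocycle identity gives
\[
0=F_{p,q}(\gamma z_0)-F_{p,q}(z_0)=R_{p,q}(\gamma,z_0)=2pq\log j(\gamma,z_0)+2\pi i\,\psi_{p,q}(\gamma),
\]
hence $\psi_{p,q}(\gamma)=-\tfrac{pq}{\pi i}\log j(\gamma,z_0)$. For $S_p=\smat{0&-1\\1&2\cos(\pi/p)}$ with fixed point $a=-\cos(\pi/p)+i\sin(\pi/p)$, a direct calculation gives
\[
j(S_p,a)=a+2\cos(\pi/p)=\cos(\pi/p)+i\sin(\pi/p)=e^{\pi i/p},
\]
so $\log j(S_p,a)=\pi i/p$ (the imaginary part $\pi/p$ lies in $[-\pi,\pi)$, matching our branch convention), giving $\psi_{p,q}(S_p)=-q$. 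Similarly for $U_q=\smat{2\cos(\pi/q)&-1\\1&0}$ with fixed point $b=e^{\pi i/q}$, we get $j(U_q,b)=b=e^{\pi i/q}$ and hence $\psi_{p,q}(U_q)=-p$.

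The main things to verify carefully are the branch issues and that both sides of the cocycle identity make sense at the fixed points: since $a,b\in\bbH$ (not on the boundary) and the Fourier series defining $F_{p,q}$ converges absolutely on $\bbH$, the evaluation $F_{p,q}(z_0)$ is well defined, and the chosen branch $\Im\log j(\gamma,z)\in[-\pi,\pi)$ is consistent with the values $\pi/p$ and $\pi/q$ computed above. Finally, this simultaneously confirms integrality of $\psi_{p,q}$ on the generators $S_p,U_q,T_{p,q}$, and by the cocycle relation $\psi_{p,q}(\gamma_1\gamma_2)-\psi_{p,q}(\gamma_1)-\psi_{p,q}(\gamma_2)=\tfrac{pq}{\pi i}(\log j(\gamma_1,\gamma_2 z)+\log j(\gamma_2,z)-\log j(\gamma_1\gamma_2,z))\in\Z$ (a bounded integer-valued quantity, as in item (3) of Theorem \ref{Rad-omnibus-theorem1}), one obtains $\psi_{p,q}(\Gamma_{p,q})\subset\Z$ on the entire group.
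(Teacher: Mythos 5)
Your proof is correct, but it takes a genuinely different route from the paper for the elliptic generators. For $T_{p,q}$ your computation is identical to the paper's. For $S_p$ and $U_q$, however, the paper exploits the torsion relations $S_p^{\,p}=U_q^{\,q}=-I$: it sums the cocycle $R_{p,q}$ over the orbit $\{U_q^{\,k}z\}_{k=0}^{q-1}$, observes that the moduli $\sum_k\log|j(U_q,U_q^{\,k}z)|$ cancel, and then evaluates the remaining sum of arguments in the limit $z\to i\infty$ using the Chebyshev-polynomial formulas of Lemma \ref{Chebyshev}. You instead evaluate the defining identity at the fixed point $z_0$ of the elliptic element, where $R_{p,q}(\gamma,z_0)=F_{p,q}(\gamma z_0)-F_{p,q}(z_0)=0$ trivially, so that $\psi_{p,q}(\gamma)=-\tfrac{pq}{\pi i}\log j(\gamma,z_0)$ and a one-line computation of $j(S_p,a)=e^{\pi i/p}$, $j(U_q,b)=e^{\pi i/q}$ finishes the job. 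Both arguments rest on the same prior fact (established when $\psi_{p,q}$ is defined) that $R_{p,q}(\gamma,z)-2pq\log j(\gamma,z)$ is independent of $z$, which is what licenses evaluating at a single convenient point; your branch check that $\pi/p,\pi/q\in[-\pi,\pi)$ is the right thing to verify and is correct. Your approach is shorter and avoids the limit-and-continuity argument entirely; the paper's approach has the mild advantage of never needing the explicit fixed point, only the group relation and the asymptotics of $U_q^{\,k}z$ as $z\to i\infty$. Your closing remark on integrality is also sound: the defect $\psi_{p,q}(\gamma_1\gamma_2)-\psi_{p,q}(\gamma_1)-\psi_{p,q}(\gamma_2)$ equals $2pq\,W(\gamma_1,\gamma_2)\in\Z$, so integrality on the generators propagates to all of $\Gamma_{p,q}$, exactly as in Proposition \ref{prop.psi-integer-value}.
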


\begin{proof}
Let us first show that $\psi_{p,q}(U_q) = -p$. By the fact that $U_q\!^q = -I$, for any $z\in \bbH$, we have
\[ 0 = R_{p,q}(-I, z) = R_{p,q}(U_q\!^q, z) = \sum_{k=0}^{q-1} R_{p,q}(U_q, U_q\!^k z) = 2pq \sum_{k=0}^{q-1} \log j(U_q, U_q\!^k z) + 2\pi i q \psi_{p,q}(U_q).\]
Since $\displaystyle \sum_{k=0}^{q-1} \log |j(U_q, U_q\!^k z)| = \log |j(-I,z)| = 0$, we see that
\[ \psi_{p,q}(U_q) = -\frac{p}{\pi} \sum_{k=0}^{q-1} \arg j(U_q, U_q\!^k z) = -\frac{p}{\pi} \sum_{k=0}^{q-1} \arg U_q\!^k z, \]
where $\arg z \in [-\pi, \pi)$. Here the left-hand side is independent of the choice of $z$, and the right-hand side is continuous in $z \in \bbH$. By taking the limit $z \to i\infty$ and applying Lemma \ref{Chebyshev}, we obtain $\psi_{p,q}(U_q) = -p$. 
In a similar way, we may obtain $\psi_{p,q}(S_p) = -q$. 
	
Finally, by 
\[ 2\pi i\psi_{p,q}(T_{p,q}) = R_{p,q}(T_{p,q},z) = F_{p,q}(z+\lambda) - F_{p,q}(z) = \dfrac{2\pi ir}{\lambda} ((z+\lambda) - z) = 2\pi ir,\] 
we obtain $\psi_{p,q}(T_{p,q}) = r.$ 
\end{proof}

\begin{proposition} \label{prop.psi-integer-value} 
For any $\gamma \in \Gamma_{p,q}$, the value $\psi_{p,q}(\gamma)$ is an integer.
\end{proposition}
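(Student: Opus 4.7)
The plan is to induct on the word length of $\gamma$ as a product of the generators $S_p, U_q$ of $\Gamma_{p,q}$ (Definition \ref{Gpq-generator}), using the 1-cocycle relation for $R_{p,q}$ together with the cocycle $j(\gamma_1\gamma_2,z)=j(\gamma_1,\gamma_2 z)j(\gamma_2,z)$.

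Substituting the defining identity $R_{p,q}(\gamma,z)=2pq\log j(\gamma,z)+2\pi i\psi_{p,q}(\gamma)$ into the 1-cocycle relation $R_{p,q}(\gamma_1\gamma_2,z)=R_{p,q}(\gamma_1,\gamma_2 z)+R_{p,q}(\gamma_2,z)$ yields
\[
2\pi i\bigl(\psi_{p,q}(\gamma_1\gamma_2)-\psi_{p,q}(\gamma_1)-\psi_{p,q}(\gamma_2)\bigr)=2pq\bigl(\log j(\gamma_1,\gamma_2 z)+\log j(\gamma_2,z)-\log j(\gamma_1\gamma_2,z)\bigr).
\]
Since the three logarithms are taken with imaginary part in $[-\pi,\pi)$ of complex numbers whose product satisfies $j(\gamma_1,\gamma_2 z)j(\gamma_2,z)=j(\gamma_1\gamma_2,z)$, the right-hand parenthesis equals $2\pi i$ times an integer in $\{-1,0,1\}$ (this is the bounded 2-cocycle $W$ that will be revisited in Section~\ref{s3}). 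Consequently
\[
\psi_{p,q}(\gamma_1\gamma_2)-\psi_{p,q}(\gamma_1)-\psi_{p,q}(\gamma_2)\in 2pq\,\Z\subset\Z,
\]
so integrality of $\psi_{p,q}$ is preserved under multiplication, and, by applying the same relation to $\gamma_1\gamma_2=I$ together with $\psi_{p,q}(I)=0$, also under inverses.

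By Definition \ref{Gpq-generator} every $\gamma\in\Gamma_{p,q}$ is a finite word in $S_p$ and $U_q$, and Lemma \ref{psi-generators} provides the integer base values $\psi_{p,q}(S_p)=-q$ and $\psi_{p,q}(U_q)=-p$. A straightforward induction on word length then concludes the proof. The only delicate point is the branch-cut bookkeeping for $\log j$, which is handled uniformly by the observation that a multivalued logarithmic cocycle defect is automatically an integer multiple of $2\pi i$; no substantive obstacle arises.
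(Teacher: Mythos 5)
Your proof is correct and follows essentially the same route as the paper: induction on the word length in $S_p$ and $U_q$, with the key observation that the branch-cut defect $\log j(\gamma_1,\gamma_2 z)+\log j(\gamma_2,z)-\log j(\gamma_1\gamma_2,z)$ lies in $2\pi i\Z$. Your extra remarks (that this defect is the $2$-cocycle $W$ valued in $\{-1,0,1\}$, and the handling of inverses) anticipate material the paper develops in Section~\ref{s3-2} but change nothing essential.
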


\begin{proof} 
We prove the assertion by induction on the word length of $\gamma \in \Gamma_{p,q}$ with respect to the generators $S_p$ and $U_q$. 
We proved in Lemma \ref{psi-generators} that $\psi_{p,q}(S_p), \psi_{p,q}(U_q) \in \Z$. 
Now suppose that $\psi_{p,q}(\gamma) \in \Z$. If $w \in \{S_p, U_q\}$, then we see that
\begin{align*}
2\pi i \psi_{p,q}(w \gamma) &= R_{p,q}(w \gamma, z) - 2pq \log j(w \gamma, z)\\
&= R_{p,q}(w, \gamma z) + R_{p,q}(\gamma, z) - 2pq \log j(w, \gamma z) j(\gamma, z).
\end{align*}
By $\log j(w, \gamma z) + \log j(\gamma, z) - \log j(w, \gamma z) j(\gamma, z) \in 2\pi i \Z$, we obtain $\psi_{p,q}(w \gamma) \in \Z$. 
\end{proof}

\subsubsection{A cusp form of weight $2pq$}

Finally, we define a holomorphic function on $\bbH$ by  $\Delta_{p,q}(z) = \exp F_{p,q}(z)$.
By Proposition \ref{prop.psi-integer-value}, for any $\gamma \in \Gamma_{p,q}$, we have
\[
	\Delta_{p,q}(\gamma z) = \Delta_{p,q}(z) \exp {R_{p,q}(\gamma, z)} = j(\gamma, z)^{2pq} \Delta_{p,q}(z),
\]
that is, $\Delta_{p,q}|_{2pq} \gamma = \Delta_{p,q}$ holds. By the definition, $\Delta_{p,q}(z)$ is holomorphic, and has no zeros and poles on the upper-half plane $\bbH$. Moreover, by Proposition \ref{valence}, the function vanishes at the cusp $i\infty$. 
Therefore, by the construction, we have the following. 
\begin{proposition} The function $\Delta_{p,q}(z)$ is a cusp form of weight $2pq$ with a unique zero of order $r$ at the cusp $i\infty$, having a Fourier expansion of the form $\Delta_{p,q}(z) = q_\lambda^r + O(q_\lambda^{r+1})$. In addition, we have \[ \frac{d}{dz} \log \Delta_{p,q}(z) = F'_{p,q}(z) = 2\pi i r E_2^{(p,q)}(z). \] 
\end{proposition}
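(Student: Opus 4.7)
The plan is to simply assemble the three ingredients already established: the defining identity $F_{p,q}(\gamma z) - F_{p,q}(z) = R_{p,q}(\gamma, z)$, the decomposition $R_{p,q}(\gamma, z) = 2pq \log j(\gamma, z) + 2\pi i \psi_{p,q}(\gamma)$ coming from Lemma \ref{lem.modular-gap}, and the integrality $\psi_{p,q}(\gamma) \in \Z$ from Proposition \ref{prop.psi-integer-value}. Holomorphy and non-vanishing on $\bbH$ are automatic from the definition $\Delta_{p,q}(z) = \exp F_{p,q}(z)$ and the fact that $F_{p,q}$ is a holomorphic function on $\bbH$ (its $q_\lambda$-expansion converges on the whole upper half-plane since it is a primitive of $2\pi i r E_2^{(p,q)}$).

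First, I would verify the modular transformation law of weight $2pq$. For any $\gamma \in \Gamma_{p,q}$ and $z \in \bbH$,
\[ \Delta_{p,q}(\gamma z) = \exp F_{p,q}(\gamma z) = \Delta_{p,q}(z) \cdot \exp R_{p,q}(\gamma, z), \]
and exponentiating $R_{p,q}(\gamma, z) = 2pq \log j(\gamma, z) + 2\pi i \psi_{p,q}(\gamma)$ gives $\exp R_{p,q}(\gamma, z) = j(\gamma, z)^{2pq} \cdot e^{2\pi i \psi_{p,q}(\gamma)} = j(\gamma, z)^{2pq}$, since $\psi_{p,q}(\gamma) \in \Z$. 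Thus $\Delta_{p,q}|_{2pq}\gamma = \Delta_{p,q}$.

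Second, I would read off the Fourier expansion directly from the explicit form of $F_{p,q}$. The leading term $\frac{2\pi i r z}{\lambda}$ equals $\log q_\lambda^{\,r}$, and the remainder $r\lambda \sum_{n=1}^\infty \frac{d_{p,q}(n)}{n} q_\lambda^n$ is $O(q_\lambda)$, so
\[ \Delta_{p,q}(z) = q_\lambda^{\,r} \cdot \exp\Big( r\lambda \sum_{n=1}^\infty \tfrac{d_{p,q}(n)}{n} q_\lambda^n \Big) = q_\lambda^{\,r} + O(q_\lambda^{\,r+1}). \]
In particular $v_\infty(\Delta_{p,q}) = r \geq 1$, so $\Delta_{p,q}$ is a cusp form, and since $\Delta_{p,q}$ is the exponential of a holomorphic function it has no zeros on $\bbH$; the cusp is therefore the unique zero and its order is exactly $r$. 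One may cross-check this with the valence formula in Proposition \ref{valence}, which demands total weighted zero count equal to $\frac{r}{2pq} \cdot 2pq = r$.

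Finally, the derivative identity is immediate: by construction $F_{p,q}$ is the regularized primitive of $2\pi i r E_2^{(p,q)}$, so differentiating the $q_\lambda$-series term by term gives $F'_{p,q}(z) = 2\pi i r E_2^{(p,q)}(z)$, and $\frac{d}{dz} \log \Delta_{p,q}(z) = F'_{p,q}(z)$ follows from $\Delta_{p,q} = \exp F_{p,q}$. There is no serious obstacle in this proposition itself; all the genuine analytic work — the construction of $E_2^{(p,q),*}$, the modular gap computation, and the integrality of $\psi_{p,q}$ — has already been done, and this statement is the clean packaging of those ingredients into a classical-looking cusp form.
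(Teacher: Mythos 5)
Your proposal is correct and follows essentially the same route as the paper: exponentiate the identity $R_{p,q}(\gamma,z) = 2pq\log j(\gamma,z) + 2\pi i\psi_{p,q}(\gamma)$ using the integrality of $\psi_{p,q}$ to get the weight-$2pq$ transformation law, note that $\exp F_{p,q}$ is holomorphic and nonvanishing on $\bbH$, and read the order-$r$ vanishing at $i\infty$ off the leading term $2\pi i r z/\lambda$ of $F_{p,q}$ (the paper phrases this step via the valence formula, which is the same cross-check you include). The derivative identity is, as you say, immediate from the construction of $F_{p,q}$ as a primitive of $2\pi i r E_2^{(p,q)}$.
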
 

\begin{remark} For the function $\log \eta_{\Gamma_{p,q}, i}^{} (z/\lambda)$ introduced in \cite[Theorem 3.1]{Goldstein1973Nagoya}, we have $F_{p,q}(z)=4pq \log \eta_{\Gamma_{p,q}, i}^{} (z/\lambda)$ and $\Delta_{p,q}(z) = \eta_{\Gamma_{p,q}, i}^{} (z/\lambda)^{4pq}$. 
However, our $\psi_{p,q}(\gamma)$ and the generalized Dedekind sum $S_{\Gamma_{p,q}, i}^{} (\gamma)$ in \cite{Goldstein1973Nagoya} are slightly different, due to their choices of branches of the logarithm. 
\end{remark}

In terms of our cusp form $\Delta_{p,q}(z)$, the Kronecker limit type formula in Proposition \ref{prop.Kronecker} is paraphrased as follows.
\[ \mathscr{L}_{p,q}(z) = \lim_{s \to 1} \big( E_0^{(p,q)} (z,s) - \frac{\,1\,}{\vol(\Gamma_{p,q} \backslash \bbH)} \frac{\,1\,}{s-1} \big) = -\frac{\,1\,}{\mathrm{vol}(\Gamma_{p,q} \backslash \bbH)} \log (y|\Delta_{p,q}(z)|^{1/pq}) + C_{p,q}. \]

\begin{remark}
	The limit function $\mathscr{L}_{p,q}(z)$ is an example of polyharmonic Maass forms, that were recently introduced by Lagarias--Rhoades in \cite{LagariasRhoades2016} as a generalization of harmonic Maass forms. 
	A real analytic function $f: \bbH \to \C$ is called a polyharmonic Maass form of weight $k \in \Z$ and depth $r \in \Z$ for $\Gamma_{p,q}$ if it satisfies the conditions (i) and (iii) in Definition \ref{def-harmonic-Maass} and (ii)' $(\Delta_k)^r f(z) = 0$. In fact, the function $\mathscr{L}_{p,q}(z)$ satisfies the above three conditions with $k=0$ and $r=2$. For further studies on polyharmonic Maass forms, we refer to \cite{Matsusaka2019} and \cite{Matsusaka2020Ramanujan} written by the first author. 
\end{remark}


\section{The Rademacher symbols} \label{s3}


In the previous section, we introduced the Rademacher symbol $\psi_{p,q}: \Gamma_{p,q} \to \Z$ by using a certain 1-cocycle function $R_{p,q}(\gamma,z)$. 
Let us briefly recall the definition. 
The harmonic Maass form $E_2^{(p,q),*}(z)$ yields the mock modular form $E_2^{(p,q)}(z)$. 
We defined the regularized primitive function $F_{p,q}(z)$ of $2\pi i r E_2^{(p,q)}(z)$ and the cusp form $\Delta_{p,q}(z)$ so that $\Delta_{p,q}(z)=\exp F_{p,q}(z)=q_\lambda^r+O(q_\lambda^{r+1})$ hold for $\lambda=2(\cos\frac{\pi}{p}+\cos \frac{\pi}{q})$. 
We further put $\log \Delta_{p,q}(z)=F_{p,q}(z)$. 
Our symbol $\psi_{p,q}$ may be defined as follows, assuming that $\Im \log z \in [-\pi, \pi)$. 
\begin{definition} \label{def.psi}
The Rademacher symbol $\psi_{p,q}:\Gamma_{p,q}\to \Z$ is a unique function satisfying 
\[ R_{p,q}(\gamma, z) = \log \Delta_{p,q}(\gamma z) - \log \Delta_{p,q}(z) = 2pq \log j(\gamma, z) + 2\pi i \psi_{p,q}(\gamma). \]
\end{definition} 

Since the classical case $\psi_{2,3}$ admits many characterizations as Atiyah and Ghys proved, we may expect that $\psi_{p,q}$ also has many. 
In this section, we establish characterization theorems of $\psi_{p,q}$ from three aspects; cycle integrals of $E_2^{(p,q),*}(z)$, a 2-cocycle $W$ generating the bounded cohomology group $H_b^2(\SL_2 \R; \R)$, and an additive character $\chi_{p,q}:\widetilde{\Gamma}_{p,q} \to \Z$. 
In addition, we introduce several variants $\Phi_{p,q}$, $\Psi_{p,q}$, and $\Psi_{p,q}^{\rm h}$ in a view of the classical cases. 
We obtain several lemmas for our main theorem on the linking number through this section. 

\subsection{Cycle integrals}\label{s3-1}


The group $\Gamma_{p,q}$ acts on $\R\cup\{i\infty\}=\partial \bbH$ via the M\"obius transformation. 
Let $\gamma \in \Gamma_{p,q}$ be a hyperbolic element, that is, $|\tr \gamma|>2$ holds. 
Then, there are exactly two fixed points $w_\gamma, w_\gamma'$ on $\R\subset \R\cup\{i\infty\}$. 
Assume $w_\gamma>w_\gamma'$ and put 
$M_\gamma= \frac{1}{\sqrt{w_\gamma - w'_\gamma}} \smat{w_\gamma & w'_\gamma \\ 1 & 1} \in \SL_2 \R$. 
Then $\gamma$ is diagonalized as 
\[ M_\gamma^{-1} \gamma M_\gamma = \spmx{j(\gamma, w_\gamma) & 0 \\ 0 & j(\gamma, w'_\gamma)} = \spmx{\xi_\gamma & 0 \\ 0 & \xi_\gamma^{-1}}. \] 

Now suppose that $\gamma=\smat{a&b\\c&d} \in \Gamma_{p,q}$ is an element with $a+d>2$ and $c>0$, so that $\xi_\gamma>1$ holds. 
Let $S_\gamma$ denote the geodesic in $\bbH$ connecting two fixed points $w_\gamma$ and $w_\gamma'$. 
Then the action of $\gamma$ preserves the set $S_\gamma$ and sends every point on $S_\gamma$ toward $w_\gamma$. 
The image $\ol{S}_\gamma$ of $S_\gamma$ on the Riemann surface (orbifold) $\Gamma_{p,q}\backslash \bbH$ is a closed geodesic. 
If in addition $\gamma$ is primitive, then the arc on $S_\gamma$ connecting any $z_0 \in S_\gamma$ and $\gamma z_0$ is a lift of the simple closed geodesic $\ol{S}_\gamma$. 
\begin{theorem} \label{thm.cycleintegral}
Let $\gamma=\smat{a&b\\c&d}\in \Gamma_{p,q}$ be a primitive element with $a+d>2$ and $c>0$. Then the cycle integral is given by the Rademacher symbol as 
\[ \int_{\ol{S}_\gamma} E_2^{(p,q),*}(z)dz = \dfrac{1}{r}\psi_{p,q}(\gamma). \] 
\end{theorem}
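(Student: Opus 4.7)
The plan is to decompose the integrand via the relation $E_2^{(p,q),*}(z) = E_2^{(p,q)}(z) - \frac{pq}{2\pi r}\cdot\frac{1}{\Im(z)}$ derived in Section~\ref{s2} (using $\vol(\Gamma_{p,q}\backslash\bbH) = 2\pi r/pq$). Since $E_2^{(p,q),*}(z)\,dz$ descends to a $1$-form on $\Gamma_{p,q}\backslash\bbH$ by the modular transformation law of weight $2$, and $\gamma$ is primitive, the cycle $\ol{S}_\gamma$ lifts to the arc of $S_\gamma$ joining any chosen $z_0 \in S_\gamma$ to $\gamma z_0$. Using $F_{p,q}'(z) = 2\pi i r E_2^{(p,q)}(z)$ together with Definition~\ref{def.psi}, the holomorphic piece contributes
\[
\int_{z_0}^{\gamma z_0} E_2^{(p,q)}(z)\,dz = \frac{R_{p,q}(\gamma, z_0)}{2\pi i r} = -\frac{pq i}{\pi r}\log j(\gamma, z_0) + \frac{\psi_{p,q}(\gamma)}{r},
\]
so the theorem reduces to the geometric identity
\[
\int_{z_0}^{\gamma z_0}\frac{dz}{\Im(z)} = -2i\log j(\gamma, z_0)
\]
along $S_\gamma$, which makes the $\log j$ contributions cancel exactly against the prefactor $-\frac{pq}{2\pi r}$.

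To prove this identity I would parametrize $S_\gamma$ by $z(\theta) = \tfrac{1}{2}(w_\gamma + w_\gamma') + \tfrac{1}{2}(w_\gamma - w_\gamma')e^{i\theta}$ for $\theta \in (0,\pi)$, which yields $\frac{dz}{\Im(z)} = -d\theta + i\,d\log\sin\theta$. Writing $z_0 = z(\theta_0)$ and $\gamma z_0 = z(\theta_1)$, the imaginary part integrates to $i\log(\sin\theta_1/\sin\theta_0) = i\log(\Im(\gamma z_0)/\Im(z_0)) = -2i\log|j(\gamma,z_0)|$ by the standard transformation of $\Im$. For the real part $\theta_0 - \theta_1$, applying $\gamma$ to a tangent vector of $S_\gamma$ at $z_0$ and using $\arg\gamma'(z_0) = -2\arg j(\gamma,z_0)$ yields $\theta_0 - \theta_1 \equiv 2\arg j(\gamma, z_0) \pmod{2\pi}$. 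Combining real and imaginary parts gives the claimed identity modulo $2\pi i \Z$.

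The main obstacle is to fix the branch of $\log$ so as to eliminate the $2\pi i \Z$ ambiguity and match the convention $\Im\log \in [-\pi,\pi)$ used in Definition~\ref{def.psi}. I would resolve this via the diagonalization $z_0 = M_\gamma(it)$ with $t > 0$: a direct computation gives $j(\gamma, z_0) = (\xi_\gamma^2 it + 1)/(\xi_\gamma(1+it))$, so $\arg j(\gamma, z_0) = \arctan(\xi_\gamma^2 t) - \arctan(t) \in (0, \pi/2)$ lies strictly inside the principal branch for all $t > 0$ and $\xi_\gamma > 1$. On the geometric side, $z(\theta) = M_\gamma(it)$ forces $\theta = \pi - 2\arctan(t)$, hence $\theta_0 - \theta_1 = 2\arctan(\xi_\gamma^2 t) - 2\arctan(t) = 2\arg j(\gamma, z_0)$ as an equality on the nose, without any $2\pi$ ambiguity, completing the proof.
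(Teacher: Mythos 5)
Your proof is correct and follows essentially the same route as the paper: the same decomposition of $E_2^{(p,q),*}$ into holomorphic and non-holomorphic parts, with the holomorphic piece evaluated via $F_{p,q}$ and Definition~\ref{def.psi}, and the non-holomorphic piece reduced to the identity $\int_{z_0}^{\gamma z_0} dz/\Im(z) = -2i\log j(\gamma,z_0)$ along $S_\gamma$. The only real difference is that you parametrize the geodesic by angle rather than by $z = M_\gamma iy$ as the paper does, and your explicit verification that $\arg j(\gamma, M_\gamma it) = \arctan(\xi_\gamma^2 t) - \arctan(t) \in (0,\pi/2)$ pins down the branch of the logarithm more carefully than the paper's computation.
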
 
\begin{proof} 
For any $z_0 \in S_\gamma$, the cycle integral coincides with the path integral along $S_\gamma$ on $\bbH$ as 
\[ \int_{\ol{S}_\gamma} E_2^{(p,q),*}(z)dz = \int_{z_0}^{\gamma z_0} E_2^{(p,q),*}(z)dz. \] 
We let $z_0 = M_\gamma i$. Recall that the harmonic Maass form $E_2^{(p,q),*}(z)$ may be written the sum of holomorphic and non-holomorphic parts as 
\[ E_2^{(p,q),*}(z) = E_2^{(p,q)}(z) - \frac{\,1\,}{\mathrm{vol}(\Gamma_{p,q} \backslash \bbH)} \frac{\,1\,}{{\rm Im}(z)}. \]
The integration of the holomorphic part is given by 
\[ \int_{z_0}^{\gamma z_0} E_2^{(p,q)} (z) dz = \frac{\,1\,}{2\pi i r} \bigg[ F_{p,q}(z) \bigg]_{z_0}^{\gamma z_0} = \frac{\,1\,}{2\pi i r} R_{p,q}(\gamma, z_0). \] 
As for the integration of the non-holomorphic part, recall that ${\rm vol}(\Gamma_{p,q} \backslash \bbH)=\frac{2\pi r}{pq}$. 
In addition, by changing variables via $z = M_\gamma iy$, we obtain 
\begin{align*} 
&\int_{z_0}^{\gamma z_0} \frac{dz}{\Im(z)} 
= \int_1^{\xi_\gamma^2} \frac{\,1\,}{{\Im}(M_\gamma iy)} \frac{i dy}{j(M_\gamma, iy)^2} 
= \int_1^{\xi_\gamma^2} \frac{1-iy}{1+iy} \frac{i dy}{y}\\ 
&= \int_1^{\xi_\gamma^2} \big(\frac{\,i\,}{y} + \frac{2}{1+iy} \big) dy 
= 2i \log \frac{\xi_\gamma j(M_\gamma, i)}{j(M_\gamma, i\xi_\gamma^2)} 
= 2i \log \frac{j(M_\gamma, i)}{j(M_\gamma \spmx{\xi_\gamma & 0 \\ 0 & \xi_\gamma^{-1}}, i)}\\ 
&= 2i \log \frac{j(M_\gamma, i)}{j(\gamma M_\gamma, i)} 
= -2i \log j(\gamma, M_\gamma i), 
\end{align*} 
where we assume that $\Im \log z \in [-\pi, \pi)$ as before. 
By summation, we have 
\[ \int_{z_0}^{\gamma z_0} E_2^{(p,q),*}(z) dz 
= \frac{\,1\,}{2\pi i r} \big(R_{p,q}(\gamma, z_0) - 2pq \log j(\gamma, z_0) \big) 
= \frac{\,1\,}{r} \psi_{p,q}(\gamma), \] 
which finishes the proof.
\end{proof}


\subsection{The 2-cocycle $W$} 
\label{s3-2}


In this subsection, we give an alternative definition of the Rademacher symbol $\psi_{p,q}$ without use of automorphic forms. 
We introduce a bounded 2-cocycle $W$ following Asai \cite{Asai1970NMJ37} and 
prove that $\psi_{p,q}$ is a unique function satisfying $2pqW=-\delta^1\psi_{p,q}$.

\subsubsection{A 2-cocycle and Asai's sign function} 
Here we introduce a 2-cocycle $W$ corresponding to the universal covering group $\wt{\SL_2}\R$ together with an explicit description with use of Asai's sign function. 

\begin{definition} \label{def.W} 
We define a 2-cocycle $W:\SL_2\R\times \SL_2\R\to \Z$ by 
\[W(\gamma_1, \gamma_2) = \frac{\,1\,}{2\pi i} \big( \log j(\gamma_1, \gamma_2 z) + \log j(\gamma_2, z) - \log j(\gamma_1 \gamma_2, z) \big),\]
assuming $\arg j(\gamma, z) = \Im \log j(\gamma, z) \in [-\pi, \pi)$.
\end{definition} 
This is equivalent to say that we have $j(\gamma_1, \gamma_2 z) j(\gamma_2, z) = j(\gamma_1 \gamma_2, z) e^{2\pi i W(\gamma_1, \gamma_2)}$ in  the universal covering group $\widetilde{\C^\times}$ of the multiplicative group $\C^\times = \C - \{0\}$. 
Since the right-hand side of the definition is continuous in $z$, the value of $W$ is independent of $z$. 
We may easily verify the 2-cocycle condition 
\[ W(\gamma_1 \gamma_2, \gamma_3) + W(\gamma_1, \gamma_2) = W(\gamma_1, \gamma_2 \gamma_3) + W(\gamma_2, \gamma_3). \] 

The universal cover $\wt{\SL_2}\R \to \SL_2\R$ as manifolds is a group homomorphism as well. The group $\wt{\SL_2}\R$ is called the universal covering group of $\SL_2\R$. 
Note in addition that each central extension of $\SL_2\R$ by $\Z$ corresponds to a 2-cocycle $\SL_2\R\times \SL_2\R \to \Z$ and each isomorphism class of central extensions corresponds to a 2nd cohomology class in $H^2(\SL_2\R;\Z)$ (cf.~\cite[Chapter IV]{Brown}). 
Now we have the following. 

\begin{proposition} 
As a group, the universal covering group $\wt{\SL_2}\R$ of $\SL_2\R$ is a central extension of $\SL_2 \R$ by $\Z$ corresponding to the $2$-cocycle $W$. In other words, when we identify $\wt{\SL_2}\R$ with $\SL_2 \R \times \Z$ as sets, we have
\[ (\gamma_1, n_1) \cdot (\gamma_2, n_2) = (\gamma_1 \gamma_2, n_1 + n_2 + W(\gamma_1, \gamma_2)) \] 
for every $(\gamma_1, n_1), (\gamma_2, n_2) \in \wt{\SL_2}\R$. 
\end{proposition}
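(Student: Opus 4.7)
The plan is to realize $\wt{\SL_2}\R$ concretely as the set of pairs $(\gamma, L)$ where $\gamma \in \SL_2\R$ and $L : \bbH \to \C$ is a continuous function with $e^L = j(\gamma, \cdot)$, equipped with the natural group law, and then to transport this group structure to $\SL_2\R \times \Z$ via a section built from the principal branch of the logarithm. Since $\bbH$ is simply connected and $j(\gamma, z) = cz + d$ is nowhere vanishing on $\bbH$, continuous logarithms exist and any two differ by an integer multiple of $2\pi i$. Declaring $(\gamma_1, L_1) \cdot (\gamma_2, L_2) = (\gamma_1 \gamma_2, L_1(\gamma_2 \cdot) + L_2(\cdot))$ gives a well-defined multiplication, thanks to the automorphic cocycle identity $j(\gamma_1, \gamma_2 z) j(\gamma_2, z) = j(\gamma_1 \gamma_2, z)$.

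I would then verify that this group is the universal covering group of $\SL_2\R$. The projection $(\gamma, L) \mapsto \gamma$ is surjective with fiber $\{(I, 2\pi i n) : n \in \Z\}$ central, so it is a $\Z$-cover. Using the Iwasawa decomposition $\SL_2\R = KAN$ with $K = \mathrm{SO}(2)$, one has $\pi_1(\SL_2\R) \cong \pi_1(K) \cong \Z$, generated by one full rotation $\theta \mapsto k_\theta$. The computation $j(k_\theta, i) = \cos\theta + i\sin\theta = e^{i\theta}$ shows that $j(k_\theta, i)$ winds once around $0$ as $\theta$ traverses $[0, 2\pi]$, so the monodromy of this $\Z$-cover along the generator of $\pi_1$ is $\pm 1$, confirming that the cover is connected and simply connected.

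The identification with $\SL_2\R \times \Z$ then comes from the section $s(\gamma) := (\gamma, L_0(\gamma, \cdot))$, where $L_0(\gamma, z) = \log j(\gamma, z)$ is the principal branch with $\Im L_0 \in [-\pi, \pi)$. Continuity of $L_0(\gamma, \cdot)$ on $\bbH$ for fixed $\gamma$ is an easy direct check: $\Im j(\gamma, z) = c \Im z$, so if $c > 0$ (resp.\ $c < 0$) the image of $j(\gamma, \cdot)$ lies in the open upper (resp.\ lower) half-plane, while if $c = 0$ the function $j(\gamma, z)$ is the nonzero real constant $d$; in every case $j(\gamma, \bbH)$ avoids the branch cut of $L_0$. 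Consequently every element of $\wt{\SL_2}\R$ is uniquely of the form $(\gamma, L_0(\gamma, \cdot) + 2\pi i n)$, yielding the set bijection $\wt{\SL_2}\R \leftrightarrow \SL_2\R \times \Z$ sending $(\gamma, L_0(\gamma, \cdot) + 2\pi i n)$ to $(\gamma, n)$.

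Finally, computing
\[
s(\gamma_1) \cdot s(\gamma_2) = \bigl(\gamma_1 \gamma_2,\ L_0(\gamma_1, \gamma_2 z) + L_0(\gamma_2, z)\bigr)
\]
and applying Definition \ref{def.W} to rewrite the second coordinate as $L_0(\gamma_1 \gamma_2, z) + 2\pi i W(\gamma_1, \gamma_2)$, we see that $s(\gamma_1) s(\gamma_2)$ corresponds to $(\gamma_1 \gamma_2, W(\gamma_1, \gamma_2))$ under the bijection. The general case follows by multiplying by central elements $(I, n_1)$ and $(I, n_2)$, yielding
\[
(\gamma_1, n_1) \cdot (\gamma_2, n_2) = \bigl(\gamma_1 \gamma_2,\ n_1 + n_2 + W(\gamma_1, \gamma_2)\bigr).
\]
The main nontrivial step is identifying the concrete $(\gamma, L)$-model with the universal cover via the winding argument on $K$; the remaining steps are direct verifications.
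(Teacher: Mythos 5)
Your proposal is correct. Note that the paper itself offers no proof of this proposition: it is stated as a classical fact, with the general dictionary between central extensions by $\Z$ and $2$-cocycles delegated to Brown's book and the explicit formula for $W$ to Asai. Your argument supplies the missing verification via the standard concrete model of $\wt{\SL_2}\R$ as pairs $(\gamma, L)$ with $e^{L}=j(\gamma,\cdot)$ on the simply connected domain $\bbH$, and all the essential points are in place: the cocycle identity $j(\gamma_1,\gamma_2 z)j(\gamma_2,z)=j(\gamma_1\gamma_2,z)$ makes the multiplication well defined; the fiber over $\gamma$ is a $2\pi i\Z$-torsor of constants, giving the central $\Z$; the computation $j(k_\theta,i)=e^{i\theta}$ for $k_\theta=\smat{\cos\theta&-\sin\theta\\ \sin\theta&\cos\theta}$ correctly identifies the monodromy along the generator of $\pi_1(\SL_2\R)\cong\pi_1(\mathrm{SO}(2))$ as a generator of the deck group, which is exactly what forces the cover to be connected and simply connected rather than intermediate; and the case analysis on $\sgn c$ shows $j(\gamma,\bbH)$ misses the branch cut, so the principal-branch section $s(\gamma)=(\gamma,L_0(\gamma,\cdot))$ is a genuine element of each fiber and yields the bijection with $\SL_2\R\times\Z$ under which $s(\gamma_1)s(\gamma_2)$ sits in degree $W(\gamma_1,\gamma_2)$ by Definition \ref{def.W}. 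The only points you pass over silently are routine: one must fix a topology on the $(\gamma,L)$-model (say via local continuous choices of $L(\gamma,z_0)$ over simply connected neighborhoods in $\SL_2\R$) and check that multiplication is continuous and the projection is a covering map, so that the model really is \emph{the} universal covering \emph{group}; these are standard verifications and do not affect the correctness of the argument.
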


By virtue of the convention $\arg j(\gamma, z) = \Im \log j(\gamma, z) \in [-\pi, \pi)$, we have $W(\gamma_1, \gamma_2) \in \{-1, 0, 1\}$. 
Asai introduced the following sign function to explicitly express the values of $W$. 

\begin{definition}
	For any $\gamma = \smat{a&b\\c&d} \in \SL_2 \R$, we define its \emph{sign} by
	\begin{align*}
		\sgn(\gamma) &= \begin{cases}
			\sgn c &\text{if } c \neq 0,\\
			\sgn a = \sgn d &\text{if } c = 0
		\end{cases}\\
		&= \begin{cases}
			1 &\text{if } 0 \leq \arg j(\gamma, z) < \pi,\\
			-1 &\text{if } -\pi \leq \arg j(\gamma, z) < 0.
		\end{cases}
	\end{align*}
\end{definition}

\begin{proposition}[{\cite{Asai1970NMJ37}}]  
The values of $W(\gamma_1, \gamma_2)$ are given by the following table. 
\begin{center}
  \begin{tabular}{|c|c|c||c|} \hline
    $\sgn(\gamma_1)$ & $\sgn(\gamma_2)$ & $\sgn(\gamma_1\gamma_2)$ & $W(\gamma_1, \gamma_2)$ \\ \hline \hline
    $1$ & $1$ & $-1$ & $1$ \\
    $-1$ & $-1$ & $1$ & $-1$ \\
     & otherwise &  & $0$ \\ \hline
  \end{tabular}
\end{center} 
\end{proposition}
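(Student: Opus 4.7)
The plan is to prove the table by a short, direct case analysis starting from the definition of $W$ and the branch convention $\arg j(\gamma,z) \in [-\pi,\pi)$. First I would record that $W$ is valued in $\{-1,0,1\}$: since $j(\gamma_1,\gamma_2 z)\,j(\gamma_2,z) = j(\gamma_1\gamma_2,z)$ holds in $\C^\times$, the real quantity $\arg j(\gamma_1,\gamma_2 z) + \arg j(\gamma_2,z) - \arg j(\gamma_1\gamma_2,z)$ lies in $2\pi\Z$, and because each of the three arguments lies in $[-\pi,\pi)$, the sum is confined to $[-3\pi,3\pi)$. Dividing by $2\pi$ forces $W \in \{-1,0,1\}$, and incidentally reconfirms that $W$ is independent of $z$.

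Next I would reconcile the two descriptions of $\sgn(\gamma)$ appearing in the definition. Writing $\gamma = \smat{a&b\\c&d}$ and $z = x+iy$ with $y>0$, one has $j(\gamma,z)=(cx+d)+icy$, whose imaginary part has the sign of $c$. If $c=0$, then $ad=1$, so $j(\gamma,z)=d\in\R^\times$ is real with the sign of $d=\sgn a$. Together with the convention $\arg\in[-\pi,\pi)$ this yields the equivalence $\sgn(\gamma)=1 \iff \arg j(\gamma,z)\in[0,\pi)$, and $\sgn(\gamma)=-1 \iff \arg j(\gamma,z)\in[-\pi,0)$.

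The body of the proof is then a case split over the sign triple $(\sgn\gamma_1,\sgn\gamma_2,\sgn(\gamma_1\gamma_2)) \in \{\pm1\}^3$. For each triple one writes down the admissible interval for $\arg j(\gamma_1,\gamma_2 z)+\arg j(\gamma_2,z)-\arg j(\gamma_1\gamma_2,z)$ and locates the unique element of $2\pi\Z$ it contains. In the first highlighted row, the relevant interval is $[0,\pi)+[0,\pi)-[-\pi,0) \subseteq (0,3\pi)$, whose only multiple of $2\pi$ is $2\pi$, giving $W=1$; the second highlighted row is the symmetric interval $(-3\pi,0)$ yielding $W=-1$. For each of the remaining six triples a comparable computation produces an interval of length at most about $2\pi$ containing $0$ as its only multiple of $2\pi$, so $W=0$.

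I do not foresee a real obstacle; the mild subtlety is bookkeeping the half-open boundary $[-\pi,\pi)$ consistently, especially for the boundary values $\arg j \in \{0,-\pi\}$ that arise exactly when $c=0$. For that reason I would tabulate the eight sign triples explicitly rather than invoke a formal symmetry under $z\mapsto\bar z$, since the branch convention is not strictly symmetric.
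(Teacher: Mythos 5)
Your argument is correct. The paper itself gives no proof of this proposition — it is quoted from Asai's 1970 paper — so there is nothing to compare against except the surrounding remarks, which already contain your first two steps (the observation that $W\in\{-1,0,1\}$ and the reformulation of $\sgn(\gamma)$ in terms of $\arg j(\gamma,z)$). Your case analysis over the eight sign triples is the natural way to finish and it checks out: in the two highlighted rows the interval for $\arg j(\gamma_1,\gamma_2 z)+\arg j(\gamma_2,z)-\arg j(\gamma_1\gamma_2,z)$ is $(0,3\pi)$ resp.\ $(-3\pi,0)$, pinning $W=\pm1$, and in the remaining six cases the interval is $(-\pi,2\pi)$ or $(-2\pi,\pi)$. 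One small imprecision: those last intervals have length $3\pi$, not ``at most about $2\pi$'' as you say; the conclusion $W=0$ still holds, but only because the endpoints $\pm2\pi$ are excluded by the strict inequalities coming from the half-open branch convention $[-\pi,\pi)$ — exactly the bookkeeping you flag at the end — so it is worth writing those six intervals out rather than appealing to their length.
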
  

\begin{remark} 
Asai showed that there is no function $V: \SL_2 \R \to \R$ satisfying 
\[
	W(\gamma_1, \gamma_2) = -(\delta^1 V) (\gamma_1, \gamma_2) = V(\gamma_1 \gamma_2) - V(\gamma_1) - V(\gamma_2), 
\]
that is, the cohomology class $[W]$ in $H^2(\SL_2 \R; \R)$ is non-trivial \cite[Theorem 1]{Asai1970NMJ37}. 

Note that $W$ is a bounded $2$-cocycle by $|W(\gamma_1, \gamma_2)| \leq 1$. 
The bounded cohomology group $H_b^2(\SL_2 \R; \R)\cong \R$ naturally injects into $H^2(\SL_2 \R; \R)$, 
hence the class of $W$ in $H_b^2(\SL_2 \R; \R)$ is a generator 
(cf.~\cite{MatsumotoMorita1985, BargeGhys1992, Frigerio2017book}). 
\end{remark}

\subsubsection{A 2-coboundary of $\Gamma_{p,q}$}
We next calculate the cohomology of $\Gamma_{p,q}$ and establish the relation between the 2-cocycle $W$ on $\Gamma_{p,q}$ and the Rademacher symbol $\psi_{p,q}$. 

\begin{lemma} 
We have $H^1(\Gamma_{p,q}; \Z) = H^1(\Gamma_{p,q}; \C) = \{0\}$ and $H^2(\Gamma_{p,q}; \Z) \cong \Z/2pq \Z$. 
\end{lemma}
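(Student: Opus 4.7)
The plan is to apply the Mayer--Vietoris long exact sequence in group cohomology associated to the amalgamated product $\Gamma_{p,q}\cong A *_C B$ recorded earlier in the paper, where $A=\langle S_p\rangle\cong \Z/2p\Z$, $B=\langle U_q\rangle\cong \Z/2q\Z$, and $C=\langle -I\rangle\cong \Z/2\Z$. This sequence arises from the short exact sequence of $\Z\Gamma_{p,q}$-modules
\[ 0\to \Z[\Gamma_{p,q}/C]\to \Z[\Gamma_{p,q}/A]\oplus \Z[\Gamma_{p,q}/B]\to \Z\to 0 \]
supplied by the (contractible) Bass--Serre tree of $A*_C B$. Applying $\mathrm{Hom}_{\Z\Gamma_{p,q}}(-,M)$ and invoking Shapiro's lemma yields, for any trivial coefficient module $M$, a long exact sequence
\[ \cdots\to H^{n-1}(C;M)\to H^n(\Gamma_{p,q};M)\to H^n(A;M)\oplus H^n(B;M)\to H^n(C;M)\to\cdots, \]
in which the last displayed arrow is the difference of restrictions along $C\hookrightarrow A$ and $C\hookrightarrow B$.

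For the $H^1$ assertions one only needs that a finite cyclic group has vanishing $H^1$ with torsion-free coefficients: $H^1(A;M)\oplus H^1(B;M)=0$ for $M=\Z$ and for $M=\C$. The $n=1$ piece of the sequence then immediately gives $H^1(\Gamma_{p,q};\Z)=H^1(\Gamma_{p,q};\C)=0$.

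For $H^2(\Gamma_{p,q};\Z)$, I would invoke the Bockstein identification $H^2(\Z/m\Z;\Z)\cong H^1(\Z/m\Z;\Q/\Z)=\mathrm{Hom}(\Z/m\Z,\Q/\Z)\cong \Z/m\Z$. Under this identification the restriction along the inclusion of the unique order-two subgroup $C\hookrightarrow A$ (which sends the generator of $C$ to $p\in\Z/2p\Z$) becomes reduction modulo $2$, and likewise for $C\hookrightarrow B$. Hence the connecting arrow becomes
\[ (a,b)\longmapsto a-b\bmod 2\colon\ \Z/2p\Z\oplus\Z/2q\Z\longrightarrow \Z/2\Z, \]
which is surjective; together with $H^1(C;\Z)=0$, the sequence produces a short exact sequence
\[ 0\to H^2(\Gamma_{p,q};\Z)\to \Z/2p\Z\oplus \Z/2q\Z\xrightarrow{(a,b)\mapsto a-b\bmod 2}\Z/2\Z\to 0. \]
The element $(1,1)$ lies in the kernel and, because $\gcd(p,q)=1$ forces $\mathrm{lcm}(2p,2q)=2pq$, has order $2pq$ in $\Z/2p\Z\oplus \Z/2q\Z$; as this matches the order of the kernel, the kernel is cyclic and we conclude $H^2(\Gamma_{p,q};\Z)\cong \Z/2pq\Z$.

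The main obstacle is the precise identification of the restriction map on $H^2$ of cyclic groups; the cleanest route is through the $\mathrm{Hom}(-,\Q/\Z)$ picture provided by the Bockstein, after which everything reduces to elementary manipulations with finite abelian groups.
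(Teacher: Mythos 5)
Your proof is correct and follows essentially the same route as the paper: the paper's proof of the $H^2$ statement likewise cites the cohomology of finite cyclic groups, the decomposition $\Gamma_{p,q}\cong \Z/2p\Z *_{\Z/2\Z}\Z/2q\Z$, and the Mayer--Vietoris sequence, and your write-up simply supplies the details (the Bockstein identification of the restriction maps and the computation of the kernel) that the paper leaves implicit. The only difference is cosmetic: for $H^1$ the paper argues even more directly that a group generated by torsion elements admits no nontrivial homomorphism to $\Z$ or $\C$, whereas you run the $n=1$ part of the same Mayer--Vietoris sequence.
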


\begin{proof}
	Since the triangle group $\Gamma_{p,q}$ is generated by torsion elements $S_p$ and $U_q$, a group homomorphism $f: \Gamma_{p,q} \to \C$ is trivial. Hence $H^1(\Gamma_{p,q};\Z)=H^1(\Gamma_{p,q}; \C) = \{0\}$. 
	
The second assertion follows from the facts 
\[
	H^i(\Z/n\Z; \Z) \cong \begin{cases}
		\Z &\text{if } i=0,\\
		\Z/n\Z &\text{if } i \text{ is even with }i> 0,\\
		0 &\text{if } i \text{ is odd},
	\end{cases}
\]
$\Gamma_{p,q} \cong \Z/2p\Z *_{\Z/2\Z} \Z/2q\Z$, and the Mayer--Vietoris sequence for group cohomology.
\end{proof}

Since $H^2(\Gamma_{p,q}; \Z) \cong \Z/2pq \Z$, we have $2pq[W]=0$ in $H^2(\Gamma_{p,q}; \Z)$. 
Hence there exists a function $f: \Gamma_{p,q} \to \Z$ satisfying the coboundary condition 
\[ 2pq W(\gamma_1, \gamma_2) = -(\delta^1 f)(\gamma_1, \gamma_2) = f(\gamma_1 \gamma_2) - f(\gamma_1) - f(\gamma_2) \] 
for every $\gamma_1, \gamma_2 \in \Gamma_{p,q}$. 
Such $f$ is unique. Indeed, if there are two functions $f_1, f_2 : \Gamma_{p,q} \to \C$ satisfying the same coboundary condition, then the difference $f_1 - f_2$ is a homomorphism. Hence by $H^1(\Gamma_{p,q}; \C) = \{0\}$, we have  $f_1-f_2=0$. 
We further have the following. 
\begin{theorem} \label{thm.W} 
The Rademacher symbol $\psi_{p,q}:\Gamma_{p,q}\to \Z$ 
is a unique function satisfying 
\[ 2pqW|_{\Gamma_{p,q}}=-\delta^1\psi_{p,q}.\]
\end{theorem}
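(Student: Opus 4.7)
The plan is to derive the coboundary identity directly from the two facts already available in Section \ref{s2}: the 1-cocycle relation $R_{p,q}(\gamma_1\gamma_2, z) = R_{p,q}(\gamma_1, \gamma_2 z) + R_{p,q}(\gamma_2, z)$ for the modular gap of $F_{p,q}$, and the explicit split $R_{p,q}(\gamma, z) = 2pq \log j(\gamma, z) + 2\pi i \psi_{p,q}(\gamma)$ from Definition \ref{def.psi}. For existence, I would substitute the second equation into each of the three terms of the first and collect the $\log j$ pieces on one side. This yields
\[
2\pi i\bigl(\psi_{p,q}(\gamma_1\gamma_2) - \psi_{p,q}(\gamma_1) - \psi_{p,q}(\gamma_2)\bigr) = 2pq\bigl(\log j(\gamma_1, \gamma_2 z) + \log j(\gamma_2, z) - \log j(\gamma_1\gamma_2, z)\bigr),
\]
and the right-hand side is precisely $2pq \cdot 2\pi i \, W(\gamma_1, \gamma_2)$ by Definition \ref{def.W}. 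Dividing by $2\pi i$ gives the relation $\psi_{p,q}(\gamma_1\gamma_2) - \psi_{p,q}(\gamma_1) - \psi_{p,q}(\gamma_2) = 2pq\, W(\gamma_1, \gamma_2)$, which is exactly the coboundary equation $2pqW|_{\Gamma_{p,q}} = -\delta^1 \psi_{p,q}$ in the paper's sign convention.

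Uniqueness is then immediate from what has already been established just before the theorem: if $f_1, f_2 : \Gamma_{p,q} \to \Z$ both satisfy $-\delta^1 f_i = 2pqW|_{\Gamma_{p,q}}$, then $f_1 - f_2$ is a group homomorphism $\Gamma_{p,q} \to \C$, hence represents a class in $H^1(\Gamma_{p,q}; \C)$. By the preceding lemma, $H^1(\Gamma_{p,q}; \C) = 0$ because $\Gamma_{p,q}$ is generated by the torsion elements $S_p$ and $U_q$, so $f_1 = f_2$.

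There is essentially no obstacle beyond careful bookkeeping of signs and branch conventions. The only point that deserves a line of attention is that the substitution is legitimate as an identity of complex numbers (not merely modulo $2\pi i \Z$): the right-hand side lies in $2\pi i \Z$ because $W$ takes integer values under the fixed convention $\arg j(\gamma, z) \in [-\pi, \pi)$, which matches the convention used to define $\psi_{p,q}$, and Proposition \ref{prop.psi-integer-value} already guarantees that all $\psi_{p,q}$-values are integers. With these conventions aligned, the computation is a one-line verification and the theorem follows.
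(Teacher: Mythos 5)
Your proposal is correct and follows essentially the same route as the paper: the existence part is the same substitution of the definition of $\psi_{p,q}$ into the $1$-cocycle relation for $R_{p,q}$, with the surviving $\log j$ terms identified as $2\pi i\cdot 2pq\,W(\gamma_1,\gamma_2)$, and the uniqueness part is the same appeal to $H^1(\Gamma_{p,q};\C)=0$ established just before the theorem.
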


\begin{proof} It suffices to verify that the equality 
\[ \psi_{p,q}(\gamma_1 \gamma_2) - \psi_{p,q}(\gamma_1) - \psi_{p,q}(\gamma_2) = 2pq W(\gamma_1, \gamma_2) \]
 holds for every $\gamma_1, \gamma_2 \in \Gamma_{p,q}$. 
 By the definition of $\psi_{p,q}$, the left-hand side equals
\begin{align*}
\frac{\,1\,}{2\pi i} \big( &R_{p,q}(\gamma_1 \gamma_2, z) - 2pq \log j(\gamma_1 \gamma_2, z) - R_{p,q}(\gamma_1, \gamma_2 z) + 2pq \log j(\gamma_1, \gamma_2 z)\\
& - R_{p,q}(\gamma_2, z) + 2pq \log j(\gamma_2, z) \big).
\end{align*}
The $R_{p,q}$-terms cancel out by the 1-cocycle relation and the remaining equals $2pq W(\gamma_1, \gamma_2)$. 
\end{proof} 


\subsection{The additive character $\chi_{p,q} : \widetilde{\Gamma}_{p,q} \to \Z$} \label{s3-3} 

In this subsection, we provide another characterization of the Rademacher symbol $\psi_{p,q}$ by using an additive character $\chi_{p,q} : \widetilde{\Gamma}_{p,q} \to \Z$.  

As before, we assume that $\wt{\SL_2}\R=\SL_2\R\times \Z$ as a set. 
Let $P:\wt{\SL_2}\R\to \SL_2\R; (\gamma,n)\mapsto \gamma$ denote the universal covering map and put $\wt{\Gamma}_{p,q}=P^{-1}(\Gamma_{p,q})$, so that we have 
\[\wt{\Gamma}_{p,q}=\{(\gamma,n)\in \wt{\SL_2}\R \mid \gamma \in \Gamma_{p,q}, n \in \Z\}. \] 
For each $\gamma \in \Gamma_{p,q}$, we define the standard lift by $\wt{\gamma}=(\gamma,0) \in \wt{\Gamma}_{p,q}$. 

\begin{lemma} The lifts of $S_p,U_q,T_{p,q} \in \Gamma_{p,q}$ satisfy 
\[ \widetilde{S}_p\!^p = (-I, 1) = \widetilde{U}_q\!^q, \qquad \widetilde{T}_{p,q} = \widetilde{-I} \widetilde{U}_q \widetilde{S}_p. \]
The group $\widetilde{\Gamma}_{p,q}$ is generated by $\widetilde{S}_p$ and $\widetilde{U}_q$.
\end{lemma}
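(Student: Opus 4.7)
The plan is to unwind each identity from the multiplication rule
\[
(\gamma_1,n_1)(\gamma_2,n_2)=(\gamma_1\gamma_2,\ n_1+n_2+W(\gamma_1,\gamma_2)),
\]
combining Asai's sign table with the explicit expressions for $S_p^n$ and $U_q^n$ furnished by Lemma \ref{Chebyshev}. First I would compute $\wt{S}_p^p$ by induction, writing $\wt{S}_p^n=(S_p^n,a_n)$ with $a_1=0$ and $a_{n+1}=a_n+W(S_p^n,S_p)$. The lower-left entry of $S_p^n$ is $C_n(\cos(\pi/p))=\sin(n\pi/p)/\sin(\pi/p)$, which is strictly positive for $1\le n\le p-1$, so $\sgn(S_p^n)=+1$ in that range, while $S_p^p=-I$ has $\sgn=-1$. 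The pattern $(+1,+1,+1)$ contributes $W=0$, so $W(S_p^n,S_p)=0$ for $1\le n\le p-2$; the last step $n=p-1\mapsto p$ has pattern $(+1,+1,-1)$ and contributes $+1$. Hence $a_p=1$ and $\wt{S}_p^p=(-I,1)$. The same calculation with $U_q$ in place of $S_p$ yields $\wt{U}_q^q=(-I,1)$.

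For the identity $\wt{T}_{p,q}=\wt{-I}\,\wt{U}_q\,\wt{S}_p$, I would verify that the two composition steps each have vanishing cocycle contribution. Reading the signs off the matrices gives $\sgn(-I)=-1$, $\sgn(U_q)=+1$, $\sgn(-U_q)=-1$, $\sgn(S_p)=+1$, and $\sgn(T_{p,q})=+1$ from $T_{p,q}=\smat{1&\lambda\\0&1}$. The triples $(-1,+1,-1)$ for the first composition and $(-1,+1,+1)$ for the second both lie outside the two nontrivial rows of Asai's table, so both $W$-contributions vanish and the product collapses telescopically to $(T_{p,q},0)=\wt{T}_{p,q}$.

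For the generation claim, the kernel of $P:\wt{\Gamma}_{p,q}\to\Gamma_{p,q}$ is the central $\Z$ generated by $(I,1)$, so it suffices to show that $(I,1)$ and every standard lift $\wt{\gamma}$ lie in $\langle\wt{S}_p,\wt{U}_q\rangle$. Squaring the first identity and using $W(-I,-I)=-1$ (from the pattern $(-1,-1,+1)$) gives $(\wt{S}_p^p)^2=(-I,1)^2=(I,1)$, producing the generator of the centre. For an arbitrary $(\gamma,n)\in\wt{\Gamma}_{p,q}$, I would write $\gamma$ as a word in $S_p^{\pm 1}, U_q^{\pm 1}$ and take the corresponding product of standard lifts to obtain $(\gamma,M)$ for some $M\in\Z$; multiplying by $\wt{S}_p^{\,2p(n-M)}$ then yields $(\gamma,n)$. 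The only step with real bookkeeping is the sign tracking in $\wt{S}_p^p$; everything else is a direct consultation of Asai's table.
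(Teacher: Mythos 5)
Your proposal is correct and follows essentially the same route as the paper: the paper's proof simply states that the identities follow from the multiplication rule together with Lemma \ref{Chebyshev} (i.e.\ the sign bookkeeping you carry out explicitly via $C_n(\cos\tfrac{\pi}{p})=\sin(n\pi/p)/\sin(\pi/p)>0$ for $1\le n\le p-1$), and deduces generation from $(I,1)=\widetilde{S}_p^{\,2p}=\widetilde{U}_q^{\,2q}$ exactly as you do. Your write-up just supplies the details the paper leaves implicit.
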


\begin{proof} The equalities immediately follow from the group operation of $\wt{\SL_2}\R$ with use of $W$ and Lemma \ref{Chebyshev}. 
Since we have $(I,1) = \widetilde{S}_p\!^{2p} = \widetilde{U}_q\!^{2q}$, 
the elements $\widetilde{S}_p$ and $\widetilde{U}_q$ generate $\widetilde{\Gamma}_{p,q}$.
\end{proof}

Let $\chi: \widetilde{\Gamma}_{p,q} \to \Z$ be an additive character, that is, a group homomorphism to the additive group $\Z$. Such $\chi$ is determined by the values $\chi(\widetilde{S}_p) = s$ and $\chi(\widetilde{U}_q) = u$. 
The relation $\widetilde{S}_p\!^p = (-I, 1) = \widetilde{U}_q\!^q$ imposes the condition $\chi(-I, 1) = ps = qu$ on the pair $(s,u)$. Since $p$ and $q$ are coprime, we have $s = mq, u = mp$ for some $m\in \Z$. 
In addition, since $(-I,1)^2 = (I,1)$, we have $\chi(I,1) = 2mpq$. 

Define a function $V:\Gamma_{p,q}\to \Z$ by putting $V(\gamma)=\chi(\wt{\gamma})$. 
Then we have $\chi(\gamma, n) = \chi(\widetilde{\gamma} \cdot (I,1)^n) = V(\gamma) + 2mnpq$ for any $(\gamma,n)\in \wt{\Gamma}_{p,q}$. In addition, for any $\gamma_1, \gamma_2\in \Gamma_{p,q}$, 
by the relation $\widetilde{\gamma}_1 \cdot \widetilde{\gamma}_2 = (\gamma_1 \gamma_2, W(\gamma_1, \gamma_2))$, 
we have 
\[
V(\gamma_1 \gamma_2) = V(\gamma_1) + V(\gamma_2) - 2mpq W(\gamma_1, \gamma_2).
\]

If $m=-1$, then Theorem \ref{thm.W} yields $V = \psi_{p,q}$. Consequently, we obtain the following.

\begin{theorem} \label{thm.chi} 
The additive character $\chi_{p,q}: \widetilde{\Gamma}_{p,q} \to \Z$ determined by $\chi_{p,q}(\widetilde{S}_p) = -q$ and $\chi_{p,q}(\widetilde{U}_q) = -p$ satisfies 
\[ \psi_{p,q}(\gamma) = \chi_{p,q}(\gamma, n) + 2npq \]
for every $\gamma \in \Gamma_{p,q}$ and $n \in \Z$.
\end{theorem}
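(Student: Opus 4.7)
The plan is to carry out the computation that the paragraph immediately preceding the theorem has already set up, specializing it to $m=-1$. In essence, almost all of the work is done before the theorem is stated; what remains is to verify well-definedness of $\chi_{p,q}$, match it to $\psi_{p,q}$ via Theorem~\ref{thm.W}, and extend the formula from standard lifts $\widetilde{\gamma}$ to arbitrary elements $(\gamma,n)\in\widetilde{\Gamma}_{p,q}$.

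First I would check that the assignment $\widetilde{S}_p\mapsto -q$, $\widetilde{U}_q\mapsto -p$ extends to a well-defined additive character. Since $\widetilde{\Gamma}_{p,q}$ is generated by $\widetilde{S}_p$ and $\widetilde{U}_q$ subject (modulo torsion considerations already handled in the preceding lemma) to the identification $\widetilde{S}_p^{\,p}=\widetilde{U}_q^{\,q}$, the only compatibility to verify is $p\cdot(-q)=q\cdot(-p)$, which is immediate. Along the way I would record the value $\chi_{p,q}(I,1)=2\chi_{p,q}(-I,1)=2\chi_{p,q}(\widetilde{S}_p^{\,p})=2p\cdot(-q)=-2pq$, which corresponds to $m=-1$ in the preceding discussion.

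Next, define $V(\gamma):=\chi_{p,q}(\widetilde{\gamma})$ for $\gamma\in\Gamma_{p,q}$. Using $\widetilde{\gamma_1}\cdot\widetilde{\gamma_2}=(\gamma_1\gamma_2,W(\gamma_1,\gamma_2))=\widetilde{\gamma_1\gamma_2}\cdot(I,1)^{W(\gamma_1,\gamma_2)}$ and applying the homomorphism $\chi_{p,q}$, one obtains
\[
V(\gamma_1)+V(\gamma_2)=V(\gamma_1\gamma_2)+W(\gamma_1,\gamma_2)\,\chi_{p,q}(I,1)=V(\gamma_1\gamma_2)-2pq\,W(\gamma_1,\gamma_2),
\]
so that $-\delta^1 V=2pq\,W|_{\Gamma_{p,q}}$. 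By the uniqueness statement in Theorem~\ref{thm.W}, we conclude $V=\psi_{p,q}$.

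Finally, for an arbitrary element $(\gamma,n)\in\widetilde{\Gamma}_{p,q}$, writing $(\gamma,n)=\widetilde{\gamma}\cdot(I,1)^n$ and applying $\chi_{p,q}$ gives $\chi_{p,q}(\gamma,n)=V(\gamma)+n\chi_{p,q}(I,1)=\psi_{p,q}(\gamma)-2npq$, which rearranges to the desired identity. I do not anticipate a genuine obstacle here; the only subtlety is sign-bookkeeping in the cocycle computation, in particular correctly deriving $\chi_{p,q}(I,1)=-2pq$ from the relation $\widetilde{S}_p^{\,p}=(-I,1)$ so that the sign in front of $2pq\,W$ comes out matching the convention of Theorem~\ref{thm.W}.
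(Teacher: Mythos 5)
Your proposal is correct and follows essentially the same route as the paper: the paper's proof is precisely the paragraph preceding the theorem (an arbitrary character forces $\chi(\widetilde S_p)=mq$, $\chi(\widetilde U_q)=mp$, $\chi(I,1)=2mpq$, and the cocycle identity $-\delta^1V=2mpqW$), specialized to $m=-1$ and combined with the uniqueness in Theorem~\ref{thm.W}. Your only addition is the explicit well-definedness check via the relation $\widetilde S_p^{\,p}=\widetilde U_q^{\,q}$, which the paper leaves implicit; the signs and the final rearrangement $\chi_{p,q}(\gamma,n)=\psi_{p,q}(\gamma)-2npq$ all match.
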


\begin{remark} Theorem \ref{thm.chi} is a generalization of Asai's result in his unpublished lecture note \cite{Asai2003Shizuoka}. 
His function $\Phi$ satisfies $\Phi(\gamma)=\psi_{2,3}(\gamma)+3\sgn(\gamma)$ for any $\gamma\in \SL_2\Z$. 
\end{remark} 

For the convenience of later use, let us calculate the values of the Rademacher symbol at several elements. 
By Theorem \ref{thm.chi}, we easily see 
\begin{align*}
	\psi_{p,q}(-I) &= \chi_{pq}(-I,1) + 2pq = -pq + 2pq = pq,\\
	\psi_{p,q}(T_{p,q}) &= \chi_{p,q}(\widetilde{-I} \widetilde{U}_q \widetilde{S}_p) = pq - p - q = r. 
\end{align*} 
The latter agrees with the previous result in Lemma \ref{psi-generators}. In addition, we have the following. 

\begin{lemma}\label{lem.psi-minus-gamma}
	For any $\gamma = \smat{a&b\\c&d} \in \Gamma_{p,q}$, we have
	\begin{align*}
		\psi_{p,q}(-\gamma) &= \psi_{p,q}(\gamma) + pq \sgn(\gamma),\\
		\psi_{p,q}(\gamma^{-1}) &= \begin{cases}
			-\psi_{p,q}(\gamma) + 2pq &\text{if } c = 0, d < 0,\\
			-\psi_{p,q}(\gamma) &\text{if\ otherwise}.
		\end{cases}
	\end{align*}
\end{lemma}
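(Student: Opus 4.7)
The plan is to deduce both formulas from Theorem \ref{thm.chi}, which (in the case $n=0$) gives $\psi_{p,q}(\gamma) = \chi_{p,q}(\widetilde{\gamma})$, together with Asai's explicit table of values for the bounded $2$-cocycle $W$. The key identity is that in $\widetilde{\SL_2}\R = \SL_2\R \times \Z$ the product of standard lifts satisfies $\widetilde{\gamma}_1 \widetilde{\gamma}_2 = (\gamma_1\gamma_2, W(\gamma_1,\gamma_2))$, and applying the homomorphism $\chi_{p,q}$ and using $\chi_{p,q}(I,1) = -2pq$ converts each such relation into a correction term $-2pq\, W(\gamma_1,\gamma_2)$ on the $\psi_{p,q}$ side.

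For the first formula, I would write $\widetilde{-I}\cdot \widetilde{\gamma} = (-\gamma, W(-I,\gamma))$ and apply $\chi_{p,q}$ to both sides. The left side gives $\psi_{p,q}(-I) + \psi_{p,q}(\gamma) = pq + \psi_{p,q}(\gamma)$, while the right side equals $\psi_{p,q}(-\gamma) - 2pq\, W(-I,\gamma)$. Hence $\psi_{p,q}(-\gamma) = \psi_{p,q}(\gamma) + pq + 2pq\, W(-I,\gamma)$. A short case check using Asai's table (with $\sgn(-I)=-1$ and $\sgn(-\gamma)=-\sgn(\gamma)$ in all cases) shows $W(-I,\gamma)=-1$ when $\sgn(\gamma)=-1$ and $W(-I,\gamma)=0$ when $\sgn(\gamma)=1$, so in both cases the right-hand side collapses to $\psi_{p,q}(\gamma) + pq\,\sgn(\gamma)$.

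For the second formula, apply $\chi_{p,q}$ to $\widetilde{\gamma}\cdot \widetilde{\gamma^{-1}} = (I, W(\gamma,\gamma^{-1}))$. The left side is $\psi_{p,q}(\gamma) + \psi_{p,q}(\gamma^{-1})$ and the right side is $-2pq\, W(\gamma,\gamma^{-1})$, giving $\psi_{p,q}(\gamma^{-1}) = -\psi_{p,q}(\gamma) - 2pq\,W(\gamma,\gamma^{-1})$. The value of $W(\gamma,\gamma^{-1})$ is then determined by a three-way sign analysis: the lower-left entry of $\gamma^{-1}$ is $-c$, so when $c\neq 0$ one has $\sgn(\gamma^{-1})=-\sgn(\gamma)$, and when $c=0$ one has $\sgn(\gamma^{-1})=\sgn(d)=\sgn(a)=\sgn(\gamma)$. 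Combined with $\sgn(I)=1$, Asai's table gives $W(\gamma,\gamma^{-1})=-1$ exactly in the regime $c=0$, $d<0$ (i.e.\ $\sgn(\gamma)=\sgn(\gamma^{-1})=-1$), and $W(\gamma,\gamma^{-1})=0$ otherwise. Substituting produces the stated dichotomy.

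There is no serious obstacle; the only thing that requires care is the sign bookkeeping in the second part, where one has to remember that passing from $\gamma$ to $\gamma^{-1}$ flips the sign of $c$ but preserves the common sign of $a$ and $d$ when $c=0$, so the two halves of Asai's table behave asymmetrically. All other steps are formal consequences of the fact, established in the previous subsection, that $\psi_{p,q}$ is (up to the identification $\psi_{p,q}(\gamma)=\chi_{p,q}(\widetilde{\gamma})$) the restriction of an additive character of $\widetilde{\Gamma}_{p,q}$ to the set of standard lifts.
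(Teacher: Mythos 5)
Your proof is correct and follows essentially the same route as the paper: both arguments reduce the two identities to the coboundary relation $\psi_{p,q}(\gamma_1\gamma_2)-\psi_{p,q}(\gamma_1)-\psi_{p,q}(\gamma_2)=2pq\,W(\gamma_1,\gamma_2)$ (equivalently, additivity of $\chi_{p,q}$ on $\widetilde{\Gamma}_{p,q}$ with $\chi_{p,q}(I,1)=-2pq$), together with an evaluation of $W(-I,\gamma)$ and $W(\gamma,\gamma^{-1})$ from Asai's table. The sign bookkeeping you flag — $\sgn(\gamma^{-1})=-\sgn(\gamma)$ for $c\neq 0$ but $\sgn(\gamma^{-1})=\sgn(\gamma)$ for $c=0$ — is exactly the point the paper's proof also turns on, via its explicit formula for $(\gamma,n)^{-1}$ in $\widetilde{\SL_2}\R$.
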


\begin{proof}
	By Theorem \ref{thm.W}, we have 
	\[
		\psi_{p,q}(-\gamma) = \psi_{p,q}(\gamma) + \psi_{p,q}(-I) + 2pq W(-I, \gamma).
	\]
	Recall $\psi_{p,q}(-I) = pq$. Since $W(-I, \gamma) = 0$ if $\sgn(\gamma) = +1$ and $W(-I, \gamma) = -1$ if $\sgn(\gamma) = -1$, we have $\psi_{p,q}(-I) + 2pq W(-I, \gamma) = pq \sgn(\gamma)$.
	
	In general, the inverse of any $(\gamma, n) \in \widetilde{\SL_2}\R$ is given by
	\[
		(\gamma, n)^{-1} = (\gamma^{-1}, -n - W(\gamma, \gamma^{-1})) = \begin{cases}
		(\gamma^{-1}, -n+1) &\text{if } c=0, d<0,\\
		(\gamma^{-1}, -n) &\text{if\ otherwise}.
	\end{cases}
	\]
	Hence we have
	\[
		\psi_{p,q}(\gamma^{-1}) = \chi_{p,q}(\gamma^{-1}, 0) = \chi_{p,q}((\gamma, 1)^{-1}) = -\chi_{p,q}(\gamma, 1) = -\psi_{p,q}(\gamma) +2pq
	\]
	if $c = 0, d < 0$, and 
	\[
		\psi_{p,q}(\gamma^{-1}) = \chi_{p,q}((\gamma,0)^{-1}) = -\psi_{p,q}(\gamma)
	\]
	if otherwise. 
\end{proof}

We also use the following lemma later. 

\begin{lemma}\label{lem.psi=1}
Let $(x,y) \in \Z^2$ be a pair satifying $px+qy=1$, $|x| < q,$ $|y| < p,$ $xy<0$ and put  
$\gamma = U_q\!^{-x} S_p\!^{-y} = \smat{a&b\\c&d} \in \Gamma_{p,q}$. Then we have
\begin{itemize}
\item[$\bullet$] $\gamma = T_{2,3} = \smat{1 & 1 \\ 0 & 1}$ if $(p,q) = (2,3)$,
\item[$\bullet$] $\tr \gamma> 2$ and $c > 0$ if $(p,q) \neq (2,3)$.
\end{itemize}
In both cases, we have $\psi_{p,q}(\gamma) = 1$.
\end{lemma}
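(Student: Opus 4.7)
The plan is to first compute $\psi_{p,q}(\gamma) = 1$ from the cocycle formalism, and then verify the claims on the matrix form.

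For the value $\psi_{p,q}(\gamma) = 1$, I apply the $2$-cocycle relation of Theorem~\ref{thm.W}:
\[ \psi_{p,q}(\gamma) = \psi_{p,q}(U_q^{-x}) + \psi_{p,q}(S_p^{-y}) + 2pq\, W(U_q^{-x}, S_p^{-y}). \]
By induction using Lemma~\ref{psi-generators}, together with the observation that $W(U_q^{\,k}, U_q) = 0$ for $1 \leq k \leq q-2$ (since $\sgn(U_q^{\,j}) = +1$ throughout that range and the product sign remains $+1$), one obtains $\psi_{p,q}(U_q^{\,k}) = -kp$ for $1 \leq k \leq q-1$; Lemma~\ref{lem.psi-minus-gamma} (applicable because the $(2,1)$-entry of $U_q^{\,k}$ is nonzero in this range) then gives $\psi_{p,q}(U_q^{-x}) = xp$ for $0 < |x| < q$. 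An analogous argument yields $\psi_{p,q}(S_p^{-y}) = yq$ for $0 < |y| < p$. The hypothesis $xy < 0$ forces the signs $\sgn(U_q^{-x})$ and $\sgn(S_p^{-y})$ to be opposite (directly from the Chebyshev form in Lemma~\ref{Chebyshev}), so Asai's sign table gives $W(U_q^{-x}, S_p^{-y}) = 0$. Summing, $\psi_{p,q}(\gamma) = xp + yq = 1$.

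For the matrix-form assertions, I first observe that the two solution pairs with $xy < 0$ are related by $(x,y) \leftrightarrow (x - q,\, y + p)$, and both produce the same $\gamma$ because $U_q^{\,q} = S_p^{\,p} = -I$ is central. Hence I may fix the sub-case $x > 0$, $y < 0$. For $(p, q) = (2, 3)$ the only such pair is $(x, y) = (2, -1)$, and a direct matrix multiplication with $S_2 = \spmx{0 & -1 \\ 1 & 0}$ and $U_3 = \spmx{1 & -1 \\ 1 & 0}$ gives $\gamma = T_{2,3} = \spmx{1 & 1 \\ 0 & 1}$.

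For $(p, q) \neq (2, 3)$, I use Lemma~\ref{Chebyshev} with $\alpha = \pi/q$, $\beta = \pi/p$ to expand
\[ c \sin\alpha \sin\beta = -\sin(x\alpha)\sin((y+1)\beta) - \sin((x+1)\alpha)\sin(y\beta). \]
Under $0 < x < q$ and $-p < y < 0$, each summand is a product of sines of definite signs, so $c \geq 0$. Both terms vanish simultaneously only at $(x, y) = (q-1, -1)$, which forces $r = pq - p - q = 1$, i.e., $(p, q) = (2, 3)$. Hence $c > 0$ for $(p, q) \neq (2, 3)$. The trace inequality is handled analogously: applying product-to-sum and sum-to-product identities to the three-term Chebyshev expansion of $\tr\gamma - 2$, one obtains a factorization
\[ (\tr\gamma - 2)\sin\alpha\sin\beta = 4 P Q \]
where $P$ and $Q$ are sign-definite trigonometric expressions whose simultaneous vanishing again reduces to $r = 1$. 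Hence $\tr\gamma > 2$ for $(p,q) \neq (2,3)$.

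The main obstacle is the derivation of the trace factorization $\tr\gamma - 2 \propto PQ$. While the expression for $c$ is immediately a clean sum of products amenable to sign analysis, the trace expression is a linear combination of three $\sin \cdot \sin$ products plus the constant $-2$, and extracting the product form requires several careful applications of product-to-sum and sum-to-product identities. Once the factorization is in hand, the sign analysis mirrors that for $c$, isolating $r = 1$ as the exceptional case.
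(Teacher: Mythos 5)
Your overall strategy is the paper's: the same cocycle computation gives $\psi_{p,q}(\gamma)=px+qy=1$ (the paper likewise notes $\sgn(U_q^{\,n})=\sgn(n)$, $\sgn(S_p^{\,m})=\sgn(m)$ in the relevant ranges, so that $\psi_{p,q}(U_q^{-x})=px$, $\psi_{p,q}(S_p^{-y})=qy$ and $W(U_q^{-x},S_p^{-y})=0$ from $xy<0$), the $(2,3)$ case is the same direct multiplication, and your formula for $c$ is exactly the $(2,1)$-entry of the paper's Chebyshev matrix product. Your added observations --- the reduction to $x>0>y$ via $(x,y)\mapsto(x-q,y+p)$ and the identification of the degenerate case $(x,y)=(q-1,-1)$ with $r=1$ --- are correct and make the $c>0$ step more explicit than the paper's one-line assertion.

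The one genuine gap is the trace inequality: you reduce $\tr\gamma>2$ to a factorization $(\tr\gamma-2)\sin\tfrac{\pi}{q}\sin\tfrac{\pi}{p}=4PQ$ with sign-definite factors, but you never exhibit $P$ and $Q$, and you flag this yourself as the main obstacle; as written, the hyperbolicity of $\gamma$ is not established. The factorization does exist: with $\alpha=\pi/q$, $\beta=\pi/p$ and $x>0>y$ one may take
\[ P=\cos\tfrac{(x+1)\alpha}{2}\sin\tfrac{(|y|+1)\beta}{2}+\cos\tfrac{(x-1)\alpha}{2}\sin\tfrac{(|y|-1)\beta}{2},\quad Q=\sin\tfrac{(x+1)\alpha}{2}\cos\tfrac{(|y|+1)\beta}{2}+\sin\tfrac{(x-1)\alpha}{2}\cos\tfrac{(|y|-1)\beta}{2}, \]
where every angle lies in $[0,\pi/2]$, so $P,Q\ge 0$; moreover $P=0$ forces $(x,|y|)=(q-1,1)$, i.e.\ $px+qy=r=1$, and $Q=0$ forces $(x,|y|)=(1,p-1)$, i.e.\ $px+qy=-r\neq 1$. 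So your route closes, but only after this nontrivial identity is verified. The paper avoids it entirely: it regroups the Chebyshev expansion as
\[ \tr\gamma=2\Big(\frac{\bigl|\sin\tfrac{\pi x}{q}\sin\tfrac{\pi y}{p}\bigr|}{\sin\tfrac{\pi}{p}\sin\tfrac{\pi}{q}}\big(\cos\tfrac{\pi}{p}\cos\tfrac{\pi}{q}+1\big)+\cos\tfrac{\pi x}{q}\cos\tfrac{\pi y}{p}\Big) \]
and concludes $>2$ from $|\sin\tfrac{\pi x}{q}|\ge\sin\tfrac{\pi}{q}$, $|\sin\tfrac{\pi y}{p}|\ge\sin\tfrac{\pi}{p}$ and $|\cos\tfrac{\pi x}{q}\cos\tfrac{\pi y}{p}|\le\cos\tfrac{\pi}{p}\cos\tfrac{\pi}{q}$ (the equality case again lands on $px+qy=\pm r$). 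Either write out your factorization or switch to this direct bound, which is considerably less work than the product-to-sum manipulations you anticipate.
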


\begin{proof}
If $(p,q) = (2,3)$, then we have $\gamma = U_3^{-2} S_2 = U_3 S_2^{-1} = \smat{1 & 1 \\ 0 & 1}$. 
	
If $(p,q) \neq (2,3)$, then by Lemma~\ref{Chebyshev}, we have 
\[ \gamma = \frac{\,1\,}{\sin \frac{\pi}{p}} \frac{\,1\,}{\sin \frac{\pi}{q}} \pmat{-\sin \frac{\pi (x-1)}{q} & \sin \frac{\pi x}{q}\\ -\sin \frac{\pi x}{q} & \sin \frac{\pi(x+1)}{q}} \pmat{\sin \frac{\pi(y+1)}{p} & \sin \frac{\pi y}{p} \\ -\sin \frac{\pi y}{p} & -\sin \frac{\pi(y-1)}{p}}. \]
By the condition $xy < 0$, we have $c > 0$. In addition, we have 
\begin{align*}
\tr \gamma &= - \frac{2}{ \sin \frac{\pi}{p} \cdot \sin \frac{\pi}{q}} \big( \sin \frac{\pi x}{q} \cos \frac{\pi}{q} \sin \frac{\pi y}{p} \cos \frac{\pi}{p} - \sin \frac{\pi}{q} \cos \frac{\pi x}{q} \sin \frac{\pi}{p} \cos \frac{\pi y}{p} + \sin \frac{\pi x}{q} \sin \frac{\pi y}{p} \big)\\
&= 2 \big(\frac{\left|\sin \frac{\pi x}{q} \sin \frac{\pi y}{p}\right|}{\sin \frac{\pi}{p} \sin \frac{\pi}{q}} \big(\cos \frac{\pi}{p} \cos \frac{\pi}{q} +1 \big) + \cos \frac{\pi x}{q} \cos \frac{\pi y}{p} \big) >2. 
\end{align*} 
	
For any $m,n$ with $0 < |m| < p$ and $0 < |n| < q$, we have $\sgn(S_p\!^m) = \sgn(m)$ and $\sgn(U_q\!^n) = \sgn(n)$. Hence we have $\psi_{p,q}(U_q\!^{-x}) = -x \psi_{p,q}(U_q) = px$ and $\psi_{p,q}(S_p\!^{-y}) = -y \psi_{p,q}(S_p) = qy$. By $xy < 0$, we obtain
\[ \psi_{p,q}(\gamma) = \psi_{p,q}(U_q\!^{-x}) + \psi_{p,q}(S_p\!^{-y}) + 2pq W(U_q\!^{-x}, S_p\!^{-y}) = px+qy = 1. 
\qedhere \]
\end{proof}


\subsection{Class-invariant functions} 
\label{subsec.variants} 

In this subsection, we recall several variants of the classical Rademacher symbol and generalize them for any $\Gamma_{p,q}$. 
We modify the Rademacher symbol $\psi_{p,q}$ to obtain a class-invariant function, namely, the original Rademacher symbol $\Psi_{p,q}$. In addition, we define the Dedekind symbol $\Phi_{p,q}$ and the homogeneous Rademacher symbol $\Psi_{p,q}^{\rm h}$ and attach remarks. 

\subsubsection{The classical cases} 

Let us recollect two classical variant $\Phi_{2,3}$ and $\Psi_{2,3}$ of the Rademacher symbol $\psi_{2,3}$. 
The Dedekind symbol $\Phi_{2,3}: \SL_2 \Z \to \Z$ introduced by Dedekind in 1892 \cite{Dedekind1892} is defined as a unique function satisfying  
\[
 \log \Delta_{2,3} \left(\gamma z \right) - \log \Delta_{2,3} (z) = 
\begin{cases}
\displaystyle{12 \log \big(\frac{cz+d}{i \sgn c} \big) + 2\pi i \Phi_{2,3}(\gamma)} &\text{if } c \neq 0,\\
2\pi i \Phi_{2,3}(\gamma) &\text{if } c = 0,
\end{cases}
\]
for every $\gamma=\smat{a&b\\c&d}\in \SL_2\Z$ and $z\in \bbH$, 
assuming $\Im \log z \in (-\pi, \pi)$. Here, $\sgn c \in \{-1, 0, 1\}$ denotes the usual sign function. 
For each $a\in \Z$ and $c\in \Z_{>0}$, the Dedekind sum is defined by 
\[s(a,c) = \sum_{k=1}^{c-1} (\!(\dfrac{k}{c})\!) (\!(\dfrac{ka}{c})\!),\] 
where we put $(\!(x)\!) = x - \lfloor x \rfloor - 1/2$ if $x \not\in \Z$ and $(\!(x)\!) = 0$ if $x \in \Z$. 
The following formula is due to Dedekind: 
\[
\Phi_{2,3} (\spmx{a&b\\c&d} ) = 
\begin{cases}
\dfrac{a+d}{c} - 12 \sgn c \cdot s(a, |c|) \qquad &\text{if } c \neq 0,\\
\dfrac{\,b\,}{d} &\text{if } c=0.
\end{cases}
\]

This symbol $\Phi_{2,3}$ is not a class-invariant function. 
In 1956 \cite{Rademacher1956}, Rademacher introduced a class-invariant function by modifying the Dedekind symbol, namely, he defined the original Rademacher symbol $\Psi_{2,3}:\SL_2\Z\to \Z$ by putting 
\[ \Psi_{2,3}(\gamma) = \Phi_{2,3}(\gamma) -3 \sgn(c(a+d)). \] 
This symbol $\Psi_{2,3}$ satisfies
\[ \Psi_{2,3}(\gamma) = \Psi_{2,3}(-\gamma) = -\Psi_{2,3}(\gamma^{-1}) = \Psi_{2,3}(g^{-1} \gamma g) \]
for any $\gamma, g \in \SL_2 \Z$. In addition, if $\tr \gamma  >0$, then
\[ \log \Delta_{2,3}(\gamma z) - \log \Delta_{2,3}(z) = 12 \log j(\gamma, z) + 2\pi i \Psi_{2,3}(\gamma) \]
holds, that is, we have $\Psi_{2,3}(\gamma)=\psi_{2,3}(\gamma)$.

We remark that there are many more variants in literatures with confusions. 
The clarification between $\Phi_{2,3}$ and $\Psi_{2,3}$ is due to \cite{DukeImamogluToth2017Duke}. 

\subsubsection{The original symbol $\Psi_{p,q}$} 


Let us generalize the original symbol for any $\Gamma_{p,q}$. 
\begin{definition} \label{def.original} 
We define \emph{the original Rademacher symbol $\Psi_{p,q}:\Gamma_{p,q}\to \Z$ for} $\Gamma_{p,q}$ by
\[ \Psi_{p,q}(\gamma) = \psi_{p,q}(\gamma) + \frac{pq}{2} \sgn(\gamma) (1 - \sgn \tr \gamma ), \]
where $\sgn(\gamma)\in \{\pm1\}$ denotes Asai's sign function and $\sgn \tr \gamma  \in \{-1, 0, 1\}$ the usual sign function. 
\end{definition}
If we put $(p,q)=(2,3)$, then we obtain the classical symbol $\Psi_{2,3}$ due to Rademacher. 
If $\tr \gamma>0$, then $\Psi_{p,q}(\gamma)=\psi_{p,q}(\gamma)$ holds. 
The following assertion is proved by Lemmas \ref{lem.-gamma}--\ref{lem.conjugate}.
\begin{proposition} \label{prop.class-invariant} 
For any $\gamma, g \in \Gamma_{p,q}$, 
\[ \Psi_{p,q}(\gamma) = \Psi_{p,q}(-\gamma) = - \Psi_{p,q}(\gamma^{-1}) = \Psi_{p,q}(g^{-1} \gamma g) \]
holds. 
In addition, if $|\tr \gamma | \geq 2$, then $\Psi_{p,q}(\gamma^n) = n \Psi_{p,q}(\gamma)$ holds for any $n \in \Z$.
\end{proposition}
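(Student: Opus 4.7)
The plan is to prove the four identities
$\Psi_{p,q}(-\gamma)=\Psi_{p,q}(\gamma)=-\Psi_{p,q}(\gamma^{-1})=\Psi_{p,q}(g^{-1}\gamma g)$ and the power law
$\Psi_{p,q}(\gamma^n)=n\Psi_{p,q}(\gamma)$ separately, in each case reducing to a statement about $\psi_{p,q}$ together with a combinatorial check that the correction term $\tfrac{pq}{2}\sgn(\gamma)(1-\sgn\tr\gamma)$ from Definition \ref{def.original} compensates precisely for the failure of $\psi_{p,q}$ to be class-invariant. The three main ingredients will be Lemma \ref{lem.psi-minus-gamma} (behaviour of $\psi_{p,q}$ under $\gamma\mapsto -\gamma$ and $\gamma\mapsto\gamma^{-1}$), Theorem \ref{thm.W} (the coboundary relation $2pq W=-\delta^1\psi_{p,q}$), and Theorem \ref{thm.chi} (the additive character $\chi_{p,q}\colon\widetilde{\Gamma}_{p,q}\to\Z$).

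For $\Psi_{p,q}(-\gamma)=\Psi_{p,q}(\gamma)$, one uses $\sgn(-\gamma)=-\sgn(\gamma)$ and $\sgn\tr(-\gamma)=-\sgn\tr\gamma$ together with Lemma \ref{lem.psi-minus-gamma} to compute
\[
\Psi_{p,q}(-\gamma)-\Psi_{p,q}(\gamma)
= pq\sgn(\gamma)+\tfrac{pq}{2}\bigl(-\sgn(\gamma)(1+\sgn\tr\gamma)-\sgn(\gamma)(1-\sgn\tr\gamma)\bigr),
\]
which collapses to $0$. For $\Psi_{p,q}(\gamma^{-1})=-\Psi_{p,q}(\gamma)$ the sign $\sgn(\gamma^{-1})$ flips when $c\neq 0$ but is preserved when $c=0$, and $\tr\gamma^{-1}=\tr\gamma$; splitting into the cases $c\neq 0$, $(c=0,d>0)$, $(c=0,d<0)$ as in Lemma \ref{lem.psi-minus-gamma} gives a direct verification in each case.

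For the conjugation identity $\Psi_{p,q}(g^{-1}\gamma g)=\Psi_{p,q}(\gamma)$, the cleanest route is via the character $\chi_{p,q}$: since conjugation in $\widetilde{\Gamma}_{p,q}$ is an inner automorphism and $\chi_{p,q}$ is a group homomorphism, $\chi_{p,q}(\widetilde{g}^{-1}\widetilde{\gamma}\widetilde{g})=\chi_{p,q}(\widetilde{\gamma})=\psi_{p,q}(\gamma)$. Writing the product in $\widetilde{\SL}_2\R$ as $\widetilde{g}^{-1}\widetilde{\gamma}\widetilde{g}=(g^{-1}\gamma g,N)$ with $N=-W(g,g^{-1})+W(g^{-1},\gamma)+W(g^{-1}\gamma,g)\in\Z$, Theorem \ref{thm.chi} yields $\psi_{p,q}(g^{-1}\gamma g)=\psi_{p,q}(\gamma)+2pqN$. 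Since $\tr\gamma$ is conjugation-invariant, the problem reduces to showing
\[
2pqN+\tfrac{pq}{2}(1-\sgn\tr\gamma)\bigl(\sgn(g^{-1}\gamma g)-\sgn(\gamma)\bigr)=0.
\]
When $\tr\gamma>0$, the second summand vanishes and one shows $N=0$ using the explicit table of $W$-values together with the fact that Asai's sign of a positive-trace matrix admits only limited configurations under the products $g^{-1}\gamma$, $g^{-1}\gamma g$. When $\tr\gamma\leq 0$, both Asai signs can change, but a finite case-analysis (based on the signs of the lower-left entries of $g,\gamma,g^{-1}\gamma,g^{-1}\gamma g$) shows the two contributions cancel exactly. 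I expect this step to be the main obstacle, since it is the only part of the proof where the precise form of the correction term $\tfrac{pq}{2}\sgn(\gamma)(1-\sgn\tr\gamma)$ is forced by the combinatorics of $W$.

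Finally, for the power law with $|\tr\gamma|\geq 2$, by (1) one may assume $\tr\gamma>0$ (so $\tr\gamma\geq 2$), and by (3) one may conjugate so that the lower-left entry of $\gamma$ is non-negative; the hyperbolic/parabolic case then yields $\sgn(\gamma)=+1$ and $\tr(\gamma^n)>0$ for all $n\geq 1$, so the correction term is zero and $\Psi_{p,q}(\gamma^n)=\psi_{p,q}(\gamma^n)$. Since the positive-sign matrices with $\tr\geq 2$ form a subsemigroup of $\SL_2\R$, the $W$-table gives $W(\gamma^k,\gamma)=0$ for all $k\geq 0$, whence by induction $\widetilde{\gamma}^{\,n}=\widetilde{\gamma^n}$ in $\widetilde{\Gamma}_{p,q}$, and additivity of $\chi_{p,q}$ gives $\psi_{p,q}(\gamma^n)=n\,\psi_{p,q}(\gamma)$. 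The case $n<0$ then follows from (2), and $n=0$ is trivial.
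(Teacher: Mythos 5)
Your reductions for $\Psi_{p,q}(-\gamma)=\Psi_{p,q}(\gamma)$ and $\Psi_{p,q}(\gamma^{-1})=-\Psi_{p,q}(\gamma)$ via Lemma \ref{lem.psi-minus-gamma} are correct and match the paper. The problem is the conjugation identity, which is the heart of the proposition and which you do not actually prove. Your setup via $\chi_{p,q}$ correctly reduces the claim to
\[ 2pqN+\tfrac{pq}{2}(1-\sgn\tr\gamma)\bigl(\sgn(g^{-1}\gamma g)-\sgn(\gamma)\bigr)=0, \qquad N=-W(g,g^{-1})+W(g^{-1},\gamma)+W(g^{-1}\gamma,g), \]
but the assertion that this follows from ``the explicit table of $W$-values'' plus ``a finite case-analysis based on the signs of the lower-left entries'' is not a proof, because the identity is \emph{false} for some sign configurations: using $W(\gamma_1,\gamma_2)=\frac14(\sgn\gamma_1+\sgn\gamma_2-\sgn(\gamma_1\gamma_2)-\sgn\gamma_1\sgn\gamma_2\sgn(\gamma_1\gamma_2))$ one finds, e.g.\ for $c_g\neq0$, that $N=\frac14(\sgn\gamma-\sgn(g^{-1}\gamma g))(1+\sgn(g)\sgn(g^{-1}\gamma))$, so when $\tr\gamma>0$ you must prove the genuinely matrix-theoretic fact that $\sgn(g^{-1}\gamma g)\neq\sgn(\gamma)$ forces $\sgn(g^{-1}\gamma)=-\sgn(g)$. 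Ruling out the bad configurations requires working with the actual entries, not just the sign table; this is exactly where the paper spends its effort, and it first reduces to $g\in\{T_{p,q},S_p\}$ (legitimate, since conjugation by a word is iterated conjugation by generators) precisely to make that entry-level case analysis feasible. Your plan forgoes that reduction and leaves the resulting harder analysis, for both $\tr\gamma>0$ and $\tr\gamma\le0$, entirely undone. That is a genuine gap.

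A secondary error: in the power law you justify $W(\gamma^k,\gamma)=0$ by claiming that positive-sign matrices with $\tr\ge2$ form a subsemigroup of $\SL_2\R$. This is false: $\smat{3&-1\\1&0}$ and $\smat{0&-1\\1&3}$ both have $\sgn=+1$ and trace $3$, yet their product is $\smat{-1&-6\\0&-1}$, with trace $-2$ and Asai sign $-1$. What is true, and what the paper proves, is that the \emph{powers of a single} such $\gamma$ stay positive-sign with positive trace, via $\gamma^n=a_n(t)\gamma-a_{n-1}(t)I$ with $a_n(t)\ge0$; with that substitute your induction $\widetilde{\gamma}^{\,n}=\widetilde{\gamma^n}$ and the rest of step 4 go through. (Your preliminary reduction ``conjugate so that $c\ge0$'' is also unjustified and unnecessary; replacing $\gamma$ by $\gamma^{-1}$ when $\sgn(\gamma)=-1$, as the paper does, is cleaner.)
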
 

\begin{lemma} \label{lem.-gamma}
For any $\gamma \in \Gamma_{p,q}$, we have $\Psi_{p,q}(-\gamma) = \Psi_{p,q}(\gamma)$, that is, $\Psi_{p,q}$ induces a function on $\Gamma_{p,q} / \{\pm I\}$.
\end{lemma}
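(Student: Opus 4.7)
The plan is a straightforward algebraic verification starting from the defining formula
\[
\Psi_{p,q}(\gamma) = \psi_{p,q}(\gamma) + \tfrac{pq}{2}\,\sgn(\gamma)\bigl(1-\sgn\tr\gamma\bigr),
\]
with three independent inputs: the behaviour of $\psi_{p,q}$, of Asai's sign, and of the trace under $\gamma \mapsto -\gamma$.

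First, I would record two elementary facts. Writing $\gamma=\smat{a&b\\c&d}$, directly from the definition of Asai's sign (using $-c$ if $c\neq 0$ and $-a=-d$ otherwise), one has $\sgn(-\gamma)=-\sgn(\gamma)$. Likewise $\tr(-\gamma)=-\tr\gamma$, hence $\sgn\tr(-\gamma)=-\sgn\tr\gamma$. These hold in all cases, including $\tr\gamma=0$.

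Next, I invoke the first formula of Lemma \ref{lem.psi-minus-gamma}, namely $\psi_{p,q}(-\gamma)=\psi_{p,q}(\gamma)+pq\,\sgn(\gamma)$. Substituting this into the definition and using the two sign identities above,
\[
\Psi_{p,q}(-\gamma) = \psi_{p,q}(\gamma) + pq\,\sgn(\gamma) + \tfrac{pq}{2}\bigl(-\sgn(\gamma)\bigr)\bigl(1+\sgn\tr\gamma\bigr).
\]
Expanding the last term and collecting yields $\psi_{p,q}(\gamma) + \tfrac{pq}{2}\sgn(\gamma)\bigl(2-1-\sgn\tr\gamma\bigr) = \Psi_{p,q}(\gamma)$, which is the desired equality.

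There is really no obstacle here: the lemma is essentially a compatibility check designed into the definition of $\Psi_{p,q}$, and the correction term $\tfrac{pq}{2}\sgn(\gamma)(1-\sgn\tr\gamma)$ was cooked up precisely to absorb the $pq\,\sgn(\gamma)$ discrepancy coming from $\psi_{p,q}(-\gamma)-\psi_{p,q}(\gamma)$. The only point worth a brief remark in the write-up is that the case $\tr\gamma=0$ is covered without change, since then $\sgn\tr(-\gamma)=0$ as well and the same algebra goes through; and that $\sgn(\gamma)\in\{\pm 1\}$ throughout, so no degenerate sign issue arises.
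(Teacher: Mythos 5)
Your proof is correct and is essentially identical to the paper's: both substitute the identity $\psi_{p,q}(-\gamma)=\psi_{p,q}(\gamma)+pq\,\sgn(\gamma)$ from Lemma \ref{lem.psi-minus-gamma} together with $\sgn(-\gamma)=-\sgn(\gamma)$ and $\sgn\tr(-\gamma)=-\sgn\tr\gamma$ into the definition of $\Psi_{p,q}$ and simplify. The remark about the $\tr\gamma=0$ case is a nice touch but does not change the argument.
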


\begin{proof}
By Lemma \ref{lem.psi-minus-gamma}, we obtain
\begin{equation*} \begin{aligned} 
\Psi_{p,q}(-\gamma) &= \psi_{p,q}(-\gamma) + \frac{pq}{2} \sgn(-\gamma) (1 - \sgn \tr(-\gamma) )\\
&= \psi_{p,q}(\gamma) + pq \sgn(\gamma) - \frac{pq}{2} \sgn(\gamma) (1 + \sgn \tr \gamma ) =\Psi_{p,q}(\gamma).\\[-6mm] 
\end{aligned} \end{equation*} 
\end{proof}

\begin{lemma}
For any $\gamma \in \Gamma_{p,q}$, we have $\Psi_{p,q}(\gamma^{-1}) = -\Psi_{p,q}(\gamma)$.
\end{lemma}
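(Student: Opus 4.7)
The plan is a straightforward case analysis using the inversion formula for $\psi_{p,q}$ from Lemma~\ref{lem.psi-minus-gamma} together with elementary observations about how $\tr$ and Asai's sign function behave under inversion. Write $\gamma=\smat{a&b\\c&d}$, so $\gamma^{-1}=\smat{d&-b\\-c&a}$. In particular $\tr\gamma^{-1}=\tr\gamma$, so the factor $(1-\sgn\tr\gamma)$ in the definition of $\Psi_{p,q}$ is unchanged under $\gamma\mapsto\gamma^{-1}$. The only subtle behavior is in $\sgn(\gamma)$ and in the $+2pq$ correction term of Lemma~\ref{lem.psi-minus-gamma}, which appears precisely when $c=0$ and $d<0$.

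First I would split into the three cases of Lemma~\ref{lem.psi-minus-gamma}. \emph{Case 1: $c\ne 0$.} Then $\sgn(\gamma^{-1})=-\sgn(\gamma)$ and $\psi_{p,q}(\gamma^{-1})=-\psi_{p,q}(\gamma)$, so
\[
\Psi_{p,q}(\gamma^{-1})=-\psi_{p,q}(\gamma)-\tfrac{pq}{2}\sgn(\gamma)(1-\sgn\tr\gamma)=-\Psi_{p,q}(\gamma).
\]
\emph{Case 2: $c=0$, $d>0$.} Then $a=d^{-1}>0$, so $\tr\gamma>0$, making $(1-\sgn\tr\gamma)=0$; both correction terms vanish and the claim reduces to $\psi_{p,q}(\gamma^{-1})=-\psi_{p,q}(\gamma)$. \emph{Case 3: $c=0$, $d<0$.} Here $\tr\gamma<0$, $\sgn(\gamma)=\sgn(\gamma^{-1})=-1$, and $\psi_{p,q}(\gamma^{-1})=-\psi_{p,q}(\gamma)+2pq$. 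A direct substitution shows
\[
\Psi_{p,q}(\gamma^{-1})=-\psi_{p,q}(\gamma)+2pq-pq=-\psi_{p,q}(\gamma)+pq=-\Psi_{p,q}(\gamma),
\]
since $\Psi_{p,q}(\gamma)=\psi_{p,q}(\gamma)-pq$ in this case.

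There is no genuine obstacle; the only thing to verify carefully is that the $+2pq$ correction in Lemma~\ref{lem.psi-minus-gamma} (which comes from the choice of branch in $W(\gamma,\gamma^{-1})$) is exactly compensated by the jump of $\tfrac{pq}{2}\sgn(\gamma)(1-\sgn\tr\gamma)$ between $\gamma$ and $\gamma^{-1}$ in the degenerate case $c=0,d<0$. This is precisely what the definition of $\Psi_{p,q}$ was engineered to achieve, and it is the reason one takes the particular coefficient $\tfrac{pq}{2}$ rather than anything else.
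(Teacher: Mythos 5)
Your proof is correct and follows essentially the same route as the paper: a case analysis driven by the inversion formula for $\psi_{p,q}$ in Lemma~\ref{lem.psi-minus-gamma}, with the $c=0$, $d<0$ case handled exactly as in the paper's proof. The only difference is that the paper writes out just that one case and leaves the others as ``similar,'' whereas you spell out all three.
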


\begin{proof}
If $\gamma = \smat{a & b \\c & d} \in \Gamma_{p,q}$ satisfies $c = 0$ and $d <0$, then by Lemma \ref{lem.psi-minus-gamma}, 
\[ \Psi_{p,q}(\gamma^{-1}) = \psi_{p,q}(\gamma^{-1}) - pq = -\psi_{p,q}(\gamma) + pq = -\Psi_{p,q}(\gamma). \]
Other cases are obtained in as similar manner. 
\end{proof}

\begin{lemma} 
For $\gamma \in \Gamma_{p,q}$ with $|\tr \gamma | \geq 2$, we heve $\Psi_{p,q}(\gamma^n) = n \Psi_{p,q}(\gamma)$.
\end{lemma}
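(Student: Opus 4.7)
The plan is to reduce to the case $\tr\gamma>0$ and $n\ge 0$, and then exploit the constancy of Asai's sign $\sgn(\gamma^k)$ along the orbit to kill the 2-cocycle $W$ in the identity $\psi_{p,q}(\gamma^n)=n\psi_{p,q}(\gamma)-2pq\sum_{k=1}^{n-1}W(\gamma,\gamma^k)$ coming from Theorem~\ref{thm.W}.

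First, I would perform two symmetry reductions. Since $\Psi_{p,q}(-\gamma)=\Psi_{p,q}(\gamma)$ (Lemma~\ref{lem.-gamma}) and $(-\gamma)^n=(-1)^n\gamma^n$, the identity is invariant under $\gamma\mapsto-\gamma$, so I may assume $\tr\gamma\ge 2$. Since $\tr\gamma^{-1}=\tr\gamma\ge 2$ as well, and $\Psi_{p,q}(\gamma^{-1})=-\Psi_{p,q}(\gamma)$, the case $n<0$ follows from the case $n\ge 0$ applied to $\gamma^{-1}$. The cases $n=0,1$ are immediate from $\psi_{p,q}(I)=\chi_{p,q}(I,0)=0$, so only $n\ge 2$ with $\tr\gamma\ge 2$ needs work.

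Next, I would show that $\sgn(\gamma^k)=\sgn(\gamma)$ for every $k\ge 1$. Writing $\gamma=\smat{a&b\\c&d}$, the Cayley--Hamilton recursion $\gamma^{k+1}=(\tr\gamma)\gamma^k-\gamma^{k-1}$ gives $(\gamma^k)_{21}=c\cdot C_k(\tr\gamma/2)$ with the Chebyshev polynomial of Lemma~\ref{Chebyshev}; for $\tr\gamma\ge 2$ one has $C_k(\tr\gamma/2)>0$ (hyperbolic case) or $C_k(1)=k>0$ (parabolic case), so $\sgn(\gamma^k)_{21}=\sgn c$ whenever $c\ne 0$. If $c=0$, then $\tr\gamma\ge 2$ forces $d=a^{-1}>0$, and every $\gamma^k$ is upper triangular with $(a^k,a^{-k})$ on the diagonal, so $\sgn(\gamma^k)=+1=\sgn(\gamma)$. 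Inspecting the table defining $W$, the condition $\sgn(\gamma)=\sgn(\gamma^k)=\sgn(\gamma^{k+1})$ forces $W(\gamma,\gamma^k)=0$ for all $k\ge 1$.

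Finally, substituting into Theorem~\ref{thm.W} gives $\psi_{p,q}(\gamma^n)=n\psi_{p,q}(\gamma)$. Since $\tr\gamma^n=\xi^n_\gamma+\xi^{-n}_\gamma\ge 2$ (hyperbolic) or $\tr\gamma^n=2$ (parabolic), we have $\sgn\tr\gamma^n=+1$, so the correction term in Definition~\ref{def.original} vanishes both for $\gamma$ and for $\gamma^n$: that is, $\Psi_{p,q}(\gamma)=\psi_{p,q}(\gamma)$ and $\Psi_{p,q}(\gamma^n)=\psi_{p,q}(\gamma^n)$. Combining, $\Psi_{p,q}(\gamma^n)=n\Psi_{p,q}(\gamma)$, as required. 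The only delicate point is the sign analysis of the Chebyshev values $C_k(\tr\gamma/2)$ in the hyperbolic case $\tr\gamma=2$ versus $\tr\gamma>2$; handling the boundary case $\tr\gamma=2$ separately via the explicit form $\gamma^k=\smat{1&kt\\0&1}$ (after conjugation) avoids the issue.
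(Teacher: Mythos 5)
Your proof is correct and follows essentially the same route as the paper: reduce to $\tr\gamma\geq 2$ and $n>0$, use the Cayley--Hamilton/Chebyshev recursion $\gamma^{k}=a_k(t)\gamma-a_{k-1}(t)I$ to see that $\sgn(\gamma^k)$ is constant and $\tr\gamma^k>0$, so every cocycle term $W(\gamma,\gamma^k)$ vanishes and $\Psi_{p,q}(\gamma^n)=\psi_{p,q}(\gamma^n)=n\psi_{p,q}(\gamma)=n\Psi_{p,q}(\gamma)$. Two immaterial slips: the cocycle identity should read $\psi_{p,q}(\gamma^n)=n\psi_{p,q}(\gamma)+2pq\sum_{k=1}^{n-1}W(\gamma,\gamma^k)$ (plus sign, though the sum is zero anyway), and the boundary case $\tr\gamma=2$ is the parabolic, not hyperbolic, one and is already covered by $C_k(1)=k>0$, so no conjugation (which would in any case not preserve $\Psi_{p,q}$ at this stage) is needed.
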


\begin{proof}
Since $-\Psi_{p,q}(\gamma^{-1}) = \Psi_{p,q}(-\gamma) = \Psi_{p,q}(\gamma)$ holds by the above lemmas, we may assume $\sgn(\gamma) > 0$, $\tr \gamma  \geq 2$, and $n > 0$ without loss of generality. Put $t = \tr \gamma  \geq 2$. Then we have $\gamma^n = a_n(t) \gamma - a_{n-1}(t) I$, where $a_0(t) = 0, a_1(t) = 1$, and $a_n(t) = t a_{n-1}(t) - a_{n-2}(t)$. This implies that $\sgn(\gamma^n) > 0$ and $\tr(\gamma^n) > 0$ for any $n>0$. Hence we obtain
\[ \Psi_{p,q}(\gamma^n) = \psi_{p,q} (\gamma^n) = \chi_{p,q} (\gamma^n, 0) = \chi_{p,q}((\gamma,0)^n) = n \psi_{p,q}(\gamma) = n \Psi_{p,q}(\gamma), \]
which conclude the proof.
\end{proof}

\begin{lemma} \label{lem.conjugate}
The function $\Psi_{p,q}(\gamma)$ is a class-invariant function, that is, for any $g \in \Gamma_{p,q}$, we have $\Psi_{p,q}(g^{-1} \gamma g) = \Psi_{p,q}(\gamma)$.
\end{lemma}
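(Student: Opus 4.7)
The plan is to use Theorem~\ref{thm.chi} and work in the universal cover $\widetilde{\SL_2}\R = \SL_2\R\times \Z$. The crucial observation is that, because $\chi_{p,q}:\widetilde{\Gamma}_{p,q}\to\Z$ is a group homomorphism into the abelian group $\Z$, we have the tautology $\chi_{p,q}(\widetilde{g}^{-1}\widetilde{\gamma}\widetilde{g}) = \chi_{p,q}(\widetilde{\gamma})$ for any standard lifts.

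First I would compute the product $\widetilde{g}^{-1}\widetilde{\gamma}\widetilde{g}$ in $\widetilde{\SL_2}\R$. Using the inverse formula $\widetilde{g}^{-1} = (g^{-1},-W(g,g^{-1}))$ and the multiplication law $(\gamma_1,n_1)(\gamma_2,n_2) = (\gamma_1\gamma_2, n_1+n_2+W(\gamma_1,\gamma_2))$, a direct unfolding gives
\[ \widetilde{g}^{-1}\widetilde{\gamma}\widetilde{g} = \bigl(g^{-1}\gamma g,\; N(g,\gamma)\bigr), \qquad N(g,\gamma) := W(g^{-1},\gamma) + W(g^{-1}\gamma, g) - W(g,g^{-1}). \]
Applying $\chi_{p,q}$ and invoking the relation $\chi_{p,q}(\gamma,n) = \psi_{p,q}(\gamma) - 2pq\,n$ from Theorem~\ref{thm.chi}, I would deduce
\[ \psi_{p,q}(g^{-1}\gamma g) = \psi_{p,q}(\gamma) + 2pq\,N(g,\gamma). \]

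Since $\tr(g^{-1}\gamma g) = \tr\gamma$, the desired identity $\Psi_{p,q}(g^{-1}\gamma g) = \Psi_{p,q}(\gamma)$ then reduces via Definition~\ref{def.original} to the sign identity
\[ 4\,N(g,\gamma) = (1-\sgn\tr\gamma)\bigl(\sgn(\gamma) - \sgn(g^{-1}\gamma g)\bigr). \]
By Lemma~\ref{lem.-gamma}, the substitution $\gamma\mapsto -\gamma$ together with $\sgn(-\gamma) = -\sgn(\gamma)$ reduces the problem to the case $\tr\gamma\geq 0$. When $\tr\gamma>0$ the right-hand side vanishes, and I would prove $N(g,\gamma)=0$ by enumerating the sign configurations permitted by Asai's table: nonzero contributions to the three summands of $N$ arise only from the patterns $(+,+,-)$ and $(-,-,+)$, and tracking these across the products $gg^{-1}=I$, $g^{-1}\gamma$, $g^{-1}\gamma g$ shows that they cancel in every admissible configuration. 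The remaining boundary case $\tr\gamma=0$ forces $\gamma$ to be a conjugate of $\pm S_p^{p/2}$ (possible only when $p$ is even), which I would dispose of by direct substitution into the universal-cover formula.

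The main obstacle is the finite but tedious sign bookkeeping required to establish $N(g,\gamma)=0$ on $\{\tr\gamma>0\}$. Conceptually, the structural reason is that the correction term $\tfrac{pq}{2}\sgn(\gamma)(1-\sgn\tr\gamma)$ in Definition~\ref{def.original} is engineered precisely to absorb the twist that the standard lift picks up when crossing from $\{\tr>0\}$ to $\{\tr<0\}$ in the universal cover; on the component where $\tr>0$ the standard lift behaves like a canonical exponential lift and is preserved by conjugation, while on $\{\tr<0\}$ conjugation introduces a controllable sheet discrepancy measured exactly by $\sgn(\gamma)-\sgn(g^{-1}\gamma g)$.
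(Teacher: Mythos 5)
Your framework is sound and genuinely different from the paper's: deriving $\psi_{p,q}(g^{-1}\gamma g)=\psi_{p,q}(\gamma)+2pq\,N(g,\gamma)$ from the character $\chi_{p,q}$ of Theorem \ref{thm.chi} is correct (the computation of $N(g,\gamma)=W(g^{-1},\gamma)+W(g^{-1}\gamma,g)-W(g,g^{-1})$ checks out), and the lemma is indeed equivalent to the sign identity you state. The paper instead expands $\Psi_{p,q}(g^{-1}\gamma g)$ via the coboundary relation of Theorem \ref{thm.W} and, crucially, first reduces to conjugating elements $g\in\{T_{p,q},S_p\}$.

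The gap is precisely the step you defer as ``finite but tedious sign bookkeeping'': proving $N(g,\gamma)=0$ for \emph{arbitrary} $g$ when $\tr\gamma>0$. Asai's table gives the value of each $W$ \emph{given} the signs of the matrices involved, but it says nothing about which sign tuples are realizable, and there are tuples consistent with the table for which $N\neq 0$. For example, $(\sgn g,\sgn g^{-1},\sgn\gamma,\sgn(g^{-1}\gamma),\sgn(g^{-1}\gamma g))=(+,-,+,+,-)$ yields $W(g,g^{-1})=0$, $W(g^{-1},\gamma)=0$, $W(g^{-1}\gamma,g)=1$, hence $N=1$. So enumeration over the table cannot ``show that they cancel in every admissible configuration''; the whole content of the lemma is that such tuples do not occur when $\tr\gamma>0$, and establishing that requires input beyond the table. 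Two ways to close this: (i) as in the paper, first reduce to $g\in\{T_{p,q},S_p\}$ (conjugation-invariance is stable under composing conjugators, and these elements generate $\Gamma_{p,q}$), after which realizability becomes a short explicit check on the cases $d>0$, $d=0$, $d<0$ using $\det=1$; or (ii) note that $\{c=0,\,d<0\}\subset\{\tr<0\}$, so on $\{\tr\gamma>0\}$ both $\gamma$ and $g^{-1}\gamma g$ avoid the discontinuity locus of the standard section $\gamma\mapsto(\gamma,0)$, whence $N$ is locally constant on the connected set $\SL_2\R\times\{\tr>0\}$ and equals $N(g,I)=0$. Two smaller points: the $\tr\gamma=0$ case suffers from the same problem ($g$ is still arbitrary, so ``direct substitution'' for $\gamma\sim\pm S_p^{\,p/2}$ does not dispose of it), and you must also allow $\gamma\sim\pm U_q^{\,q/2}$ when $q$ is even.
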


\begin{proof}
We may assume $\sgn(\gamma) > 0$ and $\tr \gamma  \geq 0$ without loss of generality. It suffices to show the equation $\Psi_{p,q}(g^{-1} \gamma g) = \Psi_{p,q}(\gamma)$ for generators $g = T_{p,q}, S_p$. By the definitions, we have
\begin{align*}
\Psi_{p,q} (g^{-1} \gamma g) &= \Psi_{p,q}(g^{-1}) + \Psi_{p,q}(\gamma) + \Psi_{p,q}(g) + 2pq ( W(g^{-1}, \gamma g) + W(\gamma, g) )\\
&\qquad + \frac{pq}{2} \Big( \sgn(g^{-1} \gamma g) (1- \sgn \tr(g^{-1} \gamma g) ) - \sgn(g^{-1}) (1 - \sgn \tr(g^{-1}))\\
&\qquad \qquad - 1 + \sgn \tr \gamma  - \sgn(g) (1 - \sgn \tr g ) \Big).
\end{align*}
By $\Psi_{p,q}(g^{-1}) + \Psi_{p,q}(g) = 0$ and 
\[ W(\gamma_1, \gamma_2) = \frac{\,1\,}{4} \Big(\sgn(\gamma_1) + \sgn(\gamma_2) - \sgn(\gamma_1 \gamma_2) - \sgn(\gamma_1) \sgn(\gamma_2) \sgn(\gamma_1 \gamma_2) \Big), \]
we obtain
\begin{align*}
\Psi_{p,q}(g^{-1} \gamma g) = &\Psi_{p,q}(\gamma) + \frac{pq}{2} \Big(- \sgn(g^{-1}) \sgn(\gamma g) \sgn(g^{-1} \gamma g) - \sgn(g) \sgn(\gamma g)\\
&- \sgn(g^{-1} \gamma g) \sgn \tr \gamma  + \sgn(g^{-1}) \sgn \tr(g^{-1}) + \sgn \tr \gamma  + \sgn(g) \sgn \tr g \Big).
\end{align*}

If $g = T_{p,q}$, then we have $\sgn(\gamma g) = \sgn(g^{-1} \gamma g) = \sgn(\gamma) = 1$, that is, $\Psi_{p,q}(g^{-1} \gamma g) = \Psi_{p,q}(\gamma)$. 

If $g = S_p$, then we have
\begin{align*}
\Psi_{p,q}(g^{-1} \gamma g) = &\Psi_{p,q}(\gamma) + \frac{pq}{2} \big(\sgn(\gamma g) - \sgn \tr \gamma  \big) \big(\sgn(g^{-1} \gamma g)-1\big).
\end{align*} 
Assume $\gamma = \smat{a&b\\c&d}$ with $a+d \geq 0$ and $\sgn(\gamma) > 0$. Then we see that
\[ \gamma S_p = \pmat{b & -a + 2b\cos \frac{\pi}{p} \\ d & -c + 2d\cos \frac{\pi}{p}}, \qquad S_p\!^{-1} \gamma S_p = \pmat{d + 2b \cos \frac{\pi}{p} & * \\ -b & a - 2b\cos \frac{\pi}{p}}.\]
\begin{enumerate}
		\item If $a + d = 0$, then $-bc = a^2 + 1 > 0$. Thus we have $c > 0$ and $-b > 0$, that is, $\sgn(g^{-1} \gamma g) = 1$.
		\item If $a + d > 0$, then it suffices to show that $(\sgn(\gamma S_p) - 1)(\sgn(S_p^{-1} \gamma S_p) - 1) = 0$. 
			\begin{itemize}
				\item If $d > 0$, then we have $\sgn(\gamma S_p) = 1$.  
				\item If $d = 0$, then $\sgn(\gamma S_p) = \sgn(-c) < 0$. In addition, by $\det(\gamma) = -bc = 1$, we have $b < 0$. Hence we obtain $\sgn(S_p\!^{-1} \gamma S_p) = 1$.
				\item If $d < 0$, then $\sgn(\gamma S_p) = -1$. In this case, we have $a > 0, c >0$, and $b < 0$. Hence we have $\mathrm{sgn}(S_p\!^{-1} \gamma S_p) = 1$.
			\end{itemize}
	\end{enumerate}
	In conclusion, we obtain $\Psi_{p,q}(g^{-1} \gamma g) = \Psi_{p,q}(\gamma)$ for all cases.
\end{proof}

\subsubsection{Other variants $\Phi_{p,q}$ and $\Psi_{p,q}^{\rm h}$}
Here, we discuss two more variants $\Phi_{p,q}$ and $\Psi_{p,q}^{\rm h}$. 
\begin{definition}
We define \emph{the Dedekind symbol} $\Phi_{p,q} : \Gamma_{p,q} \to \frac{\,1\,}{2} \Z$ by
\[ \Phi_{p,q}(\gamma) = \Psi_{p,q}(\gamma) + \frac{pq}{2} \sgn(c(a+d)). \]
\end{definition} 
This symbol $\Phi_{p,q}$ is a unique function satisfying
\[ \Phi_{p,q}(\gamma_1 \gamma_2) - \Phi_{p,q}(\gamma_1) - \Phi_{p,q}(\gamma_2) = - \frac{pq}{2} \sgn(c_1 c_2 c_{12}) \]
for every $\gamma_i = \smat{* & * \\ c_i & *} \in \Gamma_{p,q}$ with $\gamma_1 \gamma_2 = \smat{* & * \\ c_{12} & *}$, hence a generalization of \cite[(62)]{RademacherGrosswald1972}. The values at generators are given by
\[ \Phi_{p,q}(T_{p,q}) = r = pq-p-q, \quad \Phi_{p,q}(S_p) = \frac{q(p-2)}{2}, \quad \Phi_{p,q}(U_q) = \frac{p(q-2)}{2}. \]
\begin{definition} 
We define \emph{the homogeneous Rademacher symbol} $\Psi_{p,q}^{\rm h}:\Gamma_{p,q}\to \Z$ by the homogenization of $\psi_{p,q}$, that is, we put 
\[ \Psi_{p,q}^{\rm h}(\gamma) = \lim_{n \to \infty} \frac{\psi_{p,q}(\gamma^n)}{n} = \lim_{n \to \infty} \frac{\Phi_{p,q}(\gamma^n)}{n} \] for every $\gamma \in \Gamma_{p,q}$. 
\end{definition} 
In comparison with Proposition \ref{prop.class-invariant}, for \emph{any} $\gamma, g \in \Gamma_{p,q}$ and $n\in \Z$, we have 
\[\Psi_{p,q}^{\rm h}(\gamma) = \Psi_{p,q}^{\rm h}(-\gamma) = - \Psi_{p,q}^{\rm h}(\gamma^{-1}) = \Psi_{p,q}^{\rm h}(g^{-1} \gamma g)\]
and $\Psi_{p,q}^{\rm h}(\gamma^n) = n \Psi_{p,q}^{\rm h}(\gamma)$. 

If $|\tr \gamma| \geq 2$, then $\Psi_{p,q}^{\rm h}(\gamma) = \Psi_{p,q} (\gamma)$ holds. 
If instead $|\tr \gamma| < 2$, then we have $\Psi_{p,q}^{\rm h}(\gamma) = 0$, while the original symbol satisfies 
\[
\Psi_{p,q}(S_p) = 
\begin{cases}
0 &\text{if } p = 2,\\
-q &\text{if } p > 2,
\end{cases} \ \ \ \ 
\Psi_{p,q}(U_q) = -p. 
\]
Note that we have $\tr S_p=2\cos \frac{\pi}{p}$ and $\tr U_q=2\cos \frac{\pi}{q}$. 

If $\tr \gamma \geq2$, then $\Psi_{p,q}^{\rm h}(\gamma)=\psi_{p,q}(\gamma)$ holds.

\begin{remark} Recently, in a view of the Manin--Drinfeld theorem, Burrin \cite{Burrin2020arXiv} introduced certain functions for a general Fuchsian group $\Gamma$ by using a recipe close to ours. Her functions may be seen as generalizations of our $\Phi_{p,q}$ and $\Psi_{p,q}$, for which our Theorem \ref{thm.cycleintegral} persist. She also proved that if $\Gamma$ is a non-cocompact Fuchsian group with genus zero, then the values of the functions are in $\Q$. 
Our result further claims for $\Gamma_{p,q}$ that the values are in $\Z$. 
\end{remark}


\section{Modular knots around the torus knot} \label{s4} 


In this section, we establish our main result, that is, the coincidence of the values of the Rademacher symbol and the linking number between modular knots and the $(p,q)$-torus knot. 

\subsection{The torus knot groups} \label{s4-1} 

Here, we prepare group theoretic lemmas, which enable us to clearly recognize the natural $\Z/r\Z$-cover 
$h:S^3-K_{p,q}\surj L(r,p-1)-\ol{K}_{p,q}$, as well as to make an explicit argument. 

Recall that the universal covering group $\wt{\SL_2}\R$ is the central extension of $\SL_2\R$ by $\Z$ corresponding to the 2-cocycle $W$, that is, $\wt{\SL_2}\R$ is $\SL_2\R\times \Z$ as a set and endowed with the multiplication 
\[ (\gamma_1, n_1) \cdot (\gamma_2, n_2) = (\gamma_1 \gamma_2, n_1 + n_2 + W(\gamma_1, \gamma_2)). \]
Let $P:\wt{\SL_2}\R\to \SL_2\R; (\gamma,n)\mapsto \gamma$ denote the natural projection and put $\wt{\Gamma}_{p,q}=P^{-1}(\Gamma_{p,q})$, so that we have $\wt{\Gamma}_{p,q}=\{(\gamma,n)\mid \gamma \in \Gamma_{p,q}\}< \wt{\SL_2}\R$. 
For each $\gamma \in \SL_2\R$, define the standard lift by $\wt{\gamma}=(\gamma,0)\in \wt{\SL_2}\R$. 
Then $\wt{\Gamma}_{p,q}$ is generated by $\wt{S}_p$ and $\wt{U}_q$, for which $\wt{S}_p\!^p=\wt{U}_q\!^q=(-I,1)$ holds.

Recall $r = pq - p - q$. We here explicitly define a discrete subgroup $G_r < \widetilde{\SL_2}\R$ by 
\[ G_r = \langle \widetilde{S}_p\!^r, \widetilde{U}_q\!^r \rangle = \langle \widetilde{S}_p\!^r, \widetilde{U}_q\!^r \mid (\widetilde{S}_p\!^r)^p = (\widetilde{U}_q\!^r)^q = (-I, 1)^r \rangle. \]
The following lemmas are due to Tsanov~\cite{Tsanov2013EM}. 
Since the original assertions are for $\PSL_2\R$, we partially attach proofs for later use.
For each group $G$, let $Z(G)$ denote the center, $[G,G]$ the commutator subgroup, and $G^{\rm ab}$ the abelianization. 

\begin{lemma}\label{lem.Gr-prop}
	\begin{itemize}
		\item[$(1)$] $Z(G_r)$ is generated by $(-I,1)^r= (-I, \frac{r+1}{2})$. 
		\item[$(2)$] $P(G_r) = \Gamma_{p,q}$.
	\end{itemize}
\end{lemma}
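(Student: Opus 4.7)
The plan for part (2) is the following. Since $r=pq-p-q\equiv -p\pmod q$ and $\equiv -q\pmod p$, one has $\gcd(r,p)=\gcd(r,q)=1$; moreover $r$ is odd, because exactly two of $p,q,pq$ are odd whenever $\gcd(p,q)=1$. Hence $\gcd(r,2p)=\gcd(r,2q)=1$, so that $\langle S_p^r\rangle=\langle S_p\rangle$ and $\langle U_q^r\rangle=\langle U_q\rangle$ inside $\SL_2\R$. Applying the projection $P$ then gives $P(G_r)\supseteq\langle S_p,U_q\rangle=\Gamma_{p,q}$, and the reverse inclusion is immediate since $\widetilde{S}_p^r$ and $\widetilde{U}_q^r$ already project into $\Gamma_{p,q}$.

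For part (1), the first step will be to show that $(-I,1)$ is central in $\widetilde{\SL_2}\R$, whence $(-I,1)^r$ is central in $G_r$ and $\langle(-I,1)^r\rangle\subseteq Z(G_r)$. Using the multiplication law $(\gamma_1,n_1)\cdot(\gamma_2,n_2)=(\gamma_1\gamma_2,\,n_1+n_2+W(\gamma_1,\gamma_2))$ together with the centrality of $-I$ in $\SL_2\R$, this reduces to the identity $W(-I,\gamma)=W(\gamma,-I)$ for every $\gamma\in\SL_2\R$, which is a quick case check from Asai's table in Subsection~\ref{s3-2} using $\sgn(-\gamma)=-\sgn(\gamma)$. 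A short computation with $W(-I,-I)=-1$ then yields the explicit formula $(-I,1)^r=(-I,(r+1)/2)$ (noting that $r$ is odd).

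The reverse inclusion $Z(G_r)\subseteq\langle(-I,1)^r\rangle$ will be obtained by recognizing $G_r$ as (a copy of) the $(p,q)$-torus knot group. Concretely, the assignment $x\mapsto\widetilde{S}_p^r$, $y\mapsto\widetilde{U}_q^r$ is well defined from $\langle x,y\mid x^p=y^q\rangle$ onto $G_r$, and sends the amalgamated element $x^p=y^q$ to $(-I,1)^r$. Once $G_r$ is identified with the amalgamated free product $\langle\widetilde{S}_p^r\rangle *_{\langle(-I,1)^r\rangle}\langle\widetilde{U}_q^r\rangle\cong\Z*_\Z\Z$, the center computation is a classical fact: by the standard normal-form analysis for amalgamated free products, any element of reduced length $\geq 1$ fails to commute with at least one of the two generators, so $Z(G_r)$ must lie in the amalgamated subgroup $\langle(-I,1)^r\rangle$.

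The main obstacle is precisely this identification of $G_r$ with the abstract amalgamated free product $\Z *_\Z \Z$, i.e.\ ruling out any further relations between $\widetilde{S}_p^r$ and $\widetilde{U}_q^r$ beyond $x^p=y^q$. This can be done topologically via the Raymond--Vasquez homeomorphism $G_r\backslash\widetilde{\SL_2}\R\cong S^3-K_{p,q}$ recalled in the Introduction, which presents $G_r$ as the fundamental group of the $(p,q)$-torus knot complement. Alternatively, one may verify it algebraically by checking that $\widetilde{S}_p^r$ and $\widetilde{U}_q^r$ generate infinite cyclic subgroups whose intersection in $\widetilde{\SL_2}\R$ is exactly $\langle(-I,1)^r\rangle$, using $\gcd(r,2p)=\gcd(r,2q)=1$ together with the central $\Z$-extension structure of $\widetilde{\SL_2}\R$.
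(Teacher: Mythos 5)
Your proof is correct in substance. Part (2) is essentially identical to the paper's argument: $r$ is odd and coprime to $p$ and $q$, hence invertible mod $2p$ and mod $2q$, so $S_p$ and $U_q$ already lie in $\langle S_p^r\rangle$ and $\langle U_q^r\rangle$. (One small slip: your parity count ``exactly two of $p,q,pq$ are odd'' is not right --- either all three are odd, or exactly one is --- but in both cases $pq-p-q$ is indeed odd, so the conclusion stands.) For part (1) you take a genuinely different, more self-contained route. The paper simply transports the known center: it asserts (following Tsanov) that $\widetilde{S}_p\mapsto\widetilde{S}_p^{\,r}$, $\widetilde{U}_q\mapsto\widetilde{U}_q^{\,r}$ defines an isomorphism $\widetilde{\Gamma}_{p,q}\congto G_r$ and that $Z(\widetilde{\Gamma}_{p,q})=\langle(-I,1)\rangle$, so $Z(G_r)=\langle(-I,1)^r\rangle$. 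You instead verify centrality of $(-I,1)$ directly from Asai's table (the check $W(-I,\gamma)=W(\gamma,-I)$ and the computation $(-I,1)^r=(-I,\tfrac{r+1}{2})$ are both correct), and then compute $Z(G_r)$ from scratch via the normal form in the amalgam $\Z*_{\Z}\Z$. Both arguments ultimately rest on the same structural input --- that the relevant group is the torus knot group $\langle x,y\mid x^p=y^q\rangle$ with no further relations --- and you are right to single this out as the real content; your topological justification via the Raymond--Vasquez homeomorphism $G_r\backslash\widetilde{\SL_2}\R\cong S^3-K_{p,q}$ (plus contractibility of $\widetilde{\SL_2}\R$) is valid and is exactly the external input the paper leans on. Your proposed ``algebraic alternative'' is the only weak point: knowing that $\langle\widetilde{S}_p^{\,r}\rangle\cap\langle\widetilde{U}_q^{\,r}\rangle=\langle(-I,1)^r\rangle$ does not by itself rule out further relations in the subgroup they generate, so as stated that sketch does not establish the amalgam structure; you would still need the topological (or Tsanov's group-theoretic) identification.
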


\begin{proof}
(1) An isomorphism $\widetilde{\Gamma}_{p,q}\congto G_r$ is defined by $\widetilde{S}_p \mapsto \widetilde{S}_p\!^r$ and $\widetilde{U}_q \mapsto \widetilde{U}_q\!^r$. 
Since $Z(\widetilde{\Gamma}_{p,q})$ is generated by $\widetilde{S}_p\!^p = \widetilde{U}_q\!^q = (-I,1)$, $Z(G_r)$ is generated by $(-I,1)^r= (-I, \frac{r+1}{2})$. 

(2) Since $r$ is an odd number coprime to both $p$ and $q$, there exist some $s,t \in \Z$ satisfying $rs \equiv 1 \pmod{2p}$ and $rt \equiv 1 \pmod{2q}$, hence $S_p\!^{rs} = S_p$ and $U_q\!^{rt} = U_q$. Thus we have $S_p, U_q \in P(G_r)$. 
\end{proof}

\begin{lemma} \label{abelianization}
	\begin{itemize}
		\item[{\rm (1)}] $[\widetilde{\Gamma}_{p,q}, \widetilde{\Gamma}_{p,q}]=[G_r, G_r]$.
		\item[{\rm (2)}] $\widetilde{\Gamma}_{p,q}\!^{\mathrm{ab}}\cong G_r\!^{\mathrm{ab}}\cong \Z$. 
		\item[{\rm (3)}] 
		$\widetilde{\Gamma}_{p,q}/G_r \cong \widetilde{\Gamma}_{p,q}\!^{\mathrm{ab}} / G_r\!^{\mathrm{ab}} \cong \Z/r\Z$.
	\end{itemize}
\end{lemma}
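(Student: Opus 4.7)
The plan is to first compute the abelianizations in part (2), then establish the ``center decomposition'' $\widetilde{\Gamma}_{p,q} = G_r \cdot Z$ (with $Z = Z(\widetilde{\Gamma}_{p,q})$), from which both (1) and (3) follow by routine manipulation.

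For part (2), the group $\widetilde{\Gamma}_{p,q}$ admits the presentation $\langle x, y \mid x^p = y^q \rangle$ (the classical torus knot group) with $x = \widetilde{S}_p$, $y = \widetilde{U}_q$, and the common value $x^p = y^q = (-I,1)$ is automatically central because it commutes with both generators (e.g.\ $y \cdot x^p = y \cdot y^q = y^{q+1} = x^p \cdot y$). Abelianizing yields $\Z^2/\langle (p, -q)\rangle \cong \Z$ since $\gcd(p,q)=1$, and under a convenient identification $[x] \mapsto q$ and $[y] \mapsto p$. The isomorphism $\widetilde{\Gamma}_{p,q} \congto G_r$ noted in Lemma~\ref{lem.Gr-prop}(1) then transports this to $G_r^{\mathrm{ab}} \cong \Z$.

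The key intermediate step is to show $\widetilde{\Gamma}_{p,q} = G_r \cdot Z$, where $Z = \langle z \rangle$ with $z = (-I,1)$. Since $\widetilde{\Gamma}_{p,q}/Z \cong \Gamma_{p,q}/\{\pm I\} \cong \Z/p \ast \Z/q$, it suffices to check that the images of $\widetilde{S}_p^{\,r}$ and $\widetilde{U}_q^{\,r}$ in the free product generate. The coprimalities $\gcd(r,p) = \gcd(-q,p) = 1$ and $\gcd(r,q) = \gcd(-p,q) = 1$ guarantee that these images generate the $\Z/p$ and $\Z/q$ factors respectively. With this in hand, part (1) is immediate: for any $g_1, g_2 \in \widetilde{\Gamma}_{p,q}$, write $g_i = h_i z^{a_i}$ with $h_i \in G_r$; centrality of $z$ gives $[g_1,g_2] = [h_1,h_2] \in [G_r, G_r]$, and the reverse inclusion $[G_r, G_r] \subset [\widetilde{\Gamma}_{p,q}, \widetilde{\Gamma}_{p,q}]$ is automatic.

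For part (3), combining $\widetilde{\Gamma}_{p,q} = G_r \cdot Z$, $Z \cap G_r = \langle z^r \rangle$ (from Lemma~\ref{lem.Gr-prop}(1)), and centrality of $Z$ shows that $G_r$ is normal in $\widetilde{\Gamma}_{p,q}$ (because $(h_0 z^a) G_r (h_0 z^a)^{-1} = h_0 G_r h_0^{-1} = G_r$), and that the map $\widetilde{\Gamma}_{p,q} \to Z/\langle z^r \rangle \cong \Z/r\Z$ defined by $h_0 z^a \mapsto z^a \bmod \langle z^r\rangle$ is a well-defined surjection with kernel $G_r$, yielding $\widetilde{\Gamma}_{p,q}/G_r \cong \Z/r\Z$. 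For the first isomorphism in (3), the inclusion $G_r \hookrightarrow \widetilde{\Gamma}_{p,q}$ induces $G_r^{\mathrm{ab}} \to \widetilde{\Gamma}_{p,q}^{\mathrm{ab}}$, which under the identifications of (2) sends the generator $q \in \Z \cong G_r^{\mathrm{ab}}$ (corresponding to $\widetilde{S}_p^{\,r}$) to $r \cdot q \in \Z \cong \widetilde{\Gamma}_{p,q}^{\mathrm{ab}}$, hence is multiplication by $r$ with cokernel $\Z/r\Z$. The main obstacle is verifying the center decomposition, which ultimately reduces to the Fuchsian-group free-product structure of $\widetilde{\Gamma}_{p,q}/Z$ recalled in Section~\ref{s2}; once that is in place, everything else is formal group theory.
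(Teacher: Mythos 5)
Your proof is correct, and it is worth noting that the paper itself does not prove this lemma at all: it is attributed to Tsanov's work, with proofs ``partially attached'' only to the preceding Lemma~\ref{lem.Gr-prop}. So your argument genuinely fills a gap rather than duplicating the paper. The route you take --- establishing the central decomposition $\widetilde{\Gamma}_{p,q} = G_r \cdot Z$ by checking that $\widetilde{S}_p^{\,r}$ and $\widetilde{U}_q^{\,r}$ still generate the quotient $\widetilde{\Gamma}_{p,q}/Z \cong \Z/p\Z * \Z/q\Z$ (using $r \equiv -q \bmod p$ and $r \equiv -p \bmod q$), and then deducing (1) from centrality of $Z$ and (3) from $G_r \cap Z = \langle z^r \rangle$ --- is clean and matches the spirit of Lemma~\ref{lem.Gr-prop}(2), which is proved by the same coprimality observation. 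All the individual steps check out: the abelianization computation $\Z^2/\langle (p,-q)\rangle \cong \Z$ with $[x]\mapsto q$, $[y]\mapsto p$; the well-definedness and kernel of $h_0 z^a \mapsto z^a \bmod \langle z^r\rangle$ (which needs $G_r \cap Z = \langle z^r\rangle$, and this does follow from Lemma~\ref{lem.Gr-prop}(1) since $\langle z^r\rangle \subseteq G_r \cap Z \subseteq Z(G_r) = \langle z^r\rangle$); and the identification of $G_r^{\mathrm{ab}} \to \widetilde{\Gamma}_{p,q}^{\mathrm{ab}}$ with multiplication by $r$, whose injectivity together with (1) reconciles the two descriptions of $\Z/r\Z$ in (3). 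The one place where you lean on an unproved assertion is the claim that $\widetilde{\Gamma}_{p,q}$ has the torus-knot presentation $\langle x,y \mid x^p = y^q\rangle$: the surjection from that group is clear from the stated relation $\widetilde{S}_p^{\,p}=\widetilde{U}_q^{\,q}=(-I,1)$, but injectivity deserves a sentence (e.g.\ both groups are central extensions of $\Z/p\Z * \Z/q\Z$ by the infinite cyclic group generated by the image of $x^p$, and the surjection restricts to an isomorphism on centers and on quotients). This is standard and consistent with the presentation of $G_r$ displayed in Subsection~\ref{s4-1}, so it is a presentational gap rather than a mathematical one.
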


As mentioned in Section 1, we have the following. 
\begin{proposition}[\cite{RaymondVasquez1981}, \cite{Tsanov2013EM}]
{\rm (1)} 
 The spaces $\Gamma_{p,q}\backslash \SL_2\R\cong \wt{\Gamma}_{p,q} \backslash \wt{\SL_2}\R$ are homeomorphic to the exterior of a knot $\ol{K}_{p,q}$ in the lens space $L(r,p-1)$, where $\ol{K}_{p,q}$ is the image of a $(p,q)$-torus knot via the $\Z/r\Z$-cover $S^3\surj L(r,p-1)$. 

{\rm (2) 
} The space $G_r\backslash \wt{\SL_2}\R$ is homeomorphic to the exterior of the torus knot $K_{p,q}$ in $S^3$. 
\end{proposition}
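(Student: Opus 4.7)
The plan is to exhibit a Seifert fibered structure on each quotient, identify the base orbifold together with the local invariants, and then appeal to the classification of Seifert fibered $3$-manifolds. First, the map $\SL_2\R\to \bbH$, $\gamma\mapsto \gamma i$, is a principal $\SO_2$-bundle, so lifting to the universal cover produces a principal $\R$-bundle $\wt{\SL_2}\R\to \bbH$. The left action of $\wt{\Gamma}_{p,q}$ on $\wt{\SL_2}\R$ preserves this fibration up to rotation and descends, through $P:\wt{\SL_2}\R\to \SL_2\R$, to the M\"obius action of $\Gamma_{p,q}$ on $\bbH$. Consequently $\wt{\Gamma}_{p,q}\backslash\wt{\SL_2}\R$ is a Seifert fibered manifold with base orbifold $\Gamma_{p,q}\backslash \bbH$ — a $2$-sphere with two cone points of orders $p$ and $q$ coming from the fixed points $a$ and $b$ of $S_p$ and $U_q$, and one cusp at $i\infty$. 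By Lemma~\ref{lem.Gr-prop}(2) the same holds verbatim for $G_r\backslash\wt{\SL_2}\R$, since $P(G_r)=\Gamma_{p,q}$.

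For part (2), the subgroup $G_r=\langle \wt{S}_p^{\,r},\wt{U}_q^{\,r}\rangle$ admits the presentation $\langle x,y\mid x^p=y^q\rangle$ via $x\mapsto \wt{S}_p^{\,r}$ and $y\mapsto \wt{U}_q^{\,r}$, which is precisely the classical presentation of $\pi_1(S^3-K_{p,q})$. As $G_r$ acts freely and properly discontinuously on $\wt{\SL_2}\R$, the quotient is an aspherical Seifert fibered $3$-manifold with a single torus boundary whose base orbifold is the disk with cone points of orders $p$ and $q$. I would then read off the local Seifert invariant at the cone point $a$ from the way $\wt{S}_p^{\,rp}=(-I,1)^r$ sits with respect to the regular fiber class generated by $(I,1)$, and symmetrically at $b$. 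By the classification of compact Seifert fibered manifolds with boundary, these data determine the manifold up to fiber-preserving homeomorphism and identify it with the standard Seifert fibration of $S^3-K_{p,q}$.

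Part (1) then reduces to part (2) via the free action of the deck group $\wt{\Gamma}_{p,q}/G_r\cong \Z/r\Z$ given by Lemma~\ref{abelianization}(3). The quotient of $G_r\backslash\wt{\SL_2}\R=S^3-K_{p,q}$ by this $\Z/r\Z$-action is $\wt{\Gamma}_{p,q}\backslash\wt{\SL_2}\R$. The action extends across the boundary torus, filling in $K_{p,q}$, to a free $\Z/r\Z$-action on $S^3$ whose quotient is a lens space $L(r,k)$, and the image of $K_{p,q}$ is by construction the knot $\ol{K}_{p,q}$.

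The main obstacle is the exact identification of the second Seifert invariants and the corresponding parameter $k=p-1$ of the lens space. This requires a careful bookkeeping of a meridian-longitude framing of $K_{p,q}$ and of how the chosen generator of $\wt{\Gamma}_{p,q}/G_r$ — represented, for instance, by $\wt{S}_p$ modulo $G_r$ — acts on that framing and permutes the fibers over the two cone points. Once this computation is performed, both statements follow.
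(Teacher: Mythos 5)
The paper does not actually prove this proposition; it cites it, noting that Raymond--Vasquez established part (2) ``by using the theory of Seifert fibrations'' and that Tsanov gave explicit homeomorphisms for both parts. Your outline is therefore essentially the Raymond--Vasquez route, and its skeleton is sound: the principal $\R$-bundle $\wt{\SL_2}\R\to\bbH$ descends to Seifert fibrations of both quotients over the orbifold disk with cone points of orders $p$ and $q$; the groups act freely since $\wt{\SL_2}\R$ is torsion-free; $G_r$ is normal in $\wt{\Gamma}_{p,q}$ with quotient $\Z/r\Z$ (Lemma \ref{abelianization}), so (1) does reduce to (2) plus an equivariant filling; and you are right that the presentation $\langle x,y\mid x^p=y^q\rangle$ alone cannot close the argument, so that passing through Seifert invariants is the correct move.

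The genuine gap is that the step you defer is the entire content of the proposition. Over the base orbifold $D^2(p,q)$ there is a $\Z^2$-worth of Seifert fibered manifolds distinguished precisely by the local invariants $(\beta_1/p,\beta_2/q)$, all with fundamental group $\langle x,y\mid x^p=y^q\rangle$ up to the peripheral data; only one of them is the exterior of $K_{p,q}$ in $S^3$, and the others are Brieskorn-type complements in other lens spaces or Seifert homology spheres. So until you compute how $\wt{S}_p^{\,rp}=\wt{U}_q^{\,qr}=(-I,1)^r$ sits against the regular fiber class and extract the filling slope, you have not identified the manifold, the ambient space, or the parameter $p-1$ (Tsanov's answer is $L(r,p(q_1-p_1+pp_1))$ for $pp_1+qq_1=1$, which only after Brody's classification of lens spaces is seen to equal $L(r,p-1)$ --- a sign that this bookkeeping is delicate). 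A secondary, smaller point: in (1) you should justify that the deck action preserves the meridional slope of the boundary torus so that it extends freely over the filling solid torus; this follows from equivariance of the Seifert structure but is not automatic from ``covering space action on the complement.'' As written, the proposal is an accurate roadmap to the cited proofs rather than a proof.
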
 
The second assertion was established by Raymond--Vasquez by using the theory of Seifert fibrations in \cite{RaymondVasquez1981}. 
Tsanov gave explicit homeomorphisms for both cases in \cite{Tsanov2013EM}. 
We remark that Tsanov discussed the lens space $L(r, p(q_1 - p_1 + pp_1))$ for a pair $(p_1, q_1) \in \Z^2$ with $pp_1 + qq_1 = 1$, which is homeomorphic to $L(r,p-1)$ by Brody's theorem. 

Since the fundamental group is given by $\pi_1(\Gamma_{p,q} \backslash \SL_2\R) \cong \pi_1(\widetilde{\Gamma}_{p,q} \backslash \widetilde{\SL_2}\R) \cong \widetilde{\Gamma}_{p,q}$, by the Hurewicz theorem and the lemmas above, we obtain the following. 

\begin{lemma} 
The groups $G_r\cong\pi_1(S^3-K_{p,q})$ are the kernels of any surjective homomorphism $\wt{\Gamma}_{p,q}\cong\pi_1(L(r,p-1)-\ol{K}_{p,q})\surj \Z/r\Z$. 
We may identify 
the corresponding $\Z/r\Z$-cover $h:S^3-K_{p,q}\to L(r,p-1)-\ol{K}_{p,q}$ with the natural surjection $G_r\backslash \wt{\SL_2}\R\surj \wt{\Gamma}_{p,q}\backslash \wt{\SL_2}\R$. 

The groups $G_r\!^{\rm ab}\cong H_1(S^3-K_{p,q};\Z)\cong \Z$ may be seen as the subgroups of $\wt{\Gamma}_{p,q}\!^{\rm ab}\cong H_1(L(r,p-1)-\ol{K}_{p,q};\Z)\cong \frac{\,1\,}{r}\Z$ of index $r$ in a natural way. 
\end{lemma}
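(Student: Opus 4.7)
The plan is to build the lemma on top of Lemma~\ref{abelianization}, which already provides the crucial facts $[\widetilde{\Gamma}_{p,q},\widetilde{\Gamma}_{p,q}]=[G_r,G_r]$, $\widetilde{\Gamma}_{p,q}^{\mathrm{ab}}\cong G_r^{\mathrm{ab}}\cong\Z$, and $\widetilde{\Gamma}_{p,q}/G_r\cong\Z/r\Z$.

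First I would prove the kernel statement. Any surjection $\varphi:\widetilde{\Gamma}_{p,q}\surj\Z/r\Z$ vanishes on $[\widetilde{\Gamma}_{p,q},\widetilde{\Gamma}_{p,q}]$, hence descends to a surjection $\overline{\varphi}:\widetilde{\Gamma}_{p,q}^{\mathrm{ab}}\cong\Z\surj\Z/r\Z$, and every such surjection of abelian groups has kernel $r\Z$, regardless of which unit of $\Z/r\Z$ is chosen as the image of a generator. Therefore $\ker\varphi$ is the unique subgroup of $\widetilde{\Gamma}_{p,q}$ containing $[\widetilde{\Gamma}_{p,q},\widetilde{\Gamma}_{p,q}]$ whose image in $\widetilde{\Gamma}_{p,q}^{\mathrm{ab}}$ has index~$r$. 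By Lemma~\ref{abelianization}(1) and (3), the subgroup $G_r$ contains $[\widetilde{\Gamma}_{p,q},\widetilde{\Gamma}_{p,q}]$ and satisfies $[\widetilde{\Gamma}_{p,q}:G_r]=r$, so its image in $\widetilde{\Gamma}_{p,q}^{\mathrm{ab}}$ is also of index~$r$. Uniqueness then forces $\ker\varphi=G_r$.

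Next I would handle the identification of covers. The natural map $G_r\backslash\widetilde{\SL_2}\R\surj\widetilde{\Gamma}_{p,q}\backslash\widetilde{\SL_2}\R$ is a regular covering with deck group $\widetilde{\Gamma}_{p,q}/G_r\cong\Z/r\Z$. Combining this with the Raymond--Vasquez and Tsanov homeomorphisms recalled above, we obtain a regular $\Z/r\Z$-cover of $L(r,p-1)-\overline{K}_{p,q}$ corresponding to the subgroup $G_r\lhd\widetilde{\Gamma}_{p,q}\cong\pi_1(L(r,p-1)-\overline{K}_{p,q})$. On the other hand, the lens space covering $S^3\surj L(r,p-1)$ restricts to a regular $\Z/r\Z$-cover $h:S^3-K_{p,q}\surj L(r,p-1)-\overline{K}_{p,q}$ (since $K_{p,q}$ is invariant under the free $\Z/r\Z$-action), which must correspond to the kernel of some surjection $\pi_1(L(r,p-1)-\overline{K}_{p,q})\surj\Z/r\Z$. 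By the previous paragraph this kernel is $G_r$ as well, and so the classification of regular coverings identifies the two $\Z/r\Z$-covers.

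Finally, the homology statement is a direct consequence. The Hurewicz theorem gives $H_1(S^3-K_{p,q};\Z)\cong G_r^{\mathrm{ab}}\cong\Z$ and $H_1(L(r,p-1)-\overline{K}_{p,q};\Z)\cong\widetilde{\Gamma}_{p,q}^{\mathrm{ab}}\cong\Z$, and by Lemma~\ref{abelianization}(3) the inclusion $G_r\inj\widetilde{\Gamma}_{p,q}$ induces an inclusion on abelianizations of index~$r$. Rescaling the identification $\widetilde{\Gamma}_{p,q}^{\mathrm{ab}}\cong\Z$ by the factor $1/r$ so that $G_r^{\mathrm{ab}}$ is sent to the standard copy $\Z\subset\tfrac{1}{r}\Z$, the ambient group becomes $\tfrac{1}{r}\Z$ as claimed; this normalization is the natural one in view of Theorem~\ref{our-main-theorem1}, where lens-space linking numbers live in $\tfrac{1}{r}\Z$. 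The main obstacle is a cosmetic one, namely verifying that the lens-space construction of $h$ really does yield a regular $\Z/r\Z$-cover of the knot complement (as opposed to an irregular cover or one with a different deck group); this follows from Tsanov's explicit description, but should be stated carefully to avoid circularity.
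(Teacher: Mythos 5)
Your argument is correct and follows exactly the route the paper intends: it only justifies this lemma with the remark that it follows ``by the Hurewicz theorem and the lemmas above,'' and your proposal is a faithful elaboration of that, using Lemma~\ref{abelianization} to pin down the kernel of any surjection onto $\Z/r\Z$ as the unique index-$r$ subgroup containing the common commutator subgroup, the classification of regular covers to identify $h$ with the natural surjection, and Hurewicz for the homology statement. The point you flag about regularity of the restricted cover is handled correctly and is not a gap.
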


The following diagram visualizes the situation. Here, for $G=\wt{\Gamma}_{p,q}$ and $G_r$, $G'$ denotes the commutator subgroup of $G$ and  
$Z'(G)$ denotes the subgroup of $Z(G)\cong \Z$ with index 2. The $\Z$-covers of $L(r,p-1)-\ol{K}_{p,q}$ and $S^3-K_{p,q}$ are denoted by $L_\infty=X_\infty$. 
\[
\def\labelstyle{\normalsize}
\xymatrix{
	\SL_2\R \ar[dd]_{\Gamma_{p,q}} & \bullet \ar[l] \ar@{..>}[dd] & & & & \widetilde{\SL_2}\R \ar[llll]^{Z'(G_r) \cong \Z} \ar@/_18pt/[lllll]_{Z'(\widetilde{\Gamma}_{p,q}) \cong \Z} \ar[llllldd]_{\widetilde{\Gamma}_{p,q}} \ar[lllldd]^{G_r} \ar[dd]^{{G_r}' \cong {\widetilde{\Gamma}_{p,q}}'}\\
	 & & & & & \\
	L(r,p-1) - \overline{K}_{p,q} & S^3-K_{p,q} \ar[l] & & & & L_\infty = X_\infty \ar[llll]_{\Z} \ar@/^18pt/[lllll]^{\frac{\,1\,}{r}\Z}
}
\]

\subsection{Modular knots in the lens space} 
\label{s4-2} 

In this subsection, we introduce the notion of modular knots for $\Gamma_{p,q}$ around the $(p,q)$-torus knot in the lens space $L(r,p-1)$, recall the notions of the linking number and the winding number, 
and establish the former half of our main result on the linking number. 

\subsubsection{Modular knots} 

Let us first define a modular knot in the lens space. 
\begin{definition}\label{def.modularknot} 
{\rm (1)} Let $\gamma = \smat{a&b\\c&d} \in \Gamma_{p,q}$ be a primitive element with $a+d> 2$ and $c > 0$, so that $\gamma$ is diagonalized by the scaling matrix $M_\gamma$ and its larger eigenvalue satisfies $\xi_\gamma>1$. 
Define an oriented simple closed curve $C_\gamma(t)$ in $\Gamma_{p,q} \backslash \SL_2\R$ by
\[ C_\gamma(t) = M_\gamma \spmx{e^t & 0 \\ 0 & e^{-t}}, 
\quad (0 \leq t \leq \log \xi_\gamma). \] 
We call the image $C_\gamma$ in $\Gamma_{p,q} \backslash \SL_2\R \cong L(r, p-1) - \overline{K}_{p,q}$ with the induced orientation \emph{the modular knot associated to} $\gamma$. 
	
{\rm (2)} Let $\gamma \in \Gamma_{p,q}$ be any hyperbolic element, so that we have $\gamma=\pm \gamma_0^n$ for some primitive element $\gamma_0=\smat{a&b\\c&d} \in \Gamma_{p,q}$ with $a+d>2$ and $c>0$, and $n\in \Z$. 
We define \emph{the modular knot associated to} $\gamma$ by $C_\gamma=nC_{\gamma_0}$ with multiplicity. 
\end{definition}

\subsubsection{Linking numbers} 

A general theory of the linking number in a rational homology 3-sphere can be found in \cite[Section 77]{SeifertThrelfall1934}. 
Since $H_1(L(r,p-1);\Z)\cong \Z/r\Z$, 
the linking number in $L(r,p-1)$ takes value in $\frac{\,1\,}{r}\Z$. 
Via a standard homeomorphism $\Gamma_{p,q}\backslash \SL_2\R \congto T_1(\Gamma_{p,q}\backslash \bbH)$ to the unit tangent bundle, the knot $\ol{K}_{p,q}$ may be seen as the cusp orbit with a natural orientation. 
Let $\mu$ be a standard meridian of $\ol{K}_{p,q}$ and consider the isomorphism $H_1(L(r,p-1)-\ol{K}_{p,q};\Z)\congto \frac{\,1\,}{r}\Z$ sending $[\mu]$ to 1. 
A standard meridian $\mu$ may be explicitly given by the curve $c(t)$ in the proof of Proposition \ref{prop.tildeDelta} with $0\leq t\leq \lambda$. 
\begin{definition} 
The linking number $\mathrm{lk}(K, \overline{K}_{p,q})$ of an oriented knot $K$ in $L(r,p-1) - \overline{K}_{p,q}$ 
and the knot $\ol{K}_{p,q}$ is defined as 
the image of $[K]$ via the isomorphism $H_1(L(r,p-1)-\ol{K}_{p,q};\Z)\congto \frac{\,1\,}{r}\Z$. 

This definition naturally extends to a knot with multiplicity, that is, a formal sum of knots with coefficients in $\Z$. 
\end{definition}

\subsubsection{Winding numbers} 
In order to compute the linking number, let us recall the notion of the winding number. 
Let the unit circle $\mathbf{T} = \{|z| = 1\} \subset \C$ be endowed with the counter-clockwise orientation 
and let $H_1(\C^\times; \Z) \congto \Z$ denote the isomorphism sending $[\mathbf{T}]$ to 1. 

\begin{definition} \label{def.wind} 
For an oriented closed curve $C$ in $\C^\times$, the \emph{winding number} $\mathrm{ind}(C,0) \in \Z$ is defined to be the image of $[C]$ via the isomorphism $H_1(\C^\times; \Z) \congto \Z$. 
Equivalently, it is defined by the cycle integral as 
\[\mathrm{ind}(C,0) = \frac{\,1\,}{2\pi i} \int_C \frac{dz}{z}.\] 
\end{definition}
The equivalence of two definitions is verified by Cauchy's integral theorem. 

We define a lift $\widetilde{\Delta}_{p,q}: \SL_2 \R \to \C^\times$ of the cusp form $\Delta_{p,q}(z)$ by
\[ \widetilde{\Delta}_{p,q} (g) = j(g, i)^{-2pq} \Delta_{p,q}(g i). \]
Since $\Delta_{p,q}(z)$ has no zeros on $\bbH$ and satisfies $\widetilde{\Delta}_{p,q}(\gamma g) = \widetilde{\Delta}_{p,q}(g)$ for any $\gamma \in \Gamma_{p,q}$, 
we obtain the induced continuous function $\widetilde{\Delta}_{p,q}: \Gamma_{p,q} \backslash \SL_2\R \to \C^\times$. 

\begin{proposition}\label{prop.winding=psi} 
For a modular knot $C_\gamma$ defined in Definition \ref{def.modularknot} (1), we have
\[ \mathrm{ind}(\widetilde{\Delta}_{p,q}(C_\gamma), 0) = \psi_{p,q}(\gamma). \]
\end{proposition}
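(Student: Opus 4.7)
The plan is to compute the winding number by lifting the modular knot $C_\gamma$ to $\SL_2\R$, expressing $\log \widetilde{\Delta}_{p,q}$ in terms of $\log j$ and $F_{p,q}$, and matching the result to the defining relation of $\psi_{p,q}$ (Definition~\ref{def.psi}), with careful bookkeeping of branches via the $2$-cocycle $W$ (Definition~\ref{def.W}).

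Concretely, I would use the integral formula in Definition~\ref{def.wind} and lift the closed curve $C_\gamma$ in $\Gamma_{p,q}\backslash\SL_2\R$ to the path $t\mapsto M_\gamma\spmx{e^t&0\\0&e^{-t}}$ in $\SL_2\R$, whose endpoints are $M_\gamma$ and $\gamma M_\gamma$. Since $\widetilde{\Delta}_{p,q}$ is $\Gamma_{p,q}$-invariant on the left, the images of the two endpoints coincide in $\C^\times$, and the winding number equals
\[
\frac{1}{2\pi i}\Big(\log\widetilde{\Delta}_{p,q}(\gamma M_\gamma)-\log\widetilde{\Delta}_{p,q}(M_\gamma)\Big)
\]
with the logarithms taken along the continuous lift of $\widetilde{\Delta}_{p,q}(C_\gamma(t))$. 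Writing $\log\widetilde{\Delta}_{p,q}(g)=-2pq\log j(g,i)+F_{p,q}(gi)$, the $F_{p,q}$-contribution is unambiguous because $F_{p,q}$ is a holomorphic function on $\bbH$ and equals $R_{p,q}(\gamma,M_\gamma i)$.

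The main technical step is tracking the branch of $\log j(C_\gamma(t),i)$ along the path. A direct calculation gives $j(C_\gamma(t),i)=e^{-t}\bigl(\delta+ie^{2t}\gamma'\bigr)$ where $M_\gamma=\spmx{\alpha&\beta\\\gamma'&\delta}=\frac{1}{\sqrt{w_\gamma-w_\gamma'}}\spmx{w_\gamma&w_\gamma'\\1&1}$, so $\gamma',\delta>0$. Hence the curve lies entirely in the first quadrant of $\C^\times$, and the principal branch of $\log$ is continuous along it. Consequently the continuous lift of $\log j(\gamma M_\gamma,i)-\log j(M_\gamma,i)$ agrees with the principal-branch computation, which by Definition~\ref{def.W} equals $\log j(\gamma,M_\gamma i)+2\pi i\,W(\gamma,M_\gamma)$. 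To see that $W(\gamma,M_\gamma)=0$, one checks the three signs in Asai's table: $\sgn(\gamma)=1$ by $c>0$, $\sgn(M_\gamma)=1$ by $\gamma'>0$, and $\sgn(\gamma M_\gamma)=1$ since the factorization $\gamma M_\gamma=M_\gamma\spmx{\xi_\gamma&0\\0&\xi_\gamma^{-1}}$ gives its bottom-left entry as $\xi_\gamma/\sqrt{w_\gamma-w_\gamma'}>0$.

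Combining the two contributions yields
\[
2\pi i\cdot\mathrm{ind}(\widetilde{\Delta}_{p,q}(C_\gamma),0)=-2pq\log j(\gamma,M_\gamma i)+R_{p,q}(\gamma,M_\gamma i),
\]
and substituting the defining identity $R_{p,q}(\gamma,z)=2pq\log j(\gamma,z)+2\pi i\psi_{p,q}(\gamma)$ from Definition~\ref{def.psi} immediately gives the claim. The main obstacle I anticipate is justifying that the continuous lift of the logarithm along the path really matches the principal-branch value at the endpoints; once the first-quadrant argument above is in place, the rest is routine. As a sanity check, one may also re-derive the proof of Theorem~\ref{thm.cycleintegral} in parallel and observe that the two computations differ only by bookkeeping of the holomorphic vs.\ non-holomorphic parts of $E_2^{(p,q),*}$, so Proposition~\ref{prop.winding=psi} can be viewed as a cocycle-theoretic avatar of the cycle-integral formula.
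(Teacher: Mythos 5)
Your proof is correct, but it takes a genuinely different route from the paper's. The paper disposes of this proposition in three lines: it rewrites $\frac{1}{2\pi i}\oint d\widetilde{\Delta}_{p,q}/\widetilde{\Delta}_{p,q}$ as the cycle integral $r\int_{z_0}^{\gamma z_0}E_2^{(p,q),*}(z)\,dz$ (the term $-2pq\,d\log j(C_\gamma(t),i)$ being exactly the contribution of the non-holomorphic part $-\frac{1}{\vol(\Gamma_{p,q}\backslash\bbH)}\frac{1}{\Im(z)}$ of the harmonic Maass form) and then quotes Theorem~\ref{thm.cycleintegral}. You instead bypass $E_2^{(p,q),*}$ entirely and track the argument of $j(C_\gamma(t),i)=e^{-t}(\delta+i\gamma'e^{2t})$ directly, using that $\gamma'=\delta=(w_\gamma-w'_\gamma)^{-1/2}>0$ keeps this curve in the open first quadrant, and then feed $R_{p,q}(\gamma,z_0)-2pq\log j(\gamma,z_0)$ into Definition~\ref{def.psi}. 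This is in effect an unwinding of the computation hidden inside the proof of Theorem~\ref{thm.cycleintegral}, where $\int_{z_0}^{\gamma z_0}dz/\Im(z)=-2i\log j(\gamma,M_\gamma i)$ is established by the same first-quadrant observation; so the two arguments have the same mathematical content, but yours is self-contained and makes the branch bookkeeping explicit, while the paper's is shorter given that the cycle-integral characterization is needed anyway for Theorem~\ref{Rad-omnibus-theorem1}(2). Two minor points: from Definition~\ref{def.W} the identity should read $\log j(\gamma M_\gamma,i)-\log j(M_\gamma,i)=\log j(\gamma,M_\gamma i)-2\pi i\,W(\gamma,M_\gamma)$ rather than with $+2\pi i\,W$, which is harmless since you verify $W(\gamma,M_\gamma)=0$ (and, having shown the path stays in the first quadrant and that $\Im j(\gamma,M_\gamma i)=c\,\Im(M_\gamma i)>0$, the appeal to Asai's table is not strictly needed); and the principal branch you use for $\log j(\gamma,z_0)$ agrees with the convention $\Im\log\in[-\pi,\pi)$ of Definition~\ref{def.psi} precisely because $c>0$ --- automatic, but worth stating.
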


\begin{proof} 
Recall $\frac{d}{dz} \log \Delta_{p,q}(z) = 2\pi i r E_2^{(p,q)}(z)$ and put $z_0 = M_\gamma i$. 
Then by Theorem \ref{thm.cycleintegral}, we obtain 
\begin{equation*} \begin{aligned} 
\mathrm{ind}(\widetilde{\Delta}_{p,q}(C_\gamma), 0) &= \frac{\,1\,}{2\pi i} \int_{\widetilde{\Delta}_{p,q}(C_\gamma)} \frac{dz}{z} \\
&= \frac{\,1\,}{2\pi i} \int_0^{\log \xi_\gamma} \frac{d\widetilde{\Delta}_{p,q}(C_\gamma(t))}{\widetilde{\Delta}_{p,q}(C_\gamma(t))}\\
&= r \int_{z_0}^{\gamma z_0} E_2^{(p,q),*}(z) dz\\
&=\psi_{p,q}(\gamma).\\[-6mm] 
\end{aligned} \end{equation*} 
\end{proof} 

\begin{proposition} \label{prop.tildeDelta}
The function $\widetilde{\Delta}_{p,q}$ induces an isomorphism $H_1(\Gamma_{p,q} \backslash \SL_2\R;\Z) \congto H_1(\C^\times;\Z)$.
\end{proposition}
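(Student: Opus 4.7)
The plan is to use the Hurewicz theorem to reduce the statement to verifying that a single explicit loop has image of winding number $\pm 1$, and then to read that winding number off from Proposition \ref{prop.winding=psi}.

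First I would identify both first homology groups with $\Z$. Since the universal cover $\widetilde{\SL_2}\R$ is contractible and $\widetilde{\Gamma}_{p,q}$ acts freely on it, we have $\pi_1(\Gamma_{p,q}\backslash \SL_2\R)\cong \widetilde{\Gamma}_{p,q}$; Hurewicz together with Lemma \ref{abelianization}(2) then gives $H_1(\Gamma_{p,q}\backslash\SL_2\R;\Z)\cong \widetilde{\Gamma}_{p,q}^{\mathrm{ab}}\cong\Z$, while $H_1(\C^\times;\Z)\cong\Z$ is standard. The continuous map $\widetilde{\Delta}_{p,q}$ therefore induces a homomorphism $\Z\to\Z$ on first homology, which will automatically be an isomorphism once we produce a single cycle whose image has winding number $\pm 1$.

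For $(p,q)\neq(2,3)$ I would take the primitive hyperbolic element $\gamma\in\Gamma_{p,q}$ supplied by Lemma \ref{lem.psi=1}, which satisfies $\tr\gamma>2$, $c>0$, and $\psi_{p,q}(\gamma)=1$; its modular knot $C_\gamma$ is well defined by Definition \ref{def.modularknot}, and Proposition \ref{prop.winding=psi} immediately yields $\mathrm{ind}(\widetilde{\Delta}_{p,q}(C_\gamma),0)=\psi_{p,q}(\gamma)=1$. In the residual classical case $(p,q)=(2,3)$, the element of Lemma \ref{lem.psi=1} degenerates to the parabolic $T_{2,3}$, so I would substitute the explicit $\gamma_0=\smat{3&2\\1&1}\in\SL_2\Z$, which is primitive hyperbolic with $\tr\gamma_0=4>2$ and $c=1>0$. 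A brief calculation with the Dedekind sum $s(3,1)=0$ together with the formulas in Subsection \ref{subsec.variants} gives $\Phi_{2,3}(\gamma_0)=4$ and hence $\Psi_{2,3}(\gamma_0)=1=\psi_{2,3}(\gamma_0)$, so Proposition \ref{prop.winding=psi} again supplies the required winding number.

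The obstacle one might anticipate, namely identifying the induced homomorphism on $\pi_1\cong \widetilde{\Gamma}_{p,q}$ with the character $\chi_{p,q}$ of Theorem \ref{thm.chi}, has already been absorbed into Proposition \ref{prop.winding=psi} via the cycle-integral formula of Theorem \ref{thm.cycleintegral}; after the Hurewicz translation, the only remaining work is the concrete choice of a hyperbolic $\gamma$ with $\psi_{p,q}(\gamma)=1$, which is what the two cases above provide.
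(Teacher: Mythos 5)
Your proof is correct and follows the paper's argument almost verbatim: both reduce the claim to surjectivity of a homomorphism $\Z\to\Z$ and, for $(p,q)\neq(2,3)$, both exhibit the hyperbolic element of Lemma \ref{lem.psi=1} with $\psi_{p,q}(\gamma)=1$ and invoke Proposition \ref{prop.winding=psi}. The only divergence is the exceptional case $(p,q)=(2,3)$, where the element of Lemma \ref{lem.psi=1} is parabolic: the paper handles this by taking a lift $C_y$ of a horocycle at large height and reading the winding number $1$ directly from the expansion $\Delta_{2,3}(z)=q_1+O(q_1^2)$, whereas you substitute the explicit primitive hyperbolic matrix $\smat{3&2\\1&1}$ and compute $\psi_{2,3}=\Phi_{2,3}-3\sgn(c(a+d))=4-3=1$ via the Dedekind sum $s(3,1)=0$. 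Both are valid; the paper's horocycle is arguably more self-contained (it needs only the leading Fourier coefficient of $\Delta_{2,3}$ and no primitivity check), while your variant has the mild virtue of treating both cases through the same mechanism, Proposition \ref{prop.winding=psi}, at the cost of verifying that $\smat{3&2\\1&1}$ is primitive (immediate, since its eigenvalue $2+\sqrt{3}$ is the fundamental unit of $\Z[\sqrt 3]$).
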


\begin{proof}
The function $\widetilde{\Delta}_{p,q}$ induces a group homomorphism $(\widetilde{\Delta}_{p,q})_* : H_1(\Gamma_{p,q} \backslash \SL_2\R;\Z) \to H_1(\C^\times;\Z)$. 
Since both homology groups are isomorphic to $\Z$, it suffices to show the surjectivity. 

If $(p,q) = (2,3)$, take a sufficiently large $y \in \R_{>0}$. Define a closed curve in $\SL_2\Z \backslash \SL_2\R$ by 
\[ C_y(t) = \pmat{1 & t \\ 0 & 1} \pmat{y^{1/2} & 0 \\ 0 & y^{-1/2}}, 
\quad (0 \leq t \leq 1) \]
and that in $\C^\times$ by 
\[ \widetilde{\Delta}_{2,3} (C_y(t)) = y^6 \Delta_{2,3} (t+iy), \quad (0 \leq t \leq 1). \]
Since $\Delta_{2,3}(z) = q_1 + O(q_1^2)$, we have $\mathrm{ind}(\widetilde{\Delta}_{2,3}(C_y(t)), 0) = 1$. Thus the map $(\widetilde{\Delta}_{2,3})_*$ is surjective.
	
If $(p,q) \neq (2,3)$, take the hyperbolic element $\gamma \in \Gamma_{p,q}$ defined in Lemma \ref{lem.psi=1}. By Proposition \ref{prop.winding=psi}, we have $\mathrm{ind}(\widetilde{\Delta}_{p,q}(C_\gamma),0) = \psi_{p,q}(\gamma) = 1$, which concludes that $(\widetilde{\Delta}_{p,q})_*$ is surjective.
\end{proof}

\subsubsection{Theorem in $L(r,p-1)$} 
By Proposition \ref{prop.tildeDelta},  
for any oriented knot $K$ in $L(r,p-1) - \overline{K}_{p,q} \cong \Gamma_{p,q} \backslash \SL_2\R$, we have 
\[ \mathrm{lk}(K, \overline{K}_{p,q}) = \frac{\,1\,}{r} \mathrm{ind}(\widetilde{\Delta}_{p,q}(K), 0). \]
Together with the results in Subsection \ref{subsec.variants}, we conclude the following.  
\begin{theorem} \label{thm.lens} 
{\rm (1)} 
Let $\gamma = \smat{a&b\\c&d} \in \Gamma_{p,q}$ be a primitive element with $a+d > 2$ and $c > 0$. Then the linking number of the modular knot $C_\gamma$ and the image $\overline{K}_{p,q}$ of the $(p,q)$-torus knot in the lens space $L(r,p-1)$ is given by
\[ \mathrm{lk}(C_\gamma, \overline{K}_{p,q}) = \frac{\,1\,}{r} \psi_{p,q}(\gamma). \]
	
{\rm (2)} Let $\gamma \in \Gamma_{p,q}$ be any hyperbolic element. 
Then the linking number is given by 
\[ \mathrm{lk}(C_\gamma, \overline{K}_{p,q}) = \frac{\,1\,}{r} \Psi_{p,q}(\gamma)= \frac{\,1\,}{r} \Psi_{p,q}^{\rm h}(\gamma). \]
\end{theorem}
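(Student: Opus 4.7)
My plan is to assemble the preparatory results already established earlier in this section and in Subsection~\ref{subsec.variants}; the theorem turns out to be a direct packaging of them. For part~(1), I would combine the scaling identity
\[ \mathrm{lk}(K, \overline{K}_{p,q}) = \frac{\,1\,}{r}\,\mathrm{ind}(\widetilde{\Delta}_{p,q}(K), 0), \]
valid for any oriented knot $K$ in $L(r,p-1)-\overline{K}_{p,q} \cong \Gamma_{p,q}\backslash \SL_2\R$ and recorded just before the theorem as a consequence of Proposition~\ref{prop.tildeDelta}, with the explicit winding-number evaluation $\mathrm{ind}(\widetilde{\Delta}_{p,q}(C_\gamma),0) = \psi_{p,q}(\gamma)$ of Proposition~\ref{prop.winding=psi}. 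Substituting the latter into the former immediately yields $\mathrm{lk}(C_\gamma, \overline{K}_{p,q}) = \frac{1}{r}\psi_{p,q}(\gamma)$.

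For part~(2), I would write a general hyperbolic element as $\gamma = \varepsilon \gamma_0^n$ with $\varepsilon \in \{\pm I\}$, $n \in \Z$, and $\gamma_0 = \smat{a&b\\c&d} \in \Gamma_{p,q}$ a primitive element satisfying $a+d > 2$ and $c > 0$. Definition~\ref{def.modularknot}(2) identifies $C_\gamma$ with the $1$-cycle $n C_{\gamma_0}$, so by linearity of the linking number in its first argument and part~(1),
\[ \mathrm{lk}(C_\gamma, \overline{K}_{p,q}) = \frac{\,n\,}{r}\,\psi_{p,q}(\gamma_0). \]
Since $\tr\gamma_0 > 0$ we have $\psi_{p,q}(\gamma_0) = \Psi_{p,q}(\gamma_0)$ by Definition~\ref{def.original}, and since $|\tr\gamma_0| \geq 2$ Proposition~\ref{prop.class-invariant} gives $n\Psi_{p,q}(\gamma_0) = \Psi_{p,q}(\gamma_0^n) = \Psi_{p,q}(\varepsilon\gamma_0^n) = \Psi_{p,q}(\gamma)$, the second equality coming from Lemma~\ref{lem.-gamma}. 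The identification $\Psi_{p,q}^{\rm h}(\gamma) = \Psi_{p,q}(\gamma)$ for $|\tr\gamma| \geq 2$ recorded at the end of Subsection~\ref{subsec.variants} then supplies the second equality in part~(2).

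Because almost all of the substance lies in the earlier propositions, there is no serious remaining obstacle. The one genuine subtlety, which is buried inside the displayed scaling identity above, is the normalization constant $1/r$ relating $\mathrm{ind}$ and $\mathrm{lk}$: it reflects the fact that a meridian $c(t) = \smat{1&t\\0&1}\smat{y^{1/2}&0\\0&y^{-1/2}}$ with $0\leq t\leq \lambda$ of the cusp has $\widetilde{\Delta}_{p,q}$-image winding $r$ times around the origin (the order of vanishing of $\Delta_{p,q}$ at $i\infty$), so that the generator of $H_1(\Gamma_{p,q}\backslash\SL_2\R;\Z)$ detected by Proposition~\ref{prop.tildeDelta} corresponds to $1/r$ times the meridian class used to normalize $\mathrm{lk}$. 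Everything else is routine algebraic bookkeeping with the class-invariance and homogeneity properties of $\Psi_{p,q}$.
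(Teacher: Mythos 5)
Your proposal is correct and follows exactly the route of the paper: part (1) is the composition of the identity $\mathrm{lk}(K,\overline{K}_{p,q})=\frac{1}{r}\mathrm{ind}(\widetilde{\Delta}_{p,q}(K),0)$ from Proposition \ref{prop.tildeDelta} with Proposition \ref{prop.winding=psi}, and part (2) is the bookkeeping with $\Psi_{p,q}(\gamma_0)=\psi_{p,q}(\gamma_0)$ for $\tr\gamma_0>0$, Lemma \ref{lem.-gamma}, and the homogeneity statement of Proposition \ref{prop.class-invariant}, which is precisely what the paper means by ``together with the results in Subsection \ref{subsec.variants}.'' Your aside on the normalization $1/r$ via the meridian's winding number $r$ is also consistent with the paper's description of the standard meridian.
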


\subsection{Modular knots in the 3-sphere} 
\label{s4-3} 

In this subsection, we investigate modular knots around the $(p,q)$-torus knot $K_{p,q}$ in $S^3$ to establish the latter half of our main theorem on the linking number. 

\subsubsection{Linking numbers in $\Z/r\Z$-cover} 

\begin{definition}  
For an oriented knot $K$ in $S^3 - K_{p,q}$, the \emph{linking number} $\mathrm{lk}(K, K_{p,q}) \in \Z$ is defined by the image of $[K]$ via the isomorphism $H_1(S^3 - K_{p,q}; \Z) \congto \Z$ sending a standard meridian $\mu$ of $K_{p,q}$ to 1. 
This definition naturally extends to knots with multiplicity. 
\end{definition} 

Recall that the restriction of the $\Z/r\Z$-cover $h:S^3\surj L(r,p-1)$ to the exterior of $K_{p,q}$ may be identified with the natural surjection $G_r \backslash \widetilde{\SL_2}\R \surj \wt{\Gamma}_{p,q} \backslash \wt{\SL_2}\R$. 
Let $K$ be an oriented knot in $L(r, p-1) - \overline{K}_{p,q}$ and $K'$ a connected component of $h^{-1}(K)$. 
The following two lemmas are consequences of a standard argument of the covering theory (e.g., the lifting property of continuous maps, \cite[Propositions 1.33, 1.34]{HatcherAT}). 
\begin{lemma} \label{lem.covering} 
The covering degree of the restriction $h : K'\to K$ coincides with the order of $[K]$ in $H_1(L(r,p-1); \Z)\cong \Z/r\Z$. 
The covering degree of $h: K_{p,q} \to \overline{K}_{p,q}$ is equal to $r$. 
\end{lemma}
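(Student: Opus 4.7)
The proof will split into two parts; both rest on standard covering theory.

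For the first assertion, the plan is to apply the lifting criterion. The restriction $h|_{K'}: K' \to K$ is a connected covering of $K \cong S^1$, so by \cite[Propositions 1.33, 1.34]{HatcherAT} its degree equals the index $[\pi_1(K) : (h|_{K'})_*\pi_1(K')]$. Since $\pi_1(K) \cong \Z$ is generated by the class $[K]$, this index is the smallest positive integer $n$ such that $n[K]$ lifts to a closed loop in $K'$; equivalently, the smallest $n$ such that $n[K]$ lies in $\pi_1(S^3 - K_{p,q}) = G_r$; equivalently, the smallest $n$ such that $n[K]$ maps to $0$ under the defining surjection $\pi_1(L(r,p-1)-\overline{K}_{p,q}) \surj \widetilde{\Gamma}_{p,q}/G_r \cong \Z/r\Z$.

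The next step will be to identify this with the order of $[K] \in H_1(L(r,p-1);\Z) \cong \Z/r\Z$. I plan to use that the standard $\Z/r\Z$-quotient $\overline{h}: S^3 \to L(r,p-1)$ is the universal cover of the lens space, and hence its unbranched restriction to $L(r,p-1) - \overline{K}_{p,q}$ is a $\Z/r\Z$-cover with total space $S^3 - K_{p,q}$. By uniqueness of connected covers classified by the subgroup $\pi_1(S^3 - K_{p,q}) = G_r$, this restricted cover coincides with $h$, and therefore the defining surjection factors through the inclusion-induced map
\[ \pi_1(L(r,p-1)-\overline{K}_{p,q}) \surj \pi_1(L(r,p-1)) = H_1(L(r,p-1);\Z) \cong \Z/r\Z. \]
Under this factorization, the class of $[K]$ in $\pi_1$ maps to its class in $H_1(L(r,p-1);\Z)$, so the desired covering degree equals the order of $[K]$ in $H_1(L(r,p-1);\Z)$.

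For the second assertion, the plan is to apply $\overline{h}$ directly. By construction $\overline{h}^{-1}(\overline{K}_{p,q}) = K_{p,q}$, which is connected as a torus knot; hence $\overline{h}|_{K_{p,q}} : K_{p,q} \to \overline{K}_{p,q}$ is a single connected covering of total degree $r$, so its degree is $r$. This is consistent with the first part: $\overline{K}_{p,q}$ is the core of a Heegaard solid torus of $L(r,p-1)$ and therefore represents a generator of $H_1(L(r,p-1);\Z) \cong \Z/r\Z$. The main obstacle will be pinning down the identification of the algebraic $\Z/r\Z$-cover defined by $G_r \subset \widetilde{\Gamma}_{p,q}$ with the topological restriction of the universal cover of $L(r,p-1)$; but this is essentially tautological once one invokes the classification of covering spaces together with the identifications $G_r \cong \pi_1(S^3 - K_{p,q})$ and $\widetilde{\Gamma}_{p,q} \cong \pi_1(L(r,p-1) - \overline{K}_{p,q})$ from Tsanov's theorem.
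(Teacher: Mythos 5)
Your proof is correct and follows essentially the same route as the paper: the paper derives both assertions from the decomposition group of $K'$ inside $\mathrm{Deck}(h)\cong H_1(L(r,p-1);\Z)$ via Hilbert ramification theory for the $\Z/r\Z$-cover, which is exactly the covering-space/lifting-criterion argument you spell out (the paper even cites the same Hatcher propositions just before the lemma). Your identification of the cover defined by $G_r$ with the restriction of the universal cover of the lens space, via uniqueness of the kernel of surjections $\widetilde{\Gamma}_{p,q}\surj\Z/r\Z$, is the same identification the paper makes in the preceding lemma.
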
 

\begin{proof} Note that the decomposition group of $K'$ is a subgroup of the Deck transformation group ${\rm Deck}(h)\cong H_1(L(r,p-1);\Z)\cong \Z/r\Z$ generated by $[K]$. 
The assertion follows from the Hilbert ramification theory for $\Z/r\Z$-cover \cite[Section 2]{ueki1}. 
\end{proof} 

\begin{lemma}\label{lem.link-lens-link-S3} 
If $[K]$ in $H_1(L(r,p-1);\Z) \cong \Z/r\Z$ is of order $m$, then we have
\[
\mathrm{lk}(K', K_{p,q}) = m\, \mathrm{lk}(K, \overline{K}_{p,q}).
\]
\end{lemma}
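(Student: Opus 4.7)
The plan is to transport the computation of $\mathrm{lk}(K', K_{p,q})$ from $H_1(S^3-K_{p,q};\Z)$ to $H_1(L(r,p-1)-\ol{K}_{p,q};\Z)$ via the induced homomorphism $h_*$ on first homology, and then apply the covering degree from Lemma~\ref{lem.covering} to relate the classes $[K']$ and $[K]$.

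The key input is already recorded in the paragraph preceding the lemma: under the natural identifications $H_1(S^3-K_{p,q};\Z)\cong \Z$ and $H_1(L(r,p-1)-\ol{K}_{p,q};\Z)\cong \frac{\,1\,}{r}\Z$ (sending the meridians $\tilde{\mu}$ of $K_{p,q}$ and $\mu$ of $\ol{K}_{p,q}$ to $1$, respectively), $h_*$ is realized as the inclusion of the index-$r$ subgroup $\Z\hookrightarrow \frac{\,1\,}{r}\Z$, which is numerically the identity map $n\mapsto n$. Geometrically this reflects the fact that $h:S^3\to L(r,p-1)$ is an unramified $\Z/r\Z$-cover of closed manifolds, restricting to an unramified cover of complements; so a small meridian $\mu$ of $\ol{K}_{p,q}$ has preimage $h^{-1}(\mu)$ consisting of $r$ disjoint meridians of $K_{p,q}$, each mapped by degree $1$, yielding $h_*[\tilde{\mu}]=[\mu]$. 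In particular, for any oriented knot $L'$ in $S^3-K_{p,q}$ the linking number $\mathrm{lk}(L', K_{p,q})\in \Z$ coincides, as a rational number, with the image of $h_*[L']$ in $\frac{\,1\,}{r}\Z$.

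It remains to compute $h_*[K']$. Since $h|_{K'}:K'\to K$ is an $m$-fold covering of oriented circles by Lemma~\ref{lem.covering}, functoriality of $H_1$ gives $h_*[K']=m[K]$ in $H_1(L(r,p-1)-\ol{K}_{p,q};\Z)$. Combining with the previous paragraph,
\[
\mathrm{lk}(K',K_{p,q})=h_*[K']=m[K]=m\cdot \mathrm{lk}(K,\ol{K}_{p,q}),
\]
as required. No step is particularly delicate; the whole proof is a bookkeeping verification that the two linking-number normalizations are compatible with the identification of $G_r^{\rm ab}$ as an index-$r$ subgroup of $\wt{\Gamma}_{p,q}^{\rm ab}$. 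The only point that must be kept straight is the unramifiedness of $h$ on the complements, which is what makes $h_*[\tilde\mu]=[\mu]$ (rather than $r[\mu]$) and so identifies the two linking invariants numerically up to the factor $m$ read off from the class of $[K]$ in $H_1(L(r,p-1);\Z)$.
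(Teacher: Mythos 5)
Your proof is correct, and it takes a genuinely different route from the paper's. The paper works on the surface side: it takes a connected surface $\Sigma$ in $L(r,p-1)$ with $\partial\Sigma=mK$, lifts it to a component $\Sigma'$ of $h^{-1}(\Sigma)$ with $\partial\Sigma'=K'$, and compares intersection numbers via $\iota(\Sigma',K_{p,q})=\iota(\Sigma,\ol{K}_{p,q})$; this implicitly appeals to the equivalence of the homological normalization of the linking number with the intersection-theoretic one of Seifert--Threlfall, and requires a little care about how $h^{-1}$ interacts with $\partial\Sigma=mK$. You instead stay entirely on the curve side and argue functorially with the homological definitions the paper actually adopts: you identify $h_*$ with the inclusion $\Z\hookrightarrow\frac{\,1\,}{r}\Z$ by checking $h_*[\tilde\mu]=[\mu]$ (the unramifiedness point you correctly isolate --- $h^{-1}(\mu)$ is $r$ disjoint meridians of $K_{p,q}$, each mapping with degree one), and then use the degree-$m$ covering $h|_{K'}:K'\to K$ from Lemma \ref{lem.covering} to get $h_*[K']=m[K]$. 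Your version avoids producing and lifting a Seifert surface and makes the role of the index-$r$ inclusion $G_r^{\mathrm{ab}}\subset\wt{\Gamma}_{p,q}^{\mathrm{ab}}$ explicit; the paper's version is more geometric. Both are complete proofs of the same identity.
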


\begin{proof} We have a connected surface $\Sigma$ in $L(r,p-1)$ with $\partial \Sigma=mK$ and 
a connected component $\Sigma'$ of the preimage $h^{-1}(\Sigma)$ with $\partial  \Sigma'=K'$. 
Let $\iota$ denote the intersection number. 
Then by Lemma \ref{lem.covering}, we have ${\rm lk}(K',K_{p,q})=\iota (\Sigma',K_{p,q})=\iota (\Sigma,\ol{K}_{p,q})= {\rm lk}(mK,\ol{K}_{p,q})=m\,{\rm lk}(K,\ol{K}_{p,q})$. 
\end{proof}

\subsubsection{Modular knots in $\Z/r\Z$-cover}

We define a modular knot in $S^3$ as a connected component of the inverse image of that in $L(r,p-1)$. 
\begin{definition} \label{def.modularknot'} 
(1) Let $\gamma=\smat{a&b\\c&d}\in \Gamma_{p,q}$ be a primitive element with $a+d>2$ and $c>0$. 
Consider the modular knot $C_\gamma$ in $L(r, p-1) - \overline{K}_{p,q}$ associated to $\gamma$ and 
let $m_\gamma$ denote the order of $[C_\gamma]$ in $H_1(L(r,p-1);\Z)\cong \Z/r\Z$,
so that the inverse image $h^{-1}(C_\gamma)$ consists of exactly $r/m_\gamma$-connected components. 
We call each connected component $C_\gamma'$ of $h^{-1}(C_\gamma)$ \emph{a modular knot associated to} $\gamma \in \Gamma_{p,q}$ in $S^3-K_{p,q}$. 

(2) Let $\gamma \in \Gamma_{p,q}$ be any hyperbolic element, so that we have $\gamma=\pm\gamma_0^\nu$ for some primitive $\gamma_0=\smat{a&b\\c&d} \in \Gamma_{p,q}$ with $a+d>2$ and $c>0$ and $\nu\in \Z$. 
Let $C'_{\gamma_0}$ be a modular knot in $S^3-K_{p,q}$ associated to $\gamma_0$. 
We call the knot $C_\gamma'=\nu C_{\gamma_0}'$ with multiplicity \emph{a modular knot associated to} $\gamma\in \Gamma_{p,q}$ in $S^3-K_{p,q}$. 
\end{definition} 

The following lemma plays a key role to explicitly find the integer $m_\gamma$. 

\begin{lemma} \label{lem.ngamma} \label{n-explicit-psi} 
For each $\gamma \in \Gamma_{p,q}$, we have $(\gamma,n)\in G_r$ if and only if 
\[ 2pq n \equiv \psi_{p,q}(\gamma) \ {\rm mod} \ r \]
holds. 
Such $n$'s define an element in $\Z/r\Z$. 

If $n_\gamma \in \Z$ with $(\gamma,n_\gamma)\in G_r$, then 
${\rm gcd}(r, n_\gamma)={\rm gcd}(r,\psi_{p,q}(\gamma))$ holds. 
\end{lemma}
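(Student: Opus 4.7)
The plan is to exploit the additive character $\chi_{p,q}:\widetilde{\Gamma}_{p,q}\to \Z$ from Theorem~\ref{thm.chi} and identify $G_r$ with the kernel of its reduction modulo $r$. Recall that $\chi_{p,q}(\gamma,n)=\psi_{p,q}(\gamma)-2npq$, so the congruence $2pqn\equiv \psi_{p,q}(\gamma)\pmod{r}$ is simply $\chi_{p,q}(\gamma,n)\equiv 0\pmod r$. Hence the first assertion reduces to the group-theoretic identity
\[ G_r = \ker\bigl(\chi_{p,q}\bmod r\bigr). \]

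To establish this identity, I would first compute $\chi_{p,q}(G_r)$. Since $G_r=\langle \widetilde{S}_p^{\,r},\widetilde{U}_q^{\,r}\rangle$ and $\chi_{p,q}(\widetilde{S}_p^{\,r})=-rq$, $\chi_{p,q}(\widetilde{U}_q^{\,r})=-rp$, we get $\chi_{p,q}(G_r)=\gcd(rp,rq)\Z=r\Z$ because $\gcd(p,q)=1$. In particular $G_r\subseteq \ker(\chi_{p,q}\bmod r)$. For the reverse inclusion, compare indices: Lemma~\ref{abelianization} gives $[\widetilde{\Gamma}_{p,q}:G_r]=r$, while $\chi_{p,q}$ itself is surjective onto $\Z$ (again by $\gcd(p,q)=1$), so $[\widetilde{\Gamma}_{p,q}:\ker(\chi_{p,q}\bmod r)]=r$ as well. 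Equality of indices forces $G_r=\ker(\chi_{p,q}\bmod r)$, which is the main point; this is the step that I expect to be the most delicate, since it hinges on the precise identification of $G_r^{\rm ab}$ inside $\widetilde{\Gamma}_{p,q}^{\rm ab}$ from Subsection~\ref{s4-1}.

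For the second assertion, I would observe that if $(\gamma,n_1),(\gamma,n_2)\in G_r$, then $(\gamma,n_1)^{-1}(\gamma,n_2)=(I,n_2-n_1)\in G_r$, so $n_2-n_1$ lies in $\{n\in\Z\mid (I,n)\in G_r\}$. Any such $(I,n)$ is central in $\widetilde{\SL_2}\R$ and hence in $Z(G_r)=\langle(-I,1)^r\rangle$, from which a short calculation $(-I,1)^{2r}=(I,r)$ identifies this set with $r\Z$. Therefore the solution set $\{n:(\gamma,n)\in G_r\}$ is a single coset modulo $r$, giving a well-defined class in $\Z/r\Z$.

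The final statement on greatest common divisors follows from the observation that $r$ is always odd (either $p=2$, in which case $r=q-2$ with $q$ odd, or both $p,q$ are odd, in which case $pq-p-q$ is odd), combined with $\gcd(r,p)=\gcd(-q,p)=1$ and $\gcd(r,q)=\gcd(-p,q)=1$. Hence $\gcd(2pq,r)=1$, so $2pq$ is a unit modulo $r$, and the congruence $2pqn_\gamma\equiv\psi_{p,q}(\gamma)\pmod r$ forces $\gcd(r,n_\gamma)=\gcd(r,\psi_{p,q}(\gamma))$, completing the proof.
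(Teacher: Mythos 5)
Your argument is correct. The paper reaches the key congruence by a different mechanism: it fixes a set-theoretic section $\gamma \mapsto n_\gamma$ with $(\gamma, n_\gamma) \in G_r$, uses the multiplication law of $\widetilde{\SL_2}\R$ together with Theorem~\ref{thm.W} to show that $\gamma \mapsto \psi_{p,q}(\gamma) - 2pq\, n_\gamma \bmod r$ is a homomorphism $\Gamma_{p,q} \to \Z/r\Z$, and then kills it using $H^1(\Gamma_{p,q}; \Z/r\Z) = 0$ (which follows from $\Gamma_{p,q}$ being generated by torsion elements of orders $2p, 2q$ prime to $r$). You instead identify $G_r$ outright as $\ker(\chi_{p,q} \bmod r)$ by evaluating $\chi_{p,q}$ on the generators $\widetilde{S}_p^{\,r}, \widetilde{U}_q^{\,r}$ and comparing two index-$r$ subgroups, one contained in the other. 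The two mechanisms are cousins---both ultimately exploit $\gcd(2pq, r) = 1$ and the rigidity of ``index-$r$ data'' on $\Gamma_{p,q}$---but yours buys a cleaner structural statement: $G_r$ is literally the kernel of the reduced character, which yields both directions of the equivalence simultaneously, whereas the paper proves only the forward implication explicitly and gets the converse from the fact that the admissible $n$ form a full residue class. The cost is that you invoke Tsanov's computation $\widetilde{\Gamma}_{p,q}/G_r \cong \Z/r\Z$ from Lemma~\ref{abelianization}, which the paper's proof of this particular lemma does not need (it relies only on Lemma~\ref{lem.Gr-prop}). Your treatment of well-definedness modulo $r$ via $P^{-1}(I) \cap G_r = \langle (I, r) \rangle$ and of the final gcd statement coincides with the paper's; you even supply the small verification that $r$ is odd and prime to $pq$, which the paper takes for granted.
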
 

\begin{proof} 
By Lemma \ref{lem.Gr-prop} (2), there exitst some $n\in \Z$ satisfying $(\gamma, n) \in G_r$. 
In addition, by Lemma \ref{lem.Gr-prop} (1), we have $Z'(G_r)=P^{-1}(I) \cap G_r = \langle (I, r) \rangle = \langle (-I, 1)^{2r} \rangle $, which is the subgroup of $Z(G_r)\cong \Z$ with index 2. 
Now suppose $(\gamma,n), (\gamma,n') \in G_r$. Then we have $(\gamma, n) (\gamma, n')^{-1} \in G_r$, which implies $n-n'\equiv 0 \ {\rm mod}\ r$. 
Thus the set of $n\in \Z$ with $(\gamma, n) \in G_r$ defines a class $n_\gamma \in \Z/r\Z$. 
	
Now take $n_\gamma \in \Z$ with $(\gamma, n_\gamma)\in G_r$ for each $\gamma\in \Gamma_{p,q}$, so that we have a map $n_\bullet:\Gamma_{p,q}\to \Z$. 
Note that $\gcd(2pq, r) = 1$. Since $\Gamma_{p,q}$ is generated by $S_p$ and $U_q$ of orders $2p$ and $2q$, a group homomorphism $\Gamma_{p,q} \to \Z/r\Z$ is trivial, that is, we have $H^1(\Gamma_{p,q}; \Z/r\Z) = 0$. 
Since 
$(\gamma_1, n_{\gamma_1}) \cdot (\gamma_2, n_{\gamma_2}) = (\gamma_1 \gamma_2, n_{\gamma_1} + n_{\gamma_2} + W(\gamma_1, \gamma_2))$ in $G_r$, we have
 \[ n_{\gamma_1\gamma_2}\equiv n_{\gamma_1}+n_{\gamma_2}+W(\gamma_1,\gamma_2) \ {\rm mod}\ r. \] 
On the other hand, by Theorem \ref{thm.W}, we have 
\[ \psi_{p,q}(\gamma_1\gamma_2) = \psi_{p,q}(\gamma_1) + \psi_{p,q}(\gamma_2) + 2pq W(\gamma_1, \gamma_2). \]
Hence we have a group homomorphism $\psi_{p,q}(\gamma)-2pqn_\gamma \ {\rm mod}\ r: \Gamma_{p,q}\to \Z/r\Z$, which must be zero by $H^1(\Gamma_{p,q}; \Z/r\Z) = 0$. 
Thus we obtain $2pq n_\gamma \equiv \psi_{p,q}(\gamma) \ {\rm mod} \ r$. 
	
Again by ${\rm gcd}(2pq,r)=1$, we obtain ${\rm gcd}(r, n_\gamma)={\rm gcd}(r,\psi_{p,q}(\gamma))$. 
\end{proof}

Now let $\gamma = \smat{a&b\\c&d} \in \Gamma_{p,q}$ be a primitive element with $a+d > 2$ and $c > 0$ and 
take $n_\gamma\in \Z$ with $(\gamma,n_\gamma)\in G_r$. 
\begin{lemma} \label{lem.Cgammaell} 
For each $l \in \Z/r \Z$, we may define a simple closed curve in $G_r \backslash \widetilde{\SL_2}\R$ by 
\[ C_{\gamma,l}(t) = \big( M_\gamma \spmx{e^t & 0 \\ 0 & e^{-t}}, l \big), 
\quad (0 \leq t \leq \frac{r}{\gcd(r, \psi_{p,q}(\gamma))} \log \xi_\gamma). \]
\end{lemma}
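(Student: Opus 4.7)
I will verify two things separately: (a) \emph{closedness}, namely that $C_{\gamma,l}(0)$ and $C_{\gamma,l}(T)$ lie in the same $G_r$-orbit, where $T = \frac{r}{\gcd(r, \psi_{p,q}(\gamma))} \log \xi_\gamma$; and (b) \emph{simplicity}, namely that $C_{\gamma,l}$ is injective on $[0, T)$ modulo $G_r$. Put $d = \gcd(r, \psi_{p,q}(\gamma))$ and $N = r/d$, so that $T = N \log \xi_\gamma$.

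For (a), I will rewrite $C_{\gamma,l}(T) = (\gamma^N M_\gamma, l)$ via $\gamma^N M_\gamma = M_\gamma \smat{\xi_\gamma^N & 0 \\ 0 & \xi_\gamma^{-N}}$ and propose $(\gamma^N, 0) \in \wt{\Gamma}_{p,q}$ as the element of $G_r$ carrying $(M_\gamma, l)$ to this endpoint. To see $(\gamma^N, 0) \in G_r$, I appeal to Lemma \ref{lem.ngamma}: it suffices that $\psi_{p,q}(\gamma^N) \equiv 0 \pmod{r}$. Under the standing hypotheses $\tr \gamma > 2$ and $c > 0$, Asai's sign satisfies $\sgn(\gamma^k) = +1$ for every $k \geq 1$, so Asai's table yields $W(\gamma^a, \gamma^b) = 0$, whence $\psi_{p,q}$ is additive on positive powers of $\gamma$ and $\psi_{p,q}(\gamma^N) = N \psi_{p,q}(\gamma) = r \cdot \psi_{p,q}(\gamma)/d \equiv 0 \pmod{r}$. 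Next, $(\gamma^N, 0) \cdot (M_\gamma, l) = (\gamma^N M_\gamma,\, l + W(\gamma^N, M_\gamma))$, and I claim $W(\gamma^N, M_\gamma) = 0$: indeed $M_\gamma$ has positive $c$-entry $1/\sqrt{w_\gamma - w_\gamma'}$ by construction, the product $\gamma^N M_\gamma = M_\gamma \smat{\xi_\gamma^N & 0 \\ 0 & \xi_\gamma^{-N}}$ also has positive $c$-entry, so $\sgn(M_\gamma) = \sgn(\gamma^N) = \sgn(\gamma^N M_\gamma) = +1$ and Asai's table gives $W = 0$.

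For (b), I suppose $0 \leq t_1 < t_2 < T$ with $C_{\gamma,l}(t_1)$ and $C_{\gamma,l}(t_2)$ equivalent modulo $G_r$, and derive a contradiction. The first-coordinate equation forces any realizing $(g, n) \in G_r$ to satisfy $g = M_\gamma \smat{e^{t_2-t_1} & 0 \\ 0 & e^{-(t_2-t_1)}} M_\gamma^{-1}$, so $g$ centralizes $\gamma$ in $\Gamma_{p,q}$. The centralizer of the primitive hyperbolic element $\gamma$ in the Fuchsian group $\Gamma_{p,q}$ is $\langle \pm \gamma \rangle$, and the positivity of the eigenvalue $e^{t_2-t_1}$ of $g$ rules out the minus sign, yielding $g = \gamma^k$ with $e^{t_2-t_1} = \xi_\gamma^k$ and hence $0 < k < N$. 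The second-coordinate equation then reads $n + W(\gamma^k, M_\gamma \smat{e^{t_1} & 0 \\ 0 & e^{-t_1}}) = 0$; the same sign analysis as above kills $W$, so $n = 0$. But then Lemma \ref{lem.ngamma} applied to $(\gamma^k, 0) \in G_r$ requires $r \mid k \psi_{p,q}(\gamma)$, equivalently $N \mid k$, contradicting $0 < k < N$.

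The main obstacle will be the clean vanishing of the $W$-cocycle corrections at each stage: it is precisely $W = 0$ that lets me identify the endpoint-translating element as $(\gamma^N, 0)$ on the nose (so that the $\Z$-coordinate remains $l$ throughout), and that rules out any shorter return time than $N \log \xi_\gamma$. I expect the bulk of the substantive bookkeeping to be this sign analysis, which works cleanly thanks to the explicit positive-$c$ form of $M_\gamma$ together with the standing positivity hypotheses on $\gamma$.
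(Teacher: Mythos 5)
Your proposal is correct and follows essentially the same route as the paper: both arguments reduce the period to the question of when a lift of $\gamma^k$ lies in $G_r$, decided via Lemma \ref{lem.ngamma} together with the vanishing of the Asai cocycle $W$ on the relevant products (all signs being $+1$), yielding the minimal return time $k=r/\gcd(r,\psi_{p,q}(\gamma))$. The only cosmetic difference is that the paper tracks the $\Z$-coordinate shift $(I,-n_\gamma)$ per period $\log\xi_\gamma$ and invokes $Z'(G_r)=\langle (I,r)\rangle$, whereas you test $(\gamma^k,0)\in G_r$ directly via $r\mid k\,\psi_{p,q}(\gamma)$; your injectivity argument via the centralizer $\pm\langle\gamma\rangle$ is a welcome elaboration of a step the paper leaves implicit.
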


\begin{proof}  
Note that we have 
\[\sgn(\gamma)>0, \ \ \sgn(M_\gamma \spmx{e^t&0\\0&e^{-t}})>0, \ \ \sgn(M_\gamma\spmx{e^{t+\log\xi_\gamma}&0\\0&e^{-t-\log \xi_\gamma}})>0.\]  
Then a direct calculation yields 
\begin{align*} 
C_{\gamma,l}(t+\log\xi_\gamma) 
&= \big( M_\gamma \spmx{\xi_\gamma&0\\0&\xi_\gamma^{-1}} \spmx{e^t & 0 \\ 0 & e^{-t}}, l \big) \\
& = (\gamma,0) \big( M_\gamma \spmx{e^t & 0 \\ 0 & e^{-t}}, l \big)\\
& = (\gamma,n_\gamma)(I,-n_\gamma)\big( M_\gamma \spmx{e^t & 0 \\ 0 & e^{-t}}, l \big)\\
& = (I,-n_\gamma)C_{\gamma,l}(t). 
\end{align*} 
Hence for any $k\in \Z$, we have 
\[C_{\gamma,l}(t+k\log \xi_\gamma)=(I,-kn_\gamma)C_{\gamma,l}(t).\] 
Since $Z'(G_r)=P^{-1}(I) \cap G_r = \langle (I, r) \rangle$, 
we have $(I,-kn_\gamma)\in G_r$ if and only if $-kn_\gamma =0$ in $\Z/r\Z$ holds. 
The least positive $k$ with $-kn_\gamma =0$ is given by 
$k=r/{\rm gcd}(r,n_\gamma)=r/{\rm gcd}(r,\psi_{p,q}(\gamma))$. 
Hence we obtain the assertion. 
\end{proof} 

The image $C_{\gamma,l}$ in $G_r\backslash \wt{\SL_2}\R\cong S^3-K_{p,q}$ with the induced orientation is a modular knot associated to $\gamma$. 
\begin{proposition} 
For $l,l' \in \Z/r\Z$, we have $C_{\gamma,l}=C_{\gamma,l'}$ if and only if $l\equiv l'\ {\rm mod}\ {\rm gcd}(r,\psi_{p,q}(\gamma))$ holds. 
The set of modular knots in $S^3-K_{p,q}$ associated to $\gamma$ coincides with 
$\{C_{\gamma,l}\mid l\in \Z/r\Z\}=\{C_{\gamma,l}\mid l=0,1,\cdots, {\rm gcd}(r,\psi_{p,q}(\gamma))-1\}$. 
\end{proposition}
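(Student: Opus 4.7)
The plan is to read off the identifications $C_{\gamma,l}=C_{\gamma,l'}$ by tracking the fiber of the $\Z/r\Z$-cover $h$ above the base point $[M_\gamma]$, and then combine this with the count of connected components of $h^{-1}(C_\gamma)$ that is already available from Lemmas \ref{lem.covering}--\ref{lem.link-lens-link-S3} together with Lemma \ref{lem.ngamma}.

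First I would describe the fiber. Since $P^{-1}(I)\cap G_r = Z'(G_r) = \langle (I,r)\rangle$ by Lemma \ref{lem.Gr-prop}, two points $(M_\gamma,l)$ and $(M_\gamma,l')$ in $\widetilde{\SL_2}\R$ project to the same element of $G_r\backslash\widetilde{\SL_2}\R$ exactly when $l\equiv l' \pmod r$. Hence the fiber of $h$ over $[M_\gamma]\in \widetilde{\Gamma}_{p,q}\backslash\widetilde{\SL_2}\R$ consists of the $r$ distinct points $[(M_\gamma,l)]$ with $l\in\Z/r\Z$, and any connected component of $h^{-1}(C_\gamma)$ meets this fiber at least once.

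Next I would trace $C_{\gamma,l}$ across the fiber. Pick $n_\gamma\in\Z$ with $(\gamma,n_\gamma)\in G_r$ as in Lemma \ref{n-explicit-psi}. The signs $\sgn(\gamma)$, $\sgn(M_\gamma)$, and $\sgn(\gamma M_\gamma)=\sgn(M_\gamma D_1)$ are all $+1$, so Asai's table forces $W(\gamma,M_\gamma)=0$. Consequently
\[
(\gamma,n_\gamma)\cdot(M_\gamma,\,l-n_\gamma)=(\gamma M_\gamma,\,l)=(M_\gamma D_1,\,l)
\]
in $\widetilde{\SL_2}\R$, giving $[(M_\gamma D_1,l)]=[(M_\gamma,\,l-n_\gamma)]$ in $G_r\backslash\widetilde{\SL_2}\R$. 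Since $\sgn(\gamma^j)=+1$ for every $j\geq 0$, the same argument inductively yields $(\gamma,n_\gamma)^j=(\gamma^j,jn_\gamma)$ and
\[
C_{\gamma,l}(j\log\xi_\gamma)=[(M_\gamma D_j,\,l)]=[(M_\gamma,\,l-jn_\gamma)]
\]
for all $j\geq 0$. Letting $j$ run over $0,1,\ldots,\tfrac{r}{\gcd(r,\psi_{p,q}(\gamma))}-1$, the fiber points visited by $C_{\gamma,l}$ form the coset $l+n_\gamma\Z \subset \Z/r\Z$, which by Lemma \ref{lem.ngamma} equals $l+\gcd(r,\psi_{p,q}(\gamma))\Z$. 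As a connected lift of $C_\gamma$ meets exactly one such coset, this yields the ``if and only if'' statement.

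Finally I would close the picture. Each $C_{\gamma,l}$ is a closed connected subset of $h^{-1}(C_\gamma)$ covering $C_\gamma$ with degree $m_\gamma=r/\gcd(r,\psi_{p,q}(\gamma))$, so it is a full connected component; and every component of $h^{-1}(C_\gamma)$ meets the fiber above $[M_\gamma]$, hence coincides with some $C_{\gamma,l}$. The total number of distinct components is then $\gcd(r,\psi_{p,q}(\gamma))$, agreeing both with $r/m_\gamma$ from the covering-theoretic lemmas and with the number of cosets mod $\gcd(r,\psi_{p,q}(\gamma))$ obtained above; this gives the second displayed equality in the statement. The only potential obstacle is bookkeeping of Asai's cocycle $W$ along the trajectory, but this is handled uniformly by the positivity of $\sgn(\gamma^j)$ and of $\sgn(M_\gamma D_j)$ for $j\geq 0$, which forces all relevant $W$-corrections to vanish.
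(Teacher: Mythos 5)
Your proof is correct and follows essentially the same route as the paper: both arguments reduce the identification $C_{\gamma,l}=C_{\gamma,l'}$ to the computation $C_{\gamma,l}(j\log\xi_\gamma)=[(M_\gamma,\,l-jn_\gamma)]$ coming from $(\gamma,n_\gamma)\in G_r$ and $Z'(G_r)=\langle (I,r)\rangle$, together with $\gcd(r,n_\gamma)=\gcd(r,\psi_{p,q}(\gamma))$ from Lemma \ref{lem.ngamma}, and then count components by covering degree. The only cosmetic difference is that the paper exhibits an explicit reparametrization $C_{\gamma,l'}(t)=C_{\gamma,l}(t-bk\log\xi_\gamma)$ for the ``if'' direction, whereas you invoke the fact that two connected components of $h^{-1}(C_\gamma)$ sharing a fiber point must coincide; both are valid.
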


\begin{proof} 
Suppose $C_{\gamma,l}=C_{\gamma,l'}$. Then there exists some $t\in \R_{>0}$ satisfying 
$C_{\gamma,l}(0)=C_{\gamma,l'}(t)$ in $G_r\backslash \wt{\SL_2}\R$, that is, 
there exists some $(\sigma,s)\in G_r$ satisfying 
\[(\sigma,s)(M_\gamma,l)=\big(M_\gamma \spmx{e^t & 0 \\ 0 & e^{-t}}, l' \big).\]
Since $\sigma M_\gamma=M_\gamma\smat{e^t & 0 \\ 0 & e^{-t}}$, 
there exists some $k\in \Z_{>0}$ satisfying 
$\sigma=\gamma^k$, $t=k\log \xi_\gamma$, and $s\equiv kn_\gamma$ mod $r$. Since 
\[(\gamma^kM_\gamma, kn_\gamma+l)=\big(M_\gamma\spmx{\xi_\gamma\!^k&0\\0&\xi_\gamma\!^{-k}},l'\big),\]
we have $kn_\gamma+l\equiv l'$ mod $r$. 
Hence we have $l\equiv l'$ mod ${\rm gcd}(r,n_\gamma)={\rm gcd}(r,\psi_{p,q}(\gamma))$. 

Suppose instead that $l\equiv l'$ mod ${\rm gcd}(r,n_\gamma)$. Then we have 
$l'=l+k\,{\rm gcd}(r,n_\gamma)$ and ${\rm gcd}(r,n_\gamma)=ar+b n_\gamma$ for some $k,a,b\in \Z$. 
By 
\begin{align*}
C_{\gamma,l'}(t)
&=\big(M_\gamma \spmx{e^t & 0 \\ 0 & e^{-t}}, l+akr+bkn_\gamma \big)\\
&=(I,bkn_\gamma)\big(M_\gamma \spmx{e^t & 0 \\ 0 & e^{-t}}, l \big)\\
&=C_{\gamma,l}(t-bk\log \xi_\gamma), 
\end{align*}
we obtain $C_{\gamma,l}=C_{\gamma,l'}$. 

Comparing the covering degree, we obtain the second assertion. 
\end{proof}

\begin{proposition} \label{prop.m_gamma} 
The element $[C_\gamma] \in H_1(L(r,p-1); \Z) \cong \Z/r\Z$ is of order 
\[m_\gamma = \dfrac{r}{\gcd(r, n_\gamma)} = \dfrac{r}{\gcd(r, \psi_{p,q}(\gamma))}.\]  
\end{proposition}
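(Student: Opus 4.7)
The plan is to combine the explicit lift from Lemma \ref{lem.Cgammaell} with the covering-degree interpretation of the order of a homology class from Lemma \ref{lem.covering}. The key observation is that the parameter length of the lifted curve $C_{\gamma,l}$, compared to that of the base $C_\gamma$, directly exhibits the covering degree of $h|_{C_{\gamma,l}} \colon C_{\gamma,l} \to C_\gamma$.

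First I would fix $l \in \Z/r\Z$ and consider the lifted curve $C_{\gamma,l}$ in $G_r \backslash \widetilde{\SL_2}\R \cong S^3-K_{p,q}$, which by Lemma \ref{lem.Cgammaell} is parametrized by $t \in [0, \frac{r}{\gcd(r,\psi_{p,q}(\gamma))} \log \xi_\gamma]$. On the other hand the base knot $C_\gamma$ in $\Gamma_{p,q}\backslash \SL_2\R \cong L(r,p-1)-\overline{K}_{p,q}$ is parametrized by $t \in [0,\log\xi_\gamma]$, and the restriction of $h$ to $C_{\gamma,l}$ agrees (via the chosen parametrizations) with the natural projection $G_r \backslash \widetilde{\SL_2}\R \surj \widetilde{\Gamma}_{p,q}\backslash \widetilde{\SL_2}\R$. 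Consequently the covering degree of $h \colon C_{\gamma,l} \to C_\gamma$ is exactly $\frac{r}{\gcd(r,\psi_{p,q}(\gamma))}$.

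Next I would invoke Lemma \ref{lem.covering}, which identifies the covering degree of $h\colon K' \to K$ with the order of $[K]$ in $H_1(L(r,p-1);\Z)\cong \Z/r\Z$. Applying this to $K = C_\gamma$ and $K' = C_{\gamma,l}$, we conclude
\[ m_\gamma = \frac{r}{\gcd(r, \psi_{p,q}(\gamma))}. \]
Finally, by Lemma \ref{lem.ngamma}, we have $\gcd(r,n_\gamma) = \gcd(r,\psi_{p,q}(\gamma))$, which gives the alternative expression $m_\gamma = r/\gcd(r,n_\gamma)$ stated in the proposition.

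I expect no serious obstacle: all the real work has been packaged into Lemmas \ref{lem.Gr-prop}, \ref{lem.covering}, \ref{lem.ngamma}, and \ref{lem.Cgammaell}. The only point requiring a moment of care is the compatibility of parametrizations, namely that the natural projection $P \colon \widetilde{\SL_2}\R \to \SL_2\R$ sends $(M_\gamma\smat{e^t&0\\0&e^{-t}},l)$ to $M_\gamma\smat{e^t&0\\0&e^{-t}}$ and hence covers the base curve monotonically in $t$; once this is noted, equating the ratio of parameter lengths with the covering degree is immediate.
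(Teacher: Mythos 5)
Your proposal is correct and follows essentially the same route as the paper: the paper likewise reads off the covering degree of $h\colon C_{\gamma,l}\to C_\gamma$ by comparing the period $\frac{r}{\gcd(r,\psi_{p,q}(\gamma))}\log\xi_\gamma$ from Lemma \ref{lem.Cgammaell} with the period $\log\xi_\gamma$ of $C_\gamma$, and then applies Lemma \ref{lem.covering}. The final identification $\gcd(r,n_\gamma)=\gcd(r,\psi_{p,q}(\gamma))$ via Lemma \ref{lem.ngamma} is also exactly how the paper obtains the two equivalent expressions for $m_\gamma$.
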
 

\begin{proof} Since the period of $C_\gamma(t)$ is $\log \xi_\gamma$, Lemma \ref{lem.Cgammaell} yields that 
the covering degree of the restriction $h:C_{\gamma,l}\to C_\gamma$ is $r/\gcd(r, \psi_{p,q}(\gamma))$. 
By Lemma \ref{lem.covering}, we obtain the assertion. 
\end{proof} 

\subsubsection{Theorem in $S^3$}

By Lemma \ref{lem.link-lens-link-S3}, Theorem \ref{thm.lens}, and by Proposition \ref{prop.m_gamma}, we obtain \[ {\rm lk}(C_\gamma',K_{p,q})=m_\gamma\, {\rm lk}(C_\gamma,\ol{K}_{p,q})
=\dfrac{m_\gamma}{r}\psi_{p,q}(\gamma)
=\dfrac{1}{{\rm gcd}(r,\psi_{p,q}(\gamma))}\psi_{p,q}(\gamma).\] 

Together with the results in Subsection \ref{subsec.variants}, we conclude the following. 

\begin{theorem} \label{thm.S^3} 
{\rm (1)} 
Let $\gamma = \smat{a&b\\c&d} \in \Gamma_{p,q}$ be a primitive hyperbolic element with $\tr \gamma  > 2$ and $c > 0$. 
Then the linking number of each modular knot $C_\gamma'$ in $S^3-K_{p,q}$ associated to $\gamma$ and the $(p,q)$-torus knot $K_{p,q}$ is given by 
\[ \mathrm{lk}(C_\gamma', K_{p,q}) = \frac{\,1\,}{\gcd(r, \psi_{p,q}(\gamma))} \psi_{p,q}(\gamma). \]
	
{\rm (2)} Let $\gamma \in \Gamma_{p,q}$ be any hyperbolic element and $\gamma_0\in \Gamma_{p,q}$ a primitive element with $\gamma=\pm \gamma_0^\nu$ for some $\nu\in \Z$. Then the linking number is given by 
\[ \mathrm{lk}(C_\gamma', K_{p,q}) = \frac{\,1\,}{\gcd(r, \Psi_{p,q}(\gamma_0))} \Psi_{p,q}(\gamma)
=\frac{\,1\,}{\gcd(r, \Psi_{p,q}^{\rm h}(\gamma_0))} \Psi_{p,q}^{\rm h}(\gamma). \]
\end{theorem}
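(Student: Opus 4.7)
The plan is to assemble the theorem as an almost immediate corollary of the three prior building blocks, namely Theorem~\ref{thm.lens} (the linking number in $L(r,p-1)$), Lemma~\ref{lem.link-lens-link-S3} (the behavior of linking numbers under the $\Z/r\Z$-cover $h$), and Proposition~\ref{prop.m_gamma} (the order of $[C_\gamma]$ in $H_1(L(r,p-1);\Z)$). Part~(1) is a one-line computation once these are combined, and part~(2) then follows by reducing the hyperbolic element $\gamma$ to its primitive generator $\gamma_0$ and invoking the properties of the class-invariant variants collected in Subsection~\ref{subsec.variants}.

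For part~(1), let $C_\gamma' \subset S^3-K_{p,q}$ be a modular knot associated to a primitive $\gamma$ with $\tr\gamma>2$ and $c>0$, and let $C_\gamma \subset L(r,p-1)-\overline{K}_{p,q}$ be its image. Proposition~\ref{prop.m_gamma} tells me that $[C_\gamma]$ has order $m_\gamma = r/\gcd(r,\psi_{p,q}(\gamma))$ in $H_1(L(r,p-1);\Z)\cong \Z/r\Z$. Lemma~\ref{lem.link-lens-link-S3} then converts the linking number downstairs into the linking number upstairs via the factor $m_\gamma$, and Theorem~\ref{thm.lens}(1) evaluates the former as $\frac{1}{r}\psi_{p,q}(\gamma)$. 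Multiplying,
\[
\mathrm{lk}(C_\gamma', K_{p,q}) \;=\; m_\gamma\cdot\frac{1}{r}\psi_{p,q}(\gamma) \;=\; \frac{\psi_{p,q}(\gamma)}{\gcd(r,\psi_{p,q}(\gamma))},
\]
which is the claimed identity.

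For part~(2), write $\gamma = \pm\gamma_0^\nu$ with $\gamma_0$ primitive; by Definition~\ref{def.modularknot'}(2) the primitive representative is chosen with $\tr\gamma_0 > 2$ and $c_0 > 0$, so part~(1) applies to $\gamma_0$. Since $C_\gamma' = \nu\, C_{\gamma_0}'$ as a knot with multiplicity, bilinearity of the linking number gives $\mathrm{lk}(C_\gamma',K_{p,q}) = \nu\,\mathrm{lk}(C_{\gamma_0}',K_{p,q})$. On $\gamma_0$, the values $\psi_{p,q}(\gamma_0)$, $\Psi_{p,q}(\gamma_0)$, and $\Psi_{p,q}^{\rm h}(\gamma_0)$ all coincide (by the discussion following Definition~\ref{def.original} and Proposition~\ref{prop.class-invariant}, since $\tr\gamma_0 > 2$). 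Finally, Proposition~\ref{prop.class-invariant} and Lemma~\ref{lem.-gamma} give $\nu\,\Psi_{p,q}(\gamma_0) = \Psi_{p,q}(\gamma_0^\nu) = \Psi_{p,q}(\pm\gamma_0^\nu) = \Psi_{p,q}(\gamma)$ and analogously for $\Psi_{p,q}^{\rm h}$, which lets me rewrite the formula in terms of the class-invariant symbols.

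There is essentially no hard step: every ingredient has been proved already. The only place where one must be slightly careful is in part~(2), where the $\pm$ sign and the power $\nu$ must be shuffled correctly; this is handled cleanly by Lemma~\ref{lem.-gamma} (invariance under $\gamma\mapsto -\gamma$) and by the multiplicativity $\Psi_{p,q}(\gamma_0^n)=n\Psi_{p,q}(\gamma_0)$ from Proposition~\ref{prop.class-invariant}, so the $\gcd$ factor is correctly governed by the \emph{primitive} element $\gamma_0$, not $\gamma$ itself.
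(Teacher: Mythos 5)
Your proposal is correct and follows essentially the same route as the paper: the paper likewise obtains part (1) by chaining Lemma \ref{lem.link-lens-link-S3}, Theorem \ref{thm.lens}, and Proposition \ref{prop.m_gamma} to get $\mathrm{lk}(C_\gamma',K_{p,q})=m_\gamma\,\mathrm{lk}(C_\gamma,\ol{K}_{p,q})=\psi_{p,q}(\gamma)/\gcd(r,\psi_{p,q}(\gamma))$, and then deduces part (2) from the class-invariance and multiplicativity results of Subsection \ref{subsec.variants}. Your extra care with the $\pm$ sign and the power $\nu$ via Lemma \ref{lem.-gamma} and Proposition \ref{prop.class-invariant} is exactly what the paper's brief ``together with the results in Subsection \ref{subsec.variants}'' is implicitly invoking.
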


\begin{remark}\label{rem:Tsanov}
In above, we proved the theorem in $S^3$ via the case in the lens space. 
We may also directly discuss the case in $S^3$ by using automorphic differential forms of degree $1/r$ studied by Milnor~\cite[Section 5]{Milnor1975Brieskorn}. 
Indeed, we can construct a lift $\widetilde{\Delta}_{p,q}^{\,1/r} : G_r \backslash \widetilde{\SL_2}\R \to \C^\times$ satisfying $(\widetilde{\Delta}_{p,q}^{\,1/r}(\gamma, n))^r = \widetilde{\Delta}_{p,q}(\gamma)$ for every $(\gamma, n) \in \widetilde{\SL_2}\R$. By a similar argument, 
we may obtain
\[ \mathrm{lk}(C_\gamma', K_{p,q}) = \mathrm{ind}(\widetilde{\Delta}_{p,q}^{\,1/r} (C_\gamma'), 0) = \frac{\,1\,}{\gcd(r, \psi_{p,q}(\gamma) 
)} \psi_{p,q}(\gamma)\]
for $\gamma$ with the condition of Theorem \ref{thm.S^3} (1). 
The lift $\widetilde{\Delta}_{p,q}^{\, 1/r}$ equals Tsanov's function $\omega_\infty(z,dz)$ in \cite[Lemma 4.16]{Tsanov2013EM} up to a constant multiple, yielding a homeomorphism $G_r \backslash \widetilde{\SL_2}\R \cong S^3 - K_{p,q}$ \cite[Section 5]{Tsanov2013EM}. 
\end{remark}

\subsection{Euler cocycles} \label{subsec.Euler} 
In this subsection, we further introduce another variant $\Psi_{p,q}^{\rm e}$ of the Rademacher symbol as well as define knots corresponding to elliptic and parabolic elements, so that the theorems on linking numbers extends to whole $\Gamma_{p,q}$. 
This symbol is characterized by using an Euler cocycle, which arises as an obstruction to the existence of sections of cycles in the $S^1$-bundle $T_1 \Gamma_{p,q}\backslash\bbH \cong \Gamma_{p,q}\backslash \SL_2\R \cong L(r,p-1)-\ol{K}_{p,q}$. 
Our argument partially justifies Ghys's outlined second proof \cite[Section 3.4]{Ghys2007ICM} of his theorem.

\subsubsection{The linking numbers of fibers} \label{subsubsec.fibers} 
The singular fibers of the $S^1$-bundle 
corresponding to the elliptic points $a=e^{\pi i(1-1/p)}$ and $b=e^{\pi i/q}$ are parametrized as 
\[{\bf f}_a(t)=\spmx{1&-\cos \frac{\pi}{p}\\0&1} 
\spmx{(\sin \frac{\pi}{p})^{1/2}&0\\0& (\sin\frac{\pi}{p})^{-1/2}}
\spmx{\cos t&-\sin t\\ \sin t &\cos t}, \ (0\leq t\leq \dfrac{\pi}{p}),\]
\[{\bf f}_b(t)=\spmx{1&\cos \frac{\pi}{q}\\0&1}
\spmx{(\sin \frac{\pi}{q})^{1/2}&0\\0& (\sin\frac{\pi}{q})^{-1/2}}
\spmx{\cos t&-\sin t\\ \sin t &\cos t}, \ (0\leq t\leq \dfrac{\pi}{q}).\] 
Indeed, they define closed curves 
by 
\[{\bf f}_a(\frac{\pi}{p})=\spmx{0&-1\\1&2\cos \frac{\pi}{p}}{\bf f}_a(0)=S_p{\bf f}_a(0), \ {\bf f}_b(\frac{\pi}{q})=\spmx{2\cos \frac{\pi}{q}&-1\\1&0}{\bf f}_b(0)=U_q{\bf f}_b(0).\] 
In addition, for any $t\in \R$, we have ${\bf f}_a(t)i=a$, ${\bf f}_b(t)i=b$. 
By 
\[\wt{\Delta}_{p,q}({\bf f}_a(t))=j({\bf f}_a(t),i)^{-2pq}\Delta_{p,q}(a)=(\sin \frac{\pi}{p})^{pq}e^{-2pqit}\Delta_{p,q}(a),\] 
the winding number of $\wt{\Delta}_{p,q}({\bf f}_a(t))$ $(0\leq t\leq \frac{\pi}{p})$ around the origin is $-q$. 
In a similar way, the winding number of $\wt{\Delta}_{p,q}({\bf f}_b(t))$ $(0\leq t\leq \frac{\pi}{q})$ is $-p$. 
Thus by Lemma \ref{psi-generators}, we see that 
\[{\rm lk}({\bf f}_a,\ol{K}_{p,q})=\dfrac{-q}{r}=\frac{\psi_{p,q}(S_p)}{r}, \ \ {\rm lk}({\bf f}_b,\ol{K}_{p,q})=\dfrac{-p}{r}=\frac{\psi_{p,q}(U_q)}{r},\] 
and Theorem \ref{thm.lens} (1) for the Rademacher symbol $\psi_{p,q}$ may (literally) extends to these curves.

On the other hand, for any non-elliptic point $z=x+iy \in \bbH$,  
the corresponding fiber (a generic fiber) in $L(r,p-1)-\ol{K}_{p,q}\cong \Gamma_{p,q}\backslash \SL_2\R$ is parametrized as
\[{\bf f}_z(t)=\spmx{1&x\\0&1} \spmx{y^{1/2}&0\\0&y^{-1/2}} \spmx{\cos t&-\sin t\\ \sin t&\cos t}, \ (0\leq t\leq \pi).\] 
Indeed, we have ${\bf f}_z(\pi)=-{\bf f}_z(0)={\bf f}_z(0)$ and ${\bf f}_z(t)i=z$. By 
\[\wt{\Delta}_{p,q}({\bf f}_z(t))=j({\bf f}_z(t),i)^{-2pq}\Delta_{p,q}(z)=e^{-2pqit}y^{pq}\Delta_{p,q}(z),\] 
the winding number of $\wt{\Delta}_{p,q}({\bf f}_z(t))$ $(0\leq t\leq \pi)$ around the origin is ${\rm ind}(\wt{\Delta}_{p,q}({\bf f}_z),0)=-pq$. Hence the linking number of a generic fiber is given by 
\[{\rm lk}({\bf f}_z,\ol{K}_{p,q})=\frac{-pq}{r}.\] 

\subsubsection{Knots for $S_p$, $U_q$, and $T_{p,q}$} 
In order to extend the the theorems on linking numbers to whole $\Gamma_{p,q}$, we define knots corresponding to elliptic and parabolic elements. 
Take a sufficiently small $\varepsilon \in \R_{>0}$. For the elliptic point $a=e^{\pi i(1-1/p)}$, we consider a circle 
\[\tilde{c}_a=\{z\in \bbH\mid d_{\rm hyp} (a,z)=\varepsilon\}\]
with a clockwise orientation, where $d_{\rm hyp}$ denotes the hyperbolic distance on $\bbH$. 
The elliptic element $S_p$ acts on $\tilde{c}_a$ as a rotation of angle $-2\pi/p$. 
Take any point $z_0\in \tilde{c}_a$ and let $\ol{s}_a$ denotes the circle segment connecting $z_0$ to $S_pz_0$. 
Then the image $c_a$ of $\ol{s}_a$ in $\Gamma_{p,q}\backslash \bbH$ is a simple closed curve. 
In addition, take any point $Z_0 \in \SL_2\R$ with $Z_0i=z_0$ and let $s_a$ denote the section of $\ol{s}_a$ connecting $Z_0$ to $S_pZ_0$. 
Then the image $C_a$ of $s_a$ in $\Gamma_{p,q}\backslash \SL_2\R\cong L(r,p-1)-\ol{K}_{p,q}$ is a simple closed curve 
satisfying $C_a i= c_a$. 
Since $C_a \to {\bf f}_a$ as $\varepsilon \to 0$, we have 
\[{\rm lk}(C_a, \ol{K}_{p,q})={\rm lk}({\bf f}_a,\ol{K}_{p,q})=\frac{-q}{r}.\] 
Similarly, for $b=e^{\pi i/q}$, we define simple closed curves $c_b$ and $C_b$ 
satisfying $C_bi=c_b$ and 
\[{\rm lk}(C_b,\ol{K}_{p,q})={\rm lk}({\bf f}_b, \ol{K}_{p,q})=\frac{-p}{r}.\]

For the parabolic element $T_{p,q}$, as in the proof of Proposition \ref{prop.tildeDelta} for $(p,q)=(2,3)$, we take a lift $C_y(t)$ $(0\leq t\leq \lambda=2(\cos \frac{\pi}{p} + \cos \frac{\pi}{q}))$ of a holocycle so that we have 
\[{\rm lk}(C_y, \ol{K}_{p,q})=1=\frac{\,r\,}{r}=\frac{\,1\,}{r}\psi_{p,q}(T_{p,q}).\] 

\subsubsection{Theorem on whole $\Gamma_{p,q}$} 
Note that the fundamental group of the orbifold $\Gamma_{p,q}\backslash \bbH$ is described by both the languages of loops and covering spaces (cf.\cite[Chapter 13]{Ratcliffe2019book}). 
For each $\gamma \in \Gamma_{p,q}$, let $w$ be a fixed point on $\bbH\cup \R \cup \{i\infty\}$ and consider the stabilizer $(\Gamma_{p,q})_w$. 
If $\gamma$ is hyperbolic or parabolic, then $(\Gamma_{p,q})_w \cong \Z\times \Z/2\Z$. 
If instead $\gamma$ is elliptic, then $(\Gamma_{p,q})_w$ is a finite cyclic group. 
Let $\tilde{c}$ be a curve in $\bbH$ which is stable under the action of $(\Gamma_{p,q})_w$ and let $c$ denote the image of $\tilde{c}$ in 
$\Gamma_{p,q}\backslash \bbH$. 
If $\gamma$ is elliptic, then $c$ is a cycle around a cone point. 
If $\gamma$ is parabolic, then $c$ is the image of a holocycle. 
If $\gamma$ is hyperbolic, then we further assume that $\tilde{c}$ is a geodesic. 
Such $c$ is freely homotopic to a generator of $\gamma$ in the sense of the orbifold fundamental group. 

We define the knot $C_\gamma$ as a section of such $c$. 
More precisely, in addition to Definition \ref{def.modularknot}, 
we define knots corresponding to elliptic and parabolic elements as follows. 

\begin{definition} 
We put $C_{S_p}=C_a$, $C_{U_q}=C_b$, and $C_{T_{p,q}}=C_y$ discussed in above. 
In addition, for any $g\in \Gamma_{p,q}$, we put $C_{\pm g^{-1}S_p^{\,n}g}=nC_{S_p}$ for $n=1,2,\cdots, p-1$ and 
$C_{\pm g^{-1}U_q^{\,n}g}=nC_{U_q}$ for $n=1,2,\cdots ,q-1$. 
For any $g\in \Gamma_{p,q}$ and $n\in \Z$, we put $C_{\pm g^{-1}T_{p,q}^{\,n}g}=nC_{T_{p,q}}$. 
\end{definition} 

\begin{definition} 
We define \emph{the modified Rademacher symbol} $\Psi_{p,q}^{\rm e}:\Gamma_{p,q}\to \Z$ by 
\begin{align*}
	\Psi_{p,q}^{\rm e} (\gamma) = \begin{cases}
			-nq &\text{if } \gamma \sim \pm S_p^{\,n}\ (1 \leq n \leq p-1),\\
			-np &\text{if } \gamma \sim \pm U_q^{\,n}\ (1 \leq n \leq q-1),\\
			\Psi_{p,q}(\gamma) = \Psi_{p,q}^\mathrm{h}(\gamma) &\text{if otherwise}.
		\end{cases}
\end{align*}
where $\sim$ denotes the group conjugate in $\Gamma_{p,q}$. 
\end{definition} 

We remark that $\Psi_{p,q}^{\rm e} (\gamma)=\psi_{p,q}(\gamma)$ holds 
if $\tr \gamma \geq 2$ or $\gamma =S_p^{\,n}$ ($1\leq n\leq p-1$) or $\gamma=U_q^{\, n}$ ($1\leq n\leq q-1$). 
By combining all above, 
we may conclude the following. 

\begin{theorem} \label{thm.wholeGamma} 
For any $\gamma \in \Gamma_{p,q}$, the linking number in $L(r,p-1)$ is given by 
\[{\rm lk}(C_\gamma,\ol{K}_{p,q})=\frac{\,1\,}{r}\Psi_{p,q}^{\rm e}(\gamma).\] 
In addition, 
suppose that $\gamma=\pm \gamma_0^{\,\nu}$ for a primitive non-elliptic element $\gamma_0\in \Gamma_{p,q}$ and $\nu \in \Z$ 
or $\gamma \sim \pm S_p^{\, n}$ $(1\leq n\leq p-1)$ or  $\gamma \sim \pm U_q^{\, n}$ $(1\leq n\leq q-1)$. 
If $C'_\gamma$ is a connected component of $h^{-1}(C_\gamma)$ in the sense of Definition \ref{def.modularknot'} (2), then the linking number in $S^3$ is given by 
\[{\rm lk}(C'_\gamma, K_{p,q})=\frac{\,1\,}{{\rm gcd}(r,\Psi_{p,q}^{\rm e}(\gamma_0))}\Psi_{p,q}^{\rm e}(\gamma).\] 
\end{theorem}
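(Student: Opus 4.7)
The plan is to split the theorem by the type of $\gamma$---hyperbolic, elliptic, or parabolic---and for each type assemble: (i) the relevant definition of $C_\gamma$ (and of $C'_\gamma$ in the $S^3$ case) from Subsection \ref{subsec.Euler}; (ii) the explicit linking numbers of fibers and their small-circle approximations computed in Subsection \ref{subsubsec.fibers}; (iii) the previously established Theorems \ref{thm.lens} and \ref{thm.S^3} for the hyperbolic case. Everything else is linearity of the linking number and the covering/degree lemmas from Subsection \ref{s4-1}.

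For Part~1, the hyperbolic case is literally Theorem \ref{thm.lens}. For $\gamma \sim \pm S_p^{\,n}$ with $1 \le n \le p-1$, the definition gives $C_\gamma = nC_{S_p}$. As $\varepsilon\to 0$ the curve $C_{S_p}$ stays inside $\Gamma_{p,q}\backslash\SL_2\R \smallsetminus \overline{K}_{p,q}$ and limits to the singular fiber ${\bf f}_a$, so Subsection \ref{subsubsec.fibers} yields ${\rm lk}(C_{S_p},\overline{K}_{p,q}) = {\rm lk}({\bf f}_a,\overline{K}_{p,q}) = -q/r$. By linearity, ${\rm lk}(C_\gamma,\overline{K}_{p,q}) = -nq/r = \Psi_{p,q}^{\rm e}(\gamma)/r$. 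The case $\gamma \sim \pm U_q^{\,n}$ is entirely parallel via ${\rm lk}(C_{U_q},\overline{K}_{p,q}) = -p/r$. For parabolic $\gamma \sim \pm T_{p,q}^{\,n}$, the definition gives $C_\gamma = nC_{T_{p,q}}$ and the computation ${\rm lk}(C_{T_{p,q}},\overline{K}_{p,q}) = 1 = r/r$ matches $\Psi_{p,q}^{\rm e}(T_{p,q})/r = r/r$, so linearity closes the case.

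For Part~2, the hyperbolic case is Theorem \ref{thm.S^3}. For $\gamma \sim \pm S_p^{\,n}$, take $\gamma_0 = S_p$ so that $\Psi_{p,q}^{\rm e}(\gamma_0) = -q$. From Part~1 the class of $C_{S_p}$ in $H_1(L(r,p-1);\Z)\cong\Z/r\Z$ is $-q \bmod r$, which has order $r/\gcd(r,q)$. Since $\gcd(r,q) = \gcd(pq-p-q,q) = \gcd(p,q) = 1$, this order is $r$, so by Lemma \ref{lem.covering} the preimage $h^{-1}(C_{S_p})$ is a single connected knot $C'_{S_p}$, and Lemma \ref{lem.link-lens-link-S3} gives ${\rm lk}(C'_{S_p},K_{p,q}) = r\cdot(-q/r) = -q$. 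By linearity ${\rm lk}(C'_\gamma,K_{p,q}) = -nq$, matching $\Psi_{p,q}^{\rm e}(\gamma)/\gcd(r,\Psi_{p,q}^{\rm e}(\gamma_0))$ because $\gcd(r,-q)=1$. The case $\gamma \sim \pm U_q^{\,n}$ is analogous using $\gcd(r,p) = \gcd(pq-p-q,p) = \gcd(q,p) = 1$.

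The main technical point I anticipate is verifying that the extended definition $C_{\pm g^{-1}S_p^{\,n}g} = nC_{S_p}$ is well-defined on conjugacy classes: different expressions of the same element as a conjugate of some $\pm S_p^{\,n}$ must give the same knot. This follows from the uniqueness of elliptic conjugacy classes in $\Gamma_{p,q}$, which in turn is a consequence of the amalgamated-product structure $\Gamma_{p,q}\cong \Z/2p\Z *_{\Z/2\Z}\Z/2q\Z$ and the local description of elliptic fixed points on $\bbH$. The other minor point is the continuity of the linking number as $\varepsilon\to 0$ in identifying $C_{S_p}$ with ${\bf f}_a$ for the purpose of linking with $\overline{K}_{p,q}$; this is immediate since the one-parameter family of curves remains in the open complement $\Gamma_{p,q}\backslash\SL_2\R \smallsetminus \overline{K}_{p,q}$ throughout the deformation, making the linking number a locally constant homotopy invariant.
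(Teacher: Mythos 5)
Your proposal is correct and follows essentially the same route as the paper, which likewise obtains the result by assembling Theorems \ref{thm.lens} and \ref{thm.S^3} for the non-elliptic hyperbolic case with the fiber computations of Subsection \ref{subsubsec.fibers}, the degenerations $C_a\to {\bf f}_a$, $C_b\to{\bf f}_b$, the holocycle computation for $C_{T_{p,q}}$, and Lemmas \ref{lem.covering}--\ref{lem.link-lens-link-S3}; the only case you leave implicit is the parabolic one in Part~2, which is settled by the same two covering lemmas since ${\rm lk}(C_{T_{p,q}},\ol{K}_{p,q})=1$ gives $[C_{T_{p,q}}]=0$ in $H_1(L(r,p-1);\Z)$, hence $m=1$, matching $\Psi_{p,q}^{\rm e}(T_{p,q}^{\,n})/\gcd(r,\Psi_{p,q}^{\rm e}(T_{p,q}))=nr/r=n$. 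Your added justifications --- well-definedness of $C_{\pm g^{-1}S_p^{\,n}g}$ via the conjugacy classification of elliptic elements in the amalgam, and homotopy invariance of the linking number during the $\varepsilon\to 0$ degeneration within the complement of $\ol{K}_{p,q}$ --- are correct and in fact slightly more careful than the paper, whose proof consists of the phrase ``by combining all above.''
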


\subsubsection{An Euler cocycle for $\Psi_{p,q}^{\rm e}$} 
Let ${\bf f}={\bf f}_z$ be a generic fiber given in Section \ref{subsubsec.fibers}. 
An Euler cocycle ${\rm eu}:\Gamma_{p,q}^{\,2}\to \Z$ of the $S^1$-bundle $T_1\Gamma_{p,q}\backslash \bbH \cong L(r,p-1)-\ol{K}_{p,q}$ is defined by the equality 
\[[C_{\gamma_1\gamma_2}]-[C_{\gamma_1}]-[C_{\gamma_2}]=-{\rm eu}(\gamma_1,\gamma_2)[{\bf f}]\] 
in $H_1(L(r,p-1)-\ol{K}_{p,q};\Z)$ for every $\gamma_1,\gamma_2\in \Gamma_{p,q}$. 
Taking the linking numbers with $\ol{K}_{p,q}$, we obtain 
\[{\rm lk}(C_{\gamma_1\gamma_2}, \ol{K}_{p,q}) - {\rm lk}(C_{\gamma_1}, \ol{K}_{p,q}) -{\rm lk}(C_{\gamma_2}, \ol{K}_{p,q})= -{\rm eu}(\gamma_1,\gamma_2) {\rm lk}({\bf f}, \ol{K}_{p,q}) = {\rm eu}(\gamma_1,\gamma_2) \frac{pq}{r}.\]

Note that we have $H^2(\Gamma_{p,q}/\{\pm I\};\Z)\cong \Z/pq\Z$ and $C_{\gamma}=C_{-\gamma}$ for any $\gamma \in \Gamma_{p,q}$. Let $\phi:\Gamma_{p,q} \to \Z$ be a unique function satisfying $-\delta \phi=pq {\rm eu}$ and $\phi(\gamma)=\phi(-\gamma)$ for any $\gamma \in \Gamma_{p,q}$. Then for any $\gamma \in \Gamma_{p,q}$, we have ${\rm lk}(C_\gamma, \ol{K}_{p,q})=\phi(\gamma)/r$. 
Together with the equality ${\rm lk}(C_\gamma, \ol{K}_{p,q})=\Psi_{p,q}^{\rm e}(\gamma)/r$ in Theorem \ref{thm.wholeGamma}, 
we obtain the following. 
\begin{theorem} \label{thm.Euler} 
Let ${\rm eu}:\Gamma_{p,q}^{\,2}\to \Z$ denote the Euler cocycle function defined as above. Then the modified Rademacher symbol $\Psi_{p,q}^{\rm e}$ is a unique function satisfying $-\delta \Psi_{p,q}^{\rm e}= pq {\rm eu}$ and $\Psi_{p,q}^{\rm e}(\gamma)=\Psi_{p,q}^{\rm e}(-\gamma)$ for any $\gamma\in \Gamma_{p,q}$. 
\end{theorem}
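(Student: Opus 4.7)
The plan is to verify the two declared properties of $\Psi_{p,q}^{\rm e}$ and then its uniqueness, by assembling Theorem \ref{thm.wholeGamma}, the definition of the Euler cocycle, and the structure of $\Gamma_{p,q}/\{\pm I\}$.

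First, I would check the coboundary identity $-\delta\Psi_{p,q}^{\rm e}=pq\,{\rm eu}$. Pairing the defining homology relation
$[C_{\gamma_1\gamma_2}]-[C_{\gamma_1}]-[C_{\gamma_2}]=-{\rm eu}(\gamma_1,\gamma_2)[{\bf f}]$
in $H_1(L(r,p-1)-\ol{K}_{p,q};\Z)$ against the linking pairing with $\ol{K}_{p,q}$, and substituting the computation ${\rm lk}({\bf f},\ol{K}_{p,q})=-pq/r$ together with ${\rm lk}(C_\gamma,\ol{K}_{p,q})=\Psi_{p,q}^{\rm e}(\gamma)/r$ from Theorem \ref{thm.wholeGamma}, one obtains the coboundary relation after clearing the factor $r$. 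This is essentially the computation already laid out in the paragraph preceding the theorem; my job is simply to record it as a formal verification.

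Second, the symmetry $\Psi_{p,q}^{\rm e}(-\gamma)=\Psi_{p,q}^{\rm e}(\gamma)$ I would check case by case according to the piecewise definition. When $\gamma\sim\pm S_p^{\,n}$ or $\gamma\sim\pm U_q^{\,n}$, replacing $\gamma$ by $-\gamma$ keeps it in the same conjugacy class up to sign, so the value is unchanged tautologically. For $\gamma$ in the remaining stratum, the symmetry reduces to Lemma \ref{lem.-gamma} for $\Psi_{p,q}$, and to the corresponding identity for $\Psi_{p,q}^{\rm h}$ recorded just after the definition of the homogeneous symbol.

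Third, for uniqueness, I would suppose $f_1,f_2:\Gamma_{p,q}\to\Z$ both satisfy the two conditions and set $g:=f_1-f_2$. Then $g$ is a group homomorphism to $\Z$ that satisfies $g(-\gamma)=g(\gamma)$, hence it descends to a homomorphism $\Gamma_{p,q}/\{\pm I\}\to\Z$. Since $\Gamma_{p,q}/\{\pm I\}$ is generated by the images of the torsion elements $S_p$ and $U_q$, it admits no nontrivial homomorphism to $\Z$, so $g=0$ and $f_1=f_2$. The core input of the proof is the geometric computation of ${\rm lk}({\bf f},\ol{K}_{p,q})$ together with Theorem \ref{thm.wholeGamma}, both already in hand; given these, the remaining argument is purely formal, and the only spot demanding minor care is the final step, where one must invoke the triviality of $\mathrm{Hom}(\Gamma_{p,q}/\{\pm I\},\Z)$ to conclude that the difference of two candidate functions vanishes.
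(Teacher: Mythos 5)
Your proposal is correct and follows essentially the same route as the paper: both derive the coboundary identity by pairing the defining homology relation for $\mathrm{eu}$ with the linking number against $\ol{K}_{p,q}$, using ${\rm lk}({\bf f},\ol{K}_{p,q})=-pq/r$ and Theorem \ref{thm.wholeGamma}, and both reduce uniqueness to the vanishing of homomorphisms from the torsion-generated group $\Gamma_{p,q}$ (equivalently $\Gamma_{p,q}/\{\pm I\}$) to $\Z$. The only cosmetic difference is that you verify the symmetry $\Psi_{p,q}^{\rm e}(-\gamma)=\Psi_{p,q}^{\rm e}(\gamma)$ case by case from the piecewise definition, whereas the paper deduces it from $C_{-\gamma}=C_\gamma$ together with the linking-number formula; both are valid.
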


\begin{remark}
We may replace $\Psi_{p,q}^{\rm e}$ and ${\rm eu}$ in Theorem \ref{thm.Euler} by $\psi_{p,q}$ and $W$ 
by modifying the definition of modular knots for $\gamma$'s which do not satisfy the condition of Theorem \ref{thm.lens} (1). 
In this case, the equalities $C_\gamma=C_{-\gamma}$ and $C_{\gamma^n}=nC_\gamma$ will be modified according to the formula $\psi_{p,q}(-\gamma) = \psi_{p,q}(\gamma) + pq \sgn(\gamma)$. 
\end{remark}

\begin{remark} 
Ghys claims in \cite[Section 3.4]{Ghys2007ICM} that if we adapt the definition of modular knots to parabolic and elliptic elements, then his theorem follows from results of Atiyah \cite{Atiyah1987MA} and Barge--Ghys \cite{BargeGhys1992}, 
which explicitly investigate Euler cocycles in a view of ${\rm Homeo}^+S^1$. 
If we directly extend the results of Atiyah and Barge--Ghys for $\Gamma_{p,q}$, 
then we may obtain alternative proofs of our theorems on the linking numbers. 
\end{remark} 

\section{Miscellaneous} \label{s5}


Finally, we give some remarks and 
further problems. 

\subsection{Templates and codings} \label{ss.templates}
Ghys gave three proofs for his theorem on the Rademacher symbol for $\SL_2\Z$ and the linking number around the trefoil. 
In this article, through Sections 2--4, we generalized his first proof in \cite[Section 3.3]{Ghys2007ICM} by introducing the cusp form $\Delta_{p,q}(z)$, as well as discussed an Euler cocycle in a view of his second outlined proof in  \cite[Section 3.4]{Ghys2007ICM}.

Ghys's third proof in \cite[Section 3.5]{Ghys2007ICM} is a dynamical approach. A Lorenz knot is a periodic orbit appearing in the Lorenz attractor. Ghys proved for $\SL_2\Z$ that isotopy classes of Lorenz knots and modular knots coincide. 
In addition, he gave an explicit formula for $\mathrm{lk}(C_\gamma, K_{2,3})$ by using the Lorenz template. 
A hyperbolic element $\gamma \in \SL_2 \Z$ is conjugate to a matrix of the form
\[ \gamma \sim \pm S_2 U_3^{\varepsilon_1} S_2 U_3^{\varepsilon_2} \cdots S_2 U_3^{\varepsilon_n} \]
with $\varepsilon_i \in \{+1, -1\}$. Then, the linking number counts the number of left and right codes on the Lorenz template, that is, 
\[ \mathrm{lk}(C_\gamma, K_{2,3}) = \sum_{i=1}^n \varepsilon_i. \] 
On the other hand, Rademacher showed in \cite[(70)]{RademacherGrosswald1972} that $\sum_{i=1}^n \varepsilon_i=\Psi_{2,3}(\gamma)$.
Thus we obtain $\mathrm{lk}(C_\gamma, K_{2,3}) = \Psi_{2,3}(\gamma)$. 

The templates for geodesic flows for triangle groups are studied by Dehornoy and Pinsky 
\cite{Pinsky2014ETDS, Dehornoy2015AGT, DehornoyPinsky2018ETDS}. 
In particular, Dehornoy \cite[Proposition 5.7]{Dehornoy2015AGT} gave an explicit formula for the linking number between a periodic orbit of the geodesic flow $\Phi_{\Gamma_{p,q} \backslash \bbH}$ and the $(p,q)$-torus knot $\overline{K}_{p,q}$. 
By combining their result and Theorem \ref{thm.lens}, we may obtain an explicit formula of the Rademacher symbol $\Psi_{p,q}(\gamma)$. 
On the other hand, if one can show the explicit formula of $\Psi_{p,q}(\gamma)$ directly from the definition, then we obtain a generalization of Ghys's third proof.

\subsection{Distributions}
It is a natural question to ask the relation between the linking number ${\rm lk}(C_\gamma,K_{p,q})$ of a modular knot and the length $\ell(C_\gamma)$ of the corresponding closed geodesic on the modular orbifold. 
Based on Sarnak's idea in his letter \cite{Sarnak2010CMA}, Mozzochi \cite{Mozzochi2013Israel} proved variants of prime geodesic theorems to establish the following distribution formula, invoking the Selberg trace formula for 
$\SL_2\Z$; 
\begin{proposition} \label{prop.SM}
Suppose that $\gamma$ runs through conjugacy classes of primitive hyperbolic elements in $\SL_2\Z$ with $\tr \gamma > 2$ and let $\ell(\gamma)=2\log \xi_\gamma$ denote the length of the image of each modular knot $C_\gamma$ in $\SL_2 \Z \backslash \bbH$. Then 
for each $-\infty \leq a \leq b \leq \infty$, we have 
\[
	\lim_{y\to \infty}\frac{\#\{ \gamma \mid \ell(C_\gamma) \leq y,\ a \leq \dfrac{\mathrm{lk}(C_\gamma, K_{2,3})}{\ell(C_\gamma)} \leq b\}}{\#\{\gamma \mid \ell(C_\gamma) \leq y\}} = \frac{\arctan \dfrac{\pi b}{3} - \arctan \dfrac{\pi a}{3}}{\pi}.
\]
\end{proposition}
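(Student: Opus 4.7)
The plan is to carry out the Sarnak--Mozzochi argument directly. By Ghys's theorem (the $(p,q)=(2,3)$ case of Theorem \ref{our-main-theorem1}), one has $\mathrm{lk}(C_\gamma, K_{2,3}) = \Psi_{2,3}(\gamma)$ for each primitive hyperbolic $\gamma$ with $\tr\gamma > 2$ and $c > 0$, so the problem reduces to the distribution of $\Psi_{2,3}(\gamma)/\ell(\gamma)$ over primitive hyperbolic conjugacy classes ordered by length. The target is a Cauchy distribution with scale parameter $3/\pi = 1/\vol(\SL_2\Z \backslash \bbH)$, whose CDF is $\tfrac{1}{\pi}\arctan(\pi x/3) + \tfrac{1}{2}$, matching the right-hand side.

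By L\'evy's continuity theorem, it suffices to establish, for each fixed $t \in \R$, that
\[ \chi_y(t) := \frac{1}{\#\{\gamma : \ell(\gamma) \le y\}} \sum_{\ell(\gamma)\le y} \exp\!\Bigl(\tfrac{i t\,\Psi_{2,3}(\gamma)}{\ell(\gamma)}\Bigr) \;\longrightarrow\; e^{-3|t|/\pi} \]
as $y \to \infty$. The denominator is handled by the prime geodesic theorem for $\SL_2\Z$, which gives $\sim e^y/y$. For the numerator, a dyadic decomposition in $\ell(\gamma)$ allows one to replace $1/\ell(\gamma)$ by an approximate constant $\approx 1/y_0$ on each dyadic range $\ell(\gamma)\sim y_0$, at the cost of an error that is controlled by partial summation; this reduces the question to twisted prime geodesic sums $N(s,y) := \sum_{\ell(\gamma)\le y} e^{is\Psi_{2,3}(\gamma)}$ with $s = t/y$.

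The twisted sums are studied through the Selberg trace formula. By Theorem \ref{thm.cycleintegral}, the phase $\Psi_{2,3}(\gamma) = r\int_{\overline{S}_\gamma} E_2^{(2,3),\ast}(z)\,dz$ is a period of the harmonic Maass form $E_2^{(2,3),\ast}$, so $\gamma \mapsto e^{is\Psi_{2,3}(\gamma)}$ is a unitary character on the space of closed geodesics, realised as the holonomy of a flat $U(1)$-bundle over $\SL_2\Z\backslash \bbH$. Inserting this character into the trace formula produces a family of twisted Selberg zeta functions whose leading singularity $\lambda_0(s)$ dictates the exponential growth of $N(s,y)$ via a Tauberian argument. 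An asymptotic of the form $\lambda_0(s) = 1 - (3/\pi)|s| + o(|s|)$ as $s \to 0$---with the coefficient $3/\pi = 1/\vol(\SL_2\Z \backslash \bbH)$ emerging from a Kronecker-type residue computation tied to Proposition \ref{prop.Kronecker} for the Eisenstein series whose limit function is the primitive of $E_2^{(2,3),\ast}$---combines with the substitution $s = t/y$ to yield $\chi_y(t) \to e^{-3|t|/\pi}$, after which Fourier inversion produces the stated CDF.

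The main obstacle is to establish this twisted prime geodesic theorem with uniformity in $s$ on compact subsets of $\R$ and with error terms strong enough to justify the Fourier inversion. The Cauchy profile---signaled by the \emph{linear} dependence $(3/\pi)|s|$ in $\lambda_0(s)-1$ rather than a quadratic $cs^2$---reflects the failure of $\Psi_{2,3}$ to be a genuine homomorphism (quantified by the bounded $2$-cocycle $W$ of Theorem \ref{thm.W}) together with the divergence of the naive second moment of $\Psi_{2,3}$, which comes from excursions of typical closed geodesics into the cusp. Mozzochi \cite{Mozzochi2013Israel} carries out this analysis using refined trace-formula estimates following Sarnak \cite{Sarnak2010CMA}; the remaining steps---Tauberian passage from the twisted zeta function to counting asymptotics, and routine Fourier inversion---complete the proof.
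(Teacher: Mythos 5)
This proposition is not proved in the paper at all: it is quoted as a known theorem of Mozzochi, following Sarnak's letter, and the text simply records the statement with a citation to \cite{Sarnak2010CMA} and \cite{Mozzochi2013Israel}. Your proposal is therefore not being measured against an in-paper argument; it is an outline of the strategy of the cited proof, and you yourself defer the substantive analytic content (the twisted prime geodesic theorem with uniformity in the twist parameter, and the Tauberian passage) to Mozzochi. As a citation-plus-sketch this is at the same level of rigor as the paper's own treatment, and the main landmarks you identify are the right ones: reduction via Ghys's theorem to the statistics of $\Psi_{2,3}(\gamma)/\ell(\gamma)$, the method of characteristic functions, the identification of the Cauchy scale $3/\pi = 1/\vol(\SL_2\Z\backslash\bbH)$, and the crucial fact that the leading spectral parameter moves \emph{linearly} in $|s|$, which is what forces a Cauchy rather than Gaussian limit.

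One conceptual point in your sketch is off and worth flagging. You describe $\gamma\mapsto e^{is\Psi_{2,3}(\gamma)}$ as ``a unitary character\dots realised as the holonomy of a flat $U(1)$-bundle.'' It is not a character: $\Psi_{2,3}$ is only a quasimorphism, precisely because of the bounded $2$-cocycle $W$ of Theorem \ref{thm.W}, as you note two sentences later. The object that actually enters the Selberg trace formula in the Sarnak--Mozzochi analysis is the weight-$s$ \emph{multiplier system} $\nu_s(\gamma)=e^{2\pi i s\,\psi_{2,3}(\gamma)/12}$ attached to $\Delta_{2,3}^{s/12}=\eta^{2s}$, and the linear term $|s|/2$ in the bottom spectral parameter comes from the discrete eigenvalue $\tfrac{|s|}{2}\bigl(1-\tfrac{|s|}{2}\bigr)$ contributed by the holomorphic form $\eta^{2s}$ itself, not from a residue computation for the weight-$0$ Eisenstein series of Proposition \ref{prop.Kronecker}. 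So the mechanism you gesture at in the third paragraph is misattributed, even though the resulting asymptotics and the final constant are correct. If you intend this as a genuine proof rather than a pointer to the literature, that is the step you would need to set up correctly before any of the uniformity or Tauberian issues arise.
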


Von Essen generalized their results in his PhD thesis~\cite{vonEssenPhD} for any cofinite Fuchsian group with a multiplier system;
Let $\Gamma < \SL_2\R$ be a cofinite Fuchsian group, let $f: \bbH \to \C$ be a holomorphic modular form of weight $1$ for $\Gamma$ with no zero on $\bbH$, and let $\nu: \Gamma \to \C$ be a multiplier system, namely, we have 
\[
	f(\gamma z) = \nu(\gamma) j(\gamma, z) f(z)
\]
for every $\gamma \in \Gamma$. For its harmonic logarithm $F(z) = \log f(z)$, define $\Phi: \Gamma \to \C$ by
\[
	F(\gamma z) - F(z) = \log j(\gamma, z) + 2\pi i \Phi(\gamma).
\]
Assume in addition that the image of $\Phi$ is contained in $\Q$. 
By invoking the Selberg trace formula for Fuchsian groups, von Essen gave generalizations of Sarnak--Mozzochi's results. 
For instance, his Theorem H implies the following. 
\begin{proposition} \label{prop.vE}
If we replace $\SL_2\Z$ by $\Gamma$ in Proposition \ref{prop.SM}, then we have 
\[
	\lim_{y \to \infty} \frac{\#\{\gamma \mid \ell(\gamma) \leq y, a \leq \dfrac{\Phi(\gamma)}{\ell(\gamma)} \leq b\}}{\#\{\gamma \mid \ell(\gamma) \leq y\}} = \frac{\arctan 4\pi b - \arctan 4\pi a}{\pi}.
\]
\end{proposition}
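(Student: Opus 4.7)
The plan is to derive the proposition as a direct application of von Essen's Theorem H in \cite{vonEssenPhD}, whose hypotheses coincide verbatim with those of the proposition: a cofinite Fuchsian group $\Gamma<\SL_2\R$, a nowhere-vanishing weight-$1$ holomorphic modular form $f$ on $\bbH$, a multiplier system $\nu$, and the induced function $\Phi:\Gamma\to\Q$ extracted from the harmonic logarithm $F=\log f$. First I would align the frameworks: combining the multiplier relation $f(\gamma z)=\nu(\gamma)j(\gamma,z)f(z)$ with the transformation law $F(\gamma z)-F(z)=\log j(\gamma,z)+2\pi i\Phi(\gamma)$ gives $\nu(\gamma)=e^{2\pi i\Phi(\gamma)}$, so the phase information carried by $\Phi$ is exactly the data on which Theorem H operates.

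The mechanism underlying Theorem H, which I would follow, is spectral. Von Essen applies the Selberg trace formula for $\Gamma$ twisted by the unitary character $\gamma\mapsto e^{2\pi is\Phi(\gamma)}$ for a real parameter $s$, producing fine asymptotics for the twisted counting function
\[
\Lambda(y,s)=\sum_{\ell(\gamma)\leq y} e^{2\pi is\Phi(\gamma)}
\]
over primitive hyperbolic conjugacy classes. The limiting distribution of $\Phi(\gamma)/\ell(\gamma)$ is then extracted from the behavior of $\Lambda(y,s)$ as $s$ varies in a neighborhood of $0$: in the appropriate rescaling regime, a resolvent calculation for the twisted Laplacian produces a Cauchy characteristic function rather than a Gaussian one, reflecting the fact that the natural fluctuation scale of $\Phi(\gamma)$ is $\ell(\gamma)$ itself rather than $\sqrt{\ell(\gamma)}$. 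Fourier inversion then converts this characteristic function into the arctangent distribution function on $\R$.

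To pin down the explicit constant $4\pi$ in the statement, I would test against Sarnak--Mozzochi's Proposition \ref{prop.SM}: specializing to $\Gamma=\SL_2\Z$ with $f=\eta^2$, equivalently $f=\Delta_{2,3}^{1/12}$, gives $\Phi=\Psi_{2,3}/12$, so that $\arctan 4\pi\Phi/\ell=\arctan\pi\Psi_{2,3}/(3\ell)$, in exact agreement with the Sarnak--Mozzochi normalization for the modular curve. This consistency check fixes both the scale parameter $1/(4\pi)$ and the overall normalization of the limiting measure.

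The main obstacle in implementing this plan from scratch is the spectral analysis of the twisted Selberg zeta function uniformly in the multiplier parameter $s$, required for a Wiener--Ikehara-type extraction of the asymptotics of $\Lambda(y,s)$; this is precisely what distinguishes the Cauchy limit from a naive Gaussian central limit theorem. Von Essen handles this by a careful deformation of Selberg's trace formula, exploiting that the rationality hypothesis $\mathrm{Image}\,\Phi\subset\Q$ forces $e^{2\pi is\Phi}$ to be a unitary character of finite order for rational $s$, placing one in the setting of the classical twisted spectral theory. Because \cite{vonEssenPhD} carries out this apparatus in full generality, the present proposition follows by invocation once the alignment and normalization above are in place.
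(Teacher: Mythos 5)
Your proposal takes the same route as the paper: the statement is obtained by direct invocation of von Essen's Theorem H in \cite{vonEssenPhD}, whose hypotheses (cofinite $\Gamma$, nowhere-vanishing weight-$1$ form $f$ with multiplier $\nu(\gamma)=e^{2\pi i\Phi(\gamma)}$, $\Phi$ rational-valued) match the setup verbatim, and the paper likewise gives no independent argument beyond this citation. Your consistency check against Proposition \ref{prop.SM} via $\Phi=\Psi_{2,3}/12$, which correctly recovers $\arctan(\pi b/3)$ from $\arctan(4\pi b)$, is a sensible addition but not part of the paper's (essentially empty) proof.
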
 
We remark that von Essen also showed for the Hecke triangle group $H_n = \Gamma_{2,n}$ a formula which is essentially the same as in our Theorem \ref{thm.lens} (1). 
His construction of the cusp form $\Delta_{2,n}(z)$ differs from ours but 
is closely related to Tsanov's construction of $\omega_\infty(z,dz)$ explained in Remark \ref{rem:Tsanov}. 

His results and Proposition \ref{prop.vE} are applicable to our setting with a more general triangle group $\Gamma_{p,q}$. In fact, 
%
let $f(z) = \Delta_{p,q}(z)^{1/2pq} = \exp \frac{1}{2pq} F_{p,q}(z)$ and $F(z) = \frac{1}{2pq} F_{p,q}(z)$. By Definition \ref{def.psi}, we have
\[
	f(\gamma z) = \nu(\gamma) j(\gamma, z) f(z), \quad \nu(\gamma) = e^{2\pi i \frac{\psi_{p,q}(\gamma)}{2pq}},
\]
and $\Phi(\gamma) = \frac{1}{2pq} \psi_{p,q}(\gamma)$. 
Thus, we obtain the following. 
\begin{corollary} If we replace $\SL_2\Z$ by $\Gamma_{p,q}$ in Proposition \ref{prop.SM}, then we have 
\[
	\lim_{y \to \infty} \frac{\#\{\gamma \mid \ell(\gamma) \leq y, a \leq \dfrac{\psi_{p,q}(\gamma)}{\ell(\gamma)} \leq b\}}{\#\{\gamma \mid \ell(\gamma) \leq y\}} = \frac{\arctan \dfrac{2\pi b}{pq} - \arctan \dfrac{2\pi a}{pq}}{\pi}.
\]
\end{corollary}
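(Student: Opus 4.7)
The plan is to derive the corollary directly from von Essen's formula (Proposition \ref{prop.vE}) by feeding it the data attached to the cusp form $\Delta_{p,q}(z)$ constructed in Subsection \ref{s2-3}. The paragraph preceding the corollary already supplies the candidate input: take $f(z) = \Delta_{p,q}(z)^{1/(2pq)}$, its harmonic logarithm $F(z) = F_{p,q}(z)/(2pq)$, and the induced multiplier $\nu(\gamma) = \exp(2\pi i\,\psi_{p,q}(\gamma)/(2pq))$, so that von Essen's $\Phi$ becomes $\Phi(\gamma) = \psi_{p,q}(\gamma)/(2pq)$.

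First, I would verify that this choice meets every hypothesis of Proposition \ref{prop.vE}. The function $\Delta_{p,q}$ is holomorphic on $\bbH$ and has no zero there, so the fractional power $f = \exp(F_{p,q}/(2pq))$ is a well-defined, nowhere-vanishing holomorphic function on $\bbH$. The transformation law in Definition \ref{def.psi} gives $F_{p,q}(\gamma z) - F_{p,q}(z) = 2pq \log j(\gamma,z) + 2\pi i\,\psi_{p,q}(\gamma)$, from which one reads off that $f$ is a weight-$1$ holomorphic modular form for $\Gamma_{p,q}$ with multiplier system $\nu$. That $\Phi(\Gamma_{p,q}) \subset \Q$ is immediate from Proposition \ref{prop.psi-integer-value}, and $\Gamma_{p,q}$ is a cofinite Fuchsian group of the first kind; thus all assumptions of Proposition \ref{prop.vE} are satisfied.

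Second, I would apply the conclusion of Proposition \ref{prop.vE} to this $\Phi$ and rescale the parameters. Writing the target condition $a \le \psi_{p,q}(\gamma)/\ell(\gamma) \le b$ as $a/(2pq) \le \Phi(\gamma)/\ell(\gamma) \le b/(2pq)$ and substituting $(a,b) \mapsto (a/(2pq), b/(2pq))$ into von Essen's limiting density $(\arctan 4\pi b - \arctan 4\pi a)/\pi$ yields $(\arctan(2\pi b/pq) - \arctan(2\pi a/pq))/\pi$, which is exactly the right-hand side of the corollary.

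The plan is essentially mechanical once $\Delta_{p,q}$ and $\psi_{p,q}$ are in hand. The only mildly delicate point I anticipate is matching normalizations: one must check that the weight-$1$ form $f$ and the multiplier $\nu$ agree with von Essen's precise conventions for the input into his Selberg-trace-formula computation, so that his theorem may be quoted as a black box. After that bookkeeping, the corollary drops out by the rescaling above.
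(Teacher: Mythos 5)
Your proposal is correct and follows exactly the paper's own route: the paragraph preceding the corollary already sets $f=\Delta_{p,q}^{1/2pq}$, $\Phi=\psi_{p,q}/(2pq)$, and the statement then follows from Proposition \ref{prop.vE} by the substitution $(a,b)\mapsto(a/(2pq),b/(2pq))$, since $\arctan(4\pi b/(2pq))=\arctan(2\pi b/(pq))$. Your added verification of von Essen's hypotheses (nonvanishing, weight $1$, rationality of $\Phi$, cofiniteness) is a reasonable bit of extra bookkeeping that the paper leaves implicit.
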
 
By our Theorem \ref{thm.lens}, we may replace $\psi_{p,q}(\gamma)$ by $r {\rm lk}(C_\gamma,\ol{K}_{p,q})$ to obtain the Sarnak--Mozzochi formula for $\Gamma_{p,q}$. 


\begin{remark} 
The set of modular knots around the trefoil satisfies another distribution formula called \emph{the Chebotarev law} in the sense of Mazur \cite{Mazur2012} and McMullen \cite{McMullen2013CM}, so that it may be seen as an analogue of the set of all prime numbers in ${\rm Spec}\Z$ \cite{Ueki7, Ueki9}, in a sense of arithmetic topology \cite{Morishita2012}. 
An exploration of a unified viewpoint for these formulas would be of further interest.
\end{remark}



\subsection{Further problems} 

\subsubsection{Hyperbolic analogue} 
Duke--Imamo\={g}lu--T\'{o}th \cite{DukeImamogluToth2017Duke} investigated the linking number of two modular knots for $\SL_2\Z$. 
More precisely, they introduced a hyperbolic analogue of the Rademacher symbol $\Psi_\gamma(\sigma)$ for two hyperbolic elements $\gamma, \sigma \in \SL_2 \Z$ by using rational period functions, and established the equation $\Psi_\gamma(\sigma) = \mathrm{lk}(C_\gamma^+ + C_\gamma^-, C_\sigma^+ + C_\sigma^-)$. 
Here $C_\gamma^+$ is the modular knot as before, and $C_\gamma^-$ is another knot such that $C_\gamma^+ + C_\gamma^-$ is null-homologous in $S^3 - K_{2,3}$. 
Furthermore, the first author \cite{Matsusaka2020-arXiv} gave an explicit formula for the hyperbolic Rademacher symbol $\Psi_\gamma(\sigma)$ in terms of the coefficients of the continued fraction expansion of the fixed points of $\gamma$ and $\sigma$. 
An open question for $\SL_2\Z$ is to find a modular object yielding the linking number $\mathrm{lk}(C_\gamma, C_\sigma)$ (see also \cite{Rickards2021NT}). 
We may expect similar results for general triangle groups $\Gamma(p,q,r)$. 

\subsubsection{Other characterizations} 
In \cite[Theorem 5.60]{Atiyah1987MA}, Atiyah gave seven different definitions of the Rademacher symbol for hyperbolic elements of $\SL_2\Z$ (see also \cite{BargeGhys1992}). 
It would be interesting to extend any of them for $\Gamma_{p,q}$. 

\subsubsection{Galois actions} 
Since torus knots are algebraic knots, we have a natural action of the absolute Galois group on the profinite completions of the knot groups. 
We wonder if we may, in a sense, parametrize the Galois action via modular knots. 

\subsection*{Acknowledgments} 
The authors would like to express their sincere gratitude to Masanobu Kaneko for
his introduction to Asai's work in a private seminar and to Masanori Morishita for posing an interesting question related to Ghys's work. 
The authors are also grateful to Pierre Dehornoy, Kazuhiro Ichihara, \"{O}zlem Imamo\={g}lu, Morimichi Kawasaki, Ulf K\"{u}hn, Shuhei Maruyama, Makoto Sakuma, Yuji Terashima, and Masahito Yamazaki for useful information and fruitful conversations. 
Furthermore, the authors would like to thank all the participants who joined the online seminar FTTZS throughout the COVID-19 situation for cheerful communication. 
The first and the second authors have been partially supported by JSPS KAKENHI Grant Number JP20K14292 and JP19K14538 respectively. 

\bibliographystyle{amsalpha}
\bibliography{MatsusakaUeki-arXiv.bbl} 

\end{document}